\documentclass[reqno]{amsart}
\usepackage{amssymb}
\usepackage{amsmath}
\usepackage{amsthm}
\usepackage[usenames]{color}
\usepackage{graphicx}
\usepackage{cite}
\usepackage{bbm}

\DeclareMathOperator{\dv}{div\,}

\usepackage[colorlinks,linkcolor=black,anchorcolor=black,citecolor=black]{hyperref}
\usepackage[margin=1in]{geometry} 
\usepackage{marginnote}

\usepackage{color}

\newtheorem{thm}{Theorem}[section]

\newtheorem{lem}[thm]{Lemma}
\newtheorem{prop}[thm]{Proposition}
\newtheorem{rem}{Remark}[section]

\numberwithin{equation}{section}

\newcommand{\eps}{\epsilon}

\newcommand{\vertiii}[1]{{\left\vert\kern-0.25ex\left\vert\kern-0.25ex\left\vert #1
		\right\vert\kern-0.25ex\right\vert\kern-0.25ex\right\vert}}

\makeatletter
\@namedef{subjclassname@2020}{%
	\textup{2020} Mathematics Subject Classification}
\makeatother

\begin{document}

\title[The compressible Navier--Stokes equations] 
{High Mach number limit of the compressible Navier--Stokes equations in critical Besov spaces}

\author[J. Ni] {Jinkai Ni}
\address[JKN]{School of Mathematics, Nanjing University, Nanjing
 210093, P. R. China}
\email{jinkaini123@gmail.com}

\author[Z. Zhang]{Zhipeng Zhang} 
\address[ZPZ]{School of Mathematical Sciences, Ocean University of China, Qingdao   266100, P. R. China}  
\email{zhangzp@ouc.edu.cn}

\begin{abstract}
We investigate the high Mach number limit for the scaled compressible Navier--Stokes system in the critical  Besov framework. In the scaled momentum equation, the pressure force is represented by the term \(\varepsilon^2\nabla a^\varepsilon\), where $\varepsilon$ is the inverse Mach number;
as \(\varepsilon\to0\), the formal limiting system is the compressible pressureless Navier--Stokes system. The analysis is complicated by the absence of density dissipation in the limiting model and by the highest-order coupling created by the viscous terms. For \(d\geq2\), we prove the global well-posedness of the scaled system for small initial data and obtain estimates that are uniform with respect to $\varepsilon$. A crucial ingredient is a parameter-dependent lower-order
estimate for \(\varepsilon a^\varepsilon\), which compensates for the purely transport nature of the density equation and allows the uniform bounds to be closed. Based on these estimates, we justify the high Mach number limit and recover a global strong solution to the pressureless Navier--Stokes system.
For \(d\geq3\), we further derive quantitative error estimates between the scaled solutions and the pressureless limiting solution. More precisely, on each fixed finite time interval, if the initial discrepancy is of order 
\(\mathcal{O}(\varepsilon)\),
then the corresponding lower-order critical Besov error satisfies the same   rate, which yields a quantitative justification of the pressureless limit.
\end{abstract}

\date{\today}
	
\subjclass[2020]{35B35, 35Q35,  76N10}

\keywords{High Mach number limit; Compressible Navier--Stokes system;  global well-posedness; Critical Besov spaces.}
\maketitle
\thispagestyle{empty}

\section{Introduction}
\subsection{Models and related works}
In this paper, we focus on the study of the global well-posedness and convergence of the solutions to the isentropic compressible Navier--Stokes equations in the limit as the Mach number tends to infinity.
The system under consideration reads 
\begin{equation}\label{1.1a}
\left\{
\begin{aligned}
&\partial_t\rho^\varepsilon+\dv (\rho^\varepsilon u^\varepsilon)=0,\\
&\partial_t (\rho^\varepsilon u^\varepsilon)
+\dv(\rho^\varepsilon u^\varepsilon
\otimes u^\varepsilon)
+\varepsilon^2\nabla P(\rho^\varepsilon)-\mu \Delta u^\varepsilon-(\mu+\lambda) \nabla\dv u^\varepsilon=0
\end{aligned}
\right.
\end{equation}
with the initial data
\begin{align}
{(\rho^\varepsilon,u^\varepsilon )|}_{t=0}=(\rho^\varepsilon_0, u^\varepsilon_0),\nonumber
\end{align}
where $\rho^\varepsilon(t,x)\in\mathbb{R}_+$ denotes the density, $u^\varepsilon(t,x)\in\mathbb{R}^d$ the velocity, and $P(\rho^\varepsilon)=K(\rho^\varepsilon)^\gamma$ ($K > 0$, $\gamma > 1$) the pressure.   
The viscosity coefficients $\mu$ and $\lambda$ are constants satisfying the physical constraints:
\begin{align*}
\mu>0,\qquad 2\mu + d\lambda\geq 0,   
\end{align*}
where $d$ is the spatial dimension. The parameter $\varepsilon$ is the inverse of the Mach number defined as the ratio of the speed of flows to that of sound. We restrict our attention to the case where the fluid domain is the whole space $\mathbb{R}^d$.

Over the past few decades, a substantial number of papers have focused on the well-posedness and low Mach number limit  of the system \eqref{1.1a}. This research area traces back to the works of Nash \cite{N} and Serrin \cite{S}. Assuming that the density is strictly positive, they respectively established the local existence and uniqueness of classical solutions.
The first result regarding global classical solutions was achieved by Matsumura--Nishida \cite{MN-JMKU-1980} for initial data close to a non-vacuum equilibrium in $H^3$. Subsequently, Hoff \cite{Hoff-1,Hoff-2} demonstrated the global existence of weak solutions for discontinuous initial data with small energy.
In the presence of vacuum, owing to the degeneracy of the momentum equation, the problem becomes significantly more challenging. 
The global existence of weak solutions with large initial data in $\mathbb{R}^d$ was first obtained by Lions \cite{Lions} for $\gamma \geq \frac{3d}{d+2}$ with $d=2,3$. 
Later, Feireisl, Novotn\'y and Petzeltov\'a in \cite{FNP} extended the result of Lions to the case $\gamma > \frac{3}{2}$ for $d = 3$. Jiang and Zhang in \cite{JZ-1,JZ-2} proved the global existence of weak solutions with vacuum for any $\gamma>1$ for spherically symmetric or axisymmetric initial data. We specifically note that, except for a special case in \cite{Hoff-3}, the uniqueness and regularity of Lions--Feireisl's weak solutions remain open problems.

A natural approach to address the problem of uniqueness is to identify a functional setting that is as extensive as possible, within which both existence and uniqueness are guaranteed.
Such an approach was initially proposed by Fujita and Kato’s work \cite{FK-1964-ARMA}, where the key concept is that optimal function spaces for the well-posedness of incompressible Navier--Stokes equations exhibit scaling invariance. 
Therefore, in recent years, scaling invariance has played  a crucial role in the theory of global well-posedness. 
Danchin \cite{Danchin-IM-00} demonstrated the global well-posedness of \eqref{1.1a} in the $L^2$-type critical Besov space. Later, this result was extended to  the case of the general $L^p$-type critical Besov spaces by
Charve and Danchin \cite{CD-2010-ARMA}, Chen et al. \cite{CMZ-2010-CPAM} and Haspot \cite{Haspot-2011-ARMA}. 
For the related topics on strong solutions with the critical regularity, Danchin \cite{Danchin-02} showed  that the small solution  converges to the incompressible flow as Mach number tends to zero in the critical $L^2$-type Besov space.  
Danchin and He \cite{DH} proved the same results in the   critical $L^p$-type $(2 \leq p <4)$ framework  for the high-frequency component. Recently, Fujii \cite{F-24} proved that a   unique global solution exists for arbitrarily large initial perturbation in the critical $L^2$-type Besov space,  provided that the Mach number is sufficiently small, and further showed that the solution converges to the corresponding incompressible flow in some scaling critical norms. Interested readers may refer to \cite{Danchin-CPDE-2007,DX-ARMA-2017,Xj-CMP-2019,X-X-2021-JDE,Haspot-2011-ARMA} for the large-time behavior of global solutions.

Formally, letting $\varepsilon\rightarrow0$ in \eqref{1.1a} and assuming that  the limits $\rho^\varepsilon\rightarrow \rho$ and $u^\varepsilon\rightarrow u$ exist, the pressure term in the momentum equation \eqref{1.1a}$_2$ vanishes and we derive the following pressureless Navier--Stokes system
\begin{equation}\label{1.5}
\left\{
\begin{aligned}
&\partial_t\rho+\dv (\rho u)=0,\\
&\partial_t (\rho u)
+\dv(\rho u
\otimes u)
-\mu \Delta u-(\mu+\lambda) \nabla\dv u=0
\end{aligned}
\right.
\end{equation}
with the initial data
\begin{align}\label{1.5.1}
{(\rho, u)|}_{t=0}=(\rho _0,u_0).
\end{align}
The above system arises in the description of astrophysical phenomena. Recently, significant progress has been made on the pressureless Navier--Stokes system \eqref{1.5}. Danchin \cite{Dr-2024-PMP} studied two-dimensional viscous pressureless flows with large density variations. Guo et al. \cite{GTZ-JDE-2025} proved the global well-posedness and stability of classical solutions under smallness assumptions in Sobolev spaces. Specifically, they derived the optimal decay rate of the velocity $u$, i.e., $\|u(t)\|_{L^2}\lesssim (1+t)^{-\frac{3}{4}}$, while the density remains uniformly bounded rather than decaying. Wang et al. \cite{WWX-2025-arXiv} considered Fujita-Kato type solutions in $\mathbb{R}^3$ with only bounded density. More recently, Li and the first and second authors of this paper established global well-posedness, uniform stability, and optimal time-decay rates in the critical Besov space $\dot B^{\frac{d}{2}}_{2,1}\times \dot B^{\frac{d}{2}-1}_{2,1}$ in  \cite{LNZ-2026-arxiv}. These results clarify the role of the density in pressureless dynamics: the density is propagated uniformly in time but does not satisfy decay estimates comparable to those of the velocity, which stands in sharp contrast with the classical isentropic compressible Navier--Stokes system \cite{Danchin-IM-00}. However,  
to the best of our knowledge, there is currently no rigorous justification for the limiting process as $\varepsilon\rightarrow 0$, i.e., high Mach number limit.

The purpose of this paper is to provide a rigorous justification for the high Mach number limit.
We focus on the case $\gamma=2$. More precisely, we take
$P(\rho)=K\rho^2$ in \eqref{1.1a}, and
after normalizing the harmless constant \(2K\) in the pressure term, obtain the following system:
\begin{equation}\label{1.1}
\left\{
\begin{aligned}
&\partial_t\rho^\varepsilon+\dv (\rho^\varepsilon u^\varepsilon)=0,\\
&\partial_t (\rho^\varepsilon u^\varepsilon)
+\dv(\rho^\varepsilon u^\varepsilon
\otimes u^\varepsilon)
+\varepsilon^2\rho^\varepsilon\nabla\rho^\varepsilon-\mu \Delta u^\varepsilon-(\mu+\lambda) \nabla\dv u^\varepsilon=0.
\end{aligned}
\right.
\end{equation}
Furthermore, we reformulate the original problem \eqref{1.1} in terms of the following perturbed variables:
\begin{align}
a^\varepsilon=\rho^\varepsilon-1, \quad u^\varepsilon=u^\varepsilon-0.\nonumber
\end{align}
Then, $(a^\varepsilon,u^\varepsilon)$ satisfies the following system
\begin{equation}\label{1.3}
\left\{
\begin{aligned}
&\partial_ta^\varepsilon+
\dv u^\varepsilon=-\dv (a^\varepsilon u^\varepsilon),\\
&\partial_t u^\varepsilon-\mu \Delta u^\varepsilon-(\mu +\lambda )\nabla\dv u^\varepsilon
+\varepsilon^2\nabla a^\varepsilon\\
&\quad=-u^\varepsilon\cdot\nabla u^\varepsilon
-\mu f(a^\varepsilon)\Delta u^\varepsilon
-(\mu+\lambda) f(a^\varepsilon)\nabla\dv u^\varepsilon,
\end{aligned}
\right.
\end{equation}
where $f(a^\varepsilon)=\frac{a^\varepsilon}{1+a^\varepsilon}$, with the initial condition
\begin{align}\label{1.4}
(a^\varepsilon,u^\varepsilon)|_{t=0}=(\rho^\varepsilon_0-1,u^\varepsilon_0).
\end{align}

Since we need to obtain uniform-in-$\varepsilon$ global estimates for $(a^\varepsilon,u^\varepsilon)$, the density dissipation structure induced by the pressure term is no longer valid. Our strategy relies on analyzing the solutions separately in different frequency regimes. 
To this end, we perform the following asymptotic analysis for the eigenvalues associated with the linearized system of \eqref{1.3}. 
We apply the operators 
$$\mathbb{Q}=-\nabla(-\Delta)^{-1}\dv \quad\text{and}\quad \mathbb{P}={\rm Id}+\nabla(-\Delta)^{-1}\dv$$ to \eqref{1.3}, and then obtain 
\begin{equation}\label{1.6.1}
\left\{
\begin{aligned}
&\partial_ta^\varepsilon+
\dv\mathbb{Q}u^\varepsilon=-\dv (a^\varepsilon u^\varepsilon),\\
&\partial_t \mathbb{Q}u^\varepsilon-(2\mu+\lambda) \Delta\mathbb{Q}u^\varepsilon
+\varepsilon^2\nabla a^\varepsilon
=\mathbb{Q}\big(-u^\varepsilon\cdot\nabla u^\varepsilon
-\mu f(a^\varepsilon)\Delta u^\varepsilon
-(\mu+\lambda) f(a^\varepsilon)\nabla\dv u^\varepsilon\big),\\
&(a^\varepsilon,\mathbb{Q}u^\varepsilon)|_{t=0}=(\rho^\varepsilon_0-1,\mathbb{Q}u^\varepsilon_0),
\end{aligned}
\right.
\end{equation}
and 
\begin{equation}\label{1.6.2}
\left\{
\begin{aligned}
&\partial_t \mathbb{P} u^\varepsilon-\mu \Delta\mathbb{P} u^\varepsilon=\mathbb{P}\big(-u^\varepsilon\cdot\nabla u^\varepsilon
-\mu f(a^\varepsilon)\Delta u^\varepsilon
-(\mu+\lambda) f(a^\varepsilon)\nabla\dv u^\varepsilon\big),\\
&\mathbb{P}u^\varepsilon|_{t=0}=\mathbb{P}u^\varepsilon_0.
\end{aligned}
\right.
\end{equation}
For the compressible part $\mathbb{Q}u^\varepsilon$, we consider the linearized system of \eqref{1.6.1} as follows
\begin{equation}\label{1.7}
\left\{
\begin{aligned}
&\partial_ta^\varepsilon+
\dv\mathbb{Q} u^\varepsilon=0,\\
&\partial_t \mathbb{Q}u^\varepsilon-(2\mu+\lambda) \Delta\mathbb{Q}u^\varepsilon
+\varepsilon^2\nabla a^\varepsilon=0,\\
&(a^\varepsilon,\mathbb{Q}u^\varepsilon)|_{t=0}=(\rho^\varepsilon_0-1,\mathbb{Q}u^\varepsilon_0).
\end{aligned}
\right.
\end{equation}
Let $G_1(t,x)$ be the Green function of \eqref{1.7}. Taking the Fourier transform of \eqref{1.7} on $x$, we derive 
\begin{equation}\label{1.8}
\left\{
\begin{aligned}
&\partial_t\widehat{G_1}(t,\xi)+B_1\widehat{G_1}(t,\xi)=0,\\
&\widehat{G_1}(0,\xi)=I,
\end{aligned}
\right.
\end{equation}
where 
$$B_1=
\begin{pmatrix}
0\quad& i\xi^\top \\
i\varepsilon^2\xi\quad & (2\mu+\lambda)|\xi|^2
\end{pmatrix}.
$$
After straightforward calculations, the determinant of the characteristic matrix of $-B_1$ is given by
\begin{align}
\text{det}(\lambda^{\prime} I+B_1)=\big(\lambda^\prime+(2\mu+\lambda)|\xi|^2\big)^{d-1}\big({\lambda^\prime}^2+(2\mu+\lambda)|\xi|^2\lambda^\prime+\varepsilon^2|\xi|^2\big)=0,
\end{align}
which implies that the eigenvalues of $-B_1$ are
\begin{align}
&\lambda_0=-(2\mu+\lambda)|\xi|^2\,\, (\text{multiplicity (d-1)}),\nonumber\\
&\lambda_1=-\frac{(2\mu+\lambda)|\xi|^2}{2}+
\sqrt{\frac{(2\mu+\lambda)^2|\xi|^4}{4}-\varepsilon^2|\xi|^2},\nonumber\\
&\lambda_2=-\frac{(2\mu+\lambda)|\xi|^2}{2}-
\sqrt{\frac{(2\mu+\lambda)^2|\xi|^4}{4}-\varepsilon^2|\xi|^2}.\nonumber
\end{align}
For the incompressible $\mathbb{P}u^\varepsilon$, considering the linearized system of \eqref{1.6.2}, we have 
\begin{equation}\label{1.11.1}
\left\{
\begin{aligned}
&\partial_t \mathbb{P} u^\varepsilon-\mu \Delta\mathbb{P} u^\varepsilon=0,\\
&\mathbb{P} u^\varepsilon|_{t=0}=\mathbb{P}u^\varepsilon_0.
\end{aligned}
\right.
\end{equation}
The corresponding eigenvalues are readily obtained as
$$\lambda_3=-\mu|\xi|^2\,\, (\text{multiplicity d}).$$
Then, simple calculations yield the asymptotic behavior of the eigenvalues
$\lambda_j(j=1,2)$ as follows.
For the frequency regime $|\xi|<<\varepsilon$, the eigenvalues satisfy 
\begin{align}
&\lambda_1=-\frac{(2\mu+\lambda)|\xi|^2}{2}+
i\varepsilon|\xi|+\varepsilon^{-1}\mathcal{O}(|\xi|^3),\nonumber\\
&\lambda_2=-\frac{(2\mu+\lambda)|\xi|^2}{2}-
i\varepsilon|\xi|+\varepsilon^{-1}\mathcal{O}(|\xi|^3),\nonumber
\end{align}
where $i=\sqrt{-1}$.
For the frequency regime $|\xi|>>\varepsilon$, we have 
\begin{align}
&\lambda_1=-\frac{\varepsilon^2}{(2\mu+\lambda)}+\varepsilon^{4}\mathcal{O}(|\xi|^{-2}),\nonumber\\
&\lambda_2=-(2\mu+\lambda)|\xi|^2
+\varepsilon^{2}\mathcal{O}(1).\nonumber
\end{align}

Therefore, the above spectral analysis suggests using the frequency scale \(\varepsilon\) as the first threshold to divide the frequency space into two parts: \(|\xi|>\varepsilon\) and \(|\xi|\leq \varepsilon\).
In addition, we attempt to estimate $(a^\varepsilon,u^\varepsilon)$ in the frequency regime $|\xi|\geq\varepsilon$ using the effective velocity. However, we find that the terms appearing in \eqref{4.51}, namely
$$
\varepsilon^2\| \omega^\varepsilon\|^{m+h}_{L^1_t(\dot B^{\frac{d}{2}-1}_{2,1})}\quad \text{and} \quad\varepsilon^4\|\nabla(-\Delta)^{-1}a^\varepsilon\|^{m+h}_{L^1_t(\dot B^{\frac{d}{2}-1}_{2,1})},
$$
cannot be absorbed by the dissipation terms
$$
\|\omega^\varepsilon\|^{m+h}_{L^1_t(\dot B^{\frac{d}{2}+1}_{2,1})}
\quad\text{and}\quad\varepsilon^2\|a^\varepsilon\|^{m+h}_{L^1_t(\dot B^{\frac{d}{2}}_{2,1})},
$$
respectively, where $\omega^\varepsilon$ is the effective velocity, and the definition of $\|\cdot\|^{m+h}_{\dot B^s_{p,r}}$ will be given at the end of this subsection.
To overcome this difficulty, we further decompose the frequency regime $|\xi|>\varepsilon$ into two parts:  $\varepsilon\leq|\xi|<\varepsilon^{1-\delta_c}$ and $|\xi|\geq\varepsilon^{1-\delta_c}$, where $0<\delta_c<1$ is a  constant. 
In summary, we divide the frequency space into the low-frequency regime $|\xi|\leq \varepsilon$, medium-frequency $\varepsilon<|\xi|\leq \varepsilon^{1-\delta_c}$, and high-frequency regime $|\xi|>\varepsilon^{1-\delta_c}$, and estimate $(a^\varepsilon,u^\varepsilon)$ in each regime separately.
Given that our analysis will depend on the dyadic Littlewood--Paley decomposition to split the frequencies, we also introduce the thresholds
$$J_0^\varepsilon:=[\log_2\varepsilon]\quad\text{and}\quad J_1^\varepsilon:=[(1-\delta_c)\log_2\varepsilon].$$

Following the above frequency decomposition, 
we divide distributions into low-,
medium-, and high-frequency parts. 
For any \(g\in\mathcal S'_h\), we write
\begin{align*}
    g=\sum_{k\in\mathbb Z}\dot\Delta_k g
\end{align*}
in \(\mathcal S'_h\),  and define
\begin{align*}
g^\ell
:=
\sum_{k\leq J^\varepsilon_0}\dot\Delta_k g,\qquad
g^m
:=
\sum_{J^\varepsilon_0< k\le J^\varepsilon_1}\dot\Delta_k g,\qquad
g^h
:=
\sum_{k>J^\varepsilon_1}\dot\Delta_k g,
\end{align*}
where $\dot\Delta_k$ is the classical Littlewood--Paley frequency-localization operator, see \cite{BCD-Book-2011}.
We also introduce the shorthand notation
\begin{align*}
 g^{\ell+m}:=g^\ell+g^m,
\qquad
g^{m+h}:=g^m+g^h, 
\end{align*}
and the corresponding semi-norms, for any \(s\in\mathbb R\), \(1\le p,r\le\infty\), and
\(\varkappa\in[1,\infty]\),
\begin{equation*}
\left\{
\begin{aligned}
\|g\|^\ell_{\dot B^s_{p,r}}
&:=
\Big\|
\big\{2^{ks}\|\dot\Delta_k g\|_{L^p}\big\}_{k\leq J^\varepsilon_0}
\Big\|_{\ell^r},
\\
\|g\|^m_{\dot B^s_{p,r}}
&:=
\Big\|
\big\{2^{ks}\|\dot\Delta_k g\|_{L^p}\big\}_{J^\varepsilon_0< k\le J^\varepsilon_1}
\Big\|_{\ell^r},
\\
\|g\|^h_{\dot B^s_{p,r}}
&:=
\Big\|
\big\{2^{ks}\|\dot\Delta_k g\|_{L^p}\big\}_{k>J^\varepsilon_1}
\Big\|_{\ell^r},
\end{aligned}
\right.
\end{equation*}
and, similarly,
\begin{equation*}
\left\{
\begin{aligned}
\|g\|^\ell_{\widetilde L^\varkappa_T(\dot B^s_{p,r})}
&:=
\Big\|
\big\{2^{ks}\|\dot\Delta_k g\|_{L^\varkappa_T(L^p)}\big\}_{k\leq J^\varepsilon_0}
\Big\|_{\ell^r},
\\
\|g\|^m_{\widetilde L^\varkappa_T(\dot B^s_{p,r})}
&:=
\Big\|
\big\{2^{ks}\|\dot\Delta_k g\|_{L^\varkappa_T(L^p)}\big\}_{J^\varepsilon_0< k\le J^\varepsilon_1}
\Big\|_{\ell^r},
\\
\|g\|^h_{\widetilde L^\varkappa_T(\dot B^s_{p,r})}
&:=
\Big\|
\big\{2^{ks}\|\dot\Delta_k g\|_{L^\varkappa_T(L^p)}\big\}_{k>J^\varepsilon_1}
\Big\|_{\ell^r}.
\end{aligned}
\right.
\end{equation*}
When \(r=\infty\), the above \(\ell^r\)-norms are understood in the usual sense.

For the low- and high-frequency parts, it follows from the standard Bernstein inequality that the following elementary estimates hold, i.e., for any \(s'\!>0\),
\begin{equation}\label{1.13.1}
\left\{
\begin{aligned}
&\|g^\ell\|_{\dot B^s_{p,r}}
\lesssim
\|g\|^\ell_{\dot B^s_{p,r}}
\lesssim  \varepsilon^{s^{\prime}}
\|g\|^\ell_{\dot B^{s-s'}_{p,r}},
\\
& {\|g^h\|_{\dot B^s_{p,r}}
\lesssim
\|g\|^h_{\dot B^s_{p,r}}
\lesssim
\varepsilon^{s'(\delta_c-1)}\|g\|^h_{\dot B^{s+s'}_{p,r}}},
\\
&\|g^\ell\|_{\widetilde L^\varkappa_T(\dot B^s_{p,r})}
\lesssim
\|g\|^\ell_{\widetilde L^\varkappa_T(\dot B^s_{p,r})}
\lesssim
\varepsilon^{s^{\prime}}\|g\|^\ell_{\widetilde L^\varkappa_T(\dot B^{s-s'}_{p,r})},
\\
& {\|g^h\|_{\widetilde L^\varkappa_T(\dot B^s_{p,r})}
\lesssim
\|g\|^h_{\widetilde L^\varkappa_T(\dot B^s_{p,r})}
\lesssim \varepsilon^{s'(\delta_c-1)}
\|g\|^h_{\widetilde L^\varkappa_T(\dot B^{s+s'}_{p,r})}},
\end{aligned}
\right.
\end{equation}
here and in \eqref{1.13.2}, the implicit constants do not depend on $J^\varepsilon_0$ and $J^\varepsilon_1$. For the medium-frequency part, 
based on the definitions of $\|\cdot\|^m_{\dot B^s_{p,r}}$ and $\|\cdot\|^m_{\widetilde L^\varkappa_T(\dot B^s_{p,r})}$, we directly obtain that, for any
\(s_1,s_2\in\mathbb R\) satisfying $s_1\leq s_2$, the following Bernstein-type inequalities hold, 
\begin{equation}\label{1.13.2}
\left\{
\begin{aligned}
&\|g\|^m_{\dot B^{s_1}_{p,r}}
\lesssim
\varepsilon^{s_1-s_2} \|g\|^m_{\dot B^{s_2}_{p,r}},\\
&\|g\|^m_{\dot B^{s_2}_{p,r}}
\lesssim
\varepsilon^{(s_2-s_1)(1-\delta_c)}\|g\|^m_{\dot B^{s_1}_{p,r}},\\
&\|g\|^m_{\widetilde L^\varkappa_T(\dot B^{s_1}_{p,r})}
\lesssim
\varepsilon^{s_1-s_2} \|g\|^m_{\widetilde L^\varkappa_T(\dot B^{s_2}_{p,r})},\\
&\|g\|^m_{\widetilde L^\varkappa_T(\dot B^{s_2}_{p,r})}
\lesssim
\varepsilon^{(s_2-s_1)(1-\delta_c)} \|g\|^m_{\widetilde L^\varkappa_T(\dot B^{s_1}_{p,r})}.
\end{aligned}
\right.
\end{equation}
Obviously, for fixed $\varepsilon$, we directly derive from \eqref{1.13.2} the following equivalence relations, i.e., 
for any
\(s_1,s_2\in\mathbb R\),
\begin{align*}
\|g\|^m_{\dot B^{s_1}_{p,r}}
\lesssim
\|g\|^m_{\dot B^{s_2}_{p,r}},
\qquad
\|g\|^m_{\widetilde L^\varkappa_T(\dot B^{s_1}_{p,r})}
\lesssim
\|g\|^m_{\widetilde L^\varkappa_T(\dot B^{s_2}_{p,r})}.
\end{align*}
These estimates in \eqref{1.13.1} and \eqref{1.13.2} will be used repeatedly.

Finally, we shall use the following hybrid Besov norms adapted to the above
low-medium-high decomposition. If \(s<s'\), we define
\begin{align*}
\|g\|_{\dot B^s_{p,r}\cap\dot B^{s'}_{p,r}}
:=
\|g\|^{\ell+m}_{\dot B^s_{p,r}}
+\|g\|^h_{\dot B^{s'}_{p,r}},
\end{align*}
where
\begin{align*}
\|g\|^{\ell+m}_{\dot B^s_{p,r}}
:=
\|g\|^\ell_{\dot B^s_{p,r}}
+\|g\|^m_{\dot B^s_{p,r}}.    
\end{align*}
If \(s>s'\), we define
\begin{align*}
\|g\|_{\dot B^s_{p,r}+\dot B^{s'}_{p,r}}
:=
\|g\|^\ell_{\dot B^s_{p,r}}
+\|g\|^{m+h}_{\dot B^{s'}_{p,r}},
\end{align*}
where
\begin{align*}
 \|g\|^{m+h}_{\dot B^{s'}_{p,r}}
:=
\|g\|^m_{\dot B^{s'}_{p,r}}
+\|g\|^h_{\dot B^{s'}_{p,r}}.   
\end{align*}
The same convention will be used for the corresponding Chemin-Lerner norms.


\subsection{Main results}
We first focus on the local well-posedness of strong solutions.
\begin{thm}[Local well-posedness]\label{Th0}
Let \(d\ge2\) and \(0<\varepsilon<1\). Assume that
\begin{align*}
\varepsilon a_0^\varepsilon\in \dot B^{\frac d2-1}_{2,1},
\qquad
 u_0^\varepsilon\in \dot B^{\frac d2-1}_{2,1},  \qquad
a_0^\varepsilon\in \dot B^{\frac d2}_{2,1},
\end{align*}
and that there exists a small constant \(\kappa_0>0\) such that
\begin{align}\label{NZG3.2}
 \|a_0^\varepsilon\|_{\dot B^{\frac{d}{2}}_{2,1}}\leq \kappa_0.
\end{align}
Then, there exists \(T_\varepsilon>0\) such that the Cauchy problem
\eqref{1.3}--\eqref{1.4} admits a unique strong solution
\((a^\varepsilon, u^\varepsilon)\) on \([0,T_\varepsilon]\), satisfying
\begin{align*}
 1+a^\varepsilon(t,x)>0,
\qquad (t,x)\in[0,T_\varepsilon]\times\mathbb R^d,   
\end{align*}
and
\begin{equation*}
\left\{
\begin{aligned}
&a^\varepsilon
\in
\mathcal C\big([0,T_\varepsilon];\dot B^{\frac d2}_{2,1}\big),\\
& u^\varepsilon
\in
\mathcal C\big([0,T_\varepsilon];\dot B^{\frac d2-1}_{2,1}\big)
\cap
L^1\big(0,T_\varepsilon;\dot B^{\frac d2+1}_{2,1}\big),\\
&\varepsilon a^\varepsilon
\in
\mathcal C\big([0,T_\varepsilon];\dot B^{\frac d2-1}_{2,1}\big)
\cap
L^1\big(0,T_\varepsilon;\dot B^{\frac d2+1}_{2,1}
+\dot B^{\frac d2}_{2,1}\big).
\end{aligned}
\right.
\end{equation*}
\end{thm}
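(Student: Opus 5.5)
Since $\varepsilon\in(0,1)$ is fixed in Theorem~\ref{Th0} and $T_\varepsilon$ may depend on $\varepsilon$, I do not aim for uniformity here. The plan is to treat \eqref{1.3} as a classical compressible Navier--Stokes system in the critical $L^2$ framework and reduce to the known local theory (Danchin \cite{Danchin-IM-00}; see also \cite{BCD-Book-2011}). The only new part is the extra regularity of $\varepsilon a^\varepsilon$, which I recover a posteriori.

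\emph{Step 1: construction.} Mollify the data: $a_0^{\varepsilon,n}=\dot S_n a_0^\varepsilon$ and $u_0^{\varepsilon,n}=\dot S_n u_0^\varepsilon$. Solve \eqref{1.3} with a Friedrichs scheme, or take the smooth local solutions of \cite{Danchin-IM-00}. Then derive $n$-independent bounds on a short interval $[0,T]$ for
\[
X_T:=\|a\|_{\widetilde L^\infty_T(\dot B^{\frac d2}_{2,1})}+\|u\|_{\widetilde L^\infty_T(\dot B^{\frac d2-1}_{2,1})\cap L^1_T(\dot B^{\frac d2+1}_{2,1})}.
\]
For $a$, use the transport estimate in $\dot B^{d/2}_{2,1}$ with commutator bounds. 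This gives
\[
\|a\|_{\widetilde L^\infty_T(\dot B^{\frac d2}_{2,1})}\le e^{C\|\nabla u\|_{L^1_T(\dot B^{\frac d2}_{2,1})}}\bigl(\|a_0\|_{\dot B^{\frac d2}_{2,1}}+\|u\|_{L^1_T(\dot B^{\frac d2+1}_{2,1})}\bigr).
\]
For $u$, treat $\varepsilon^2\nabla a$ as a source in the Lamé heat equation. Its cost is $\varepsilon^2 T\|a\|_{L^\infty_T(\dot B^{d/2}_{2,1})}$ in $L^1_T(\dot B^{d/2}_{2,1})$. I place it in $\dot B^{d/2-1}_{2,1}$ after splitting low and high frequencies, so that it yields a factor $T$ or $T^{1/2}$.

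The quasilinear terms $f(a)\Delta u$ are bounded by
\[
\|f(a)\|_{L^\infty_T(\dot B^{\frac d2}_{2,1})}\|u\|_{L^1_T(\dot B^{\frac d2+1}_{2,1})}\lesssim \kappa_0\|u\|_{L^1_T(\dot B^{\frac d2+1}_{2,1})}.
\]
This is where \eqref{NZG3.2} is used: the term is absorbed into the dissipation, so no $a$-dependent variable-coefficient heat estimate is needed. The embedding $\dot B^{d/2}_{2,1}\hookrightarrow L^\infty$ together with smallness of $a$ also keeps $1+a\ge 1/2$, so $f$ is smooth along the flow.

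The term $u\cdot\nabla u$ is controlled by $\|u\|_{L^2_T(\dot B^{d/2}_{2,1})}^2$. I handle it in the standard way: write $u=u_L+\tilde u$, where $u_L$ is the free Lamé evolution of $u_0$. Since $\|u_L\|_{L^1_T(\dot B^{d/2+1}_{2,1})}\to0$ as $T\to0$, this quantity is small for $T$ small, and this choice of smallness fixes $T_\varepsilon$. This gives uniform bounds in $X_{T_\varepsilon}$. Compactness (Aubin--Lions on time derivatives in lower-order spaces) then produces a solution, and time continuity follows from the equations.

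\emph{Step 2: the $\varepsilon a$ bounds.} Already $\varepsilon a\in \mathcal C(\dot B^{d/2-1}_{2,1})$ at low frequency, by propagating $\varepsilon a_0\in\dot B^{d/2-1}_{2,1}$ with $\partial_t a=-\dv((1+a)u)$ and the product law $\dot B^{d/2}_{2,1}\times\dot B^{d/2}_{2,1}\to\dot B^{d/2}_{2,1}$ (the target index $d/2-1$ after the divergence). Combined with $a\in\dot B^{d/2}_{2,1}$, this gives $\varepsilon a\in \dot B^{d/2-1}_{2,1}\cap\dot B^{d/2}_{2,1}$.

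For the $L^1$ statement, the low-frequency component of $\varepsilon a$ in $L^1_T(\dot B^{d/2+1}_{2,1})$ is bounded through Bernstein, $\|\varepsilon a\|^\ell_{\dot B^{d/2+1}}\lesssim \|a\|_{\dot B^{d/2}}$, times $T$. The high part lies in $L^1_T(\dot B^{d/2}_{2,1})$ trivially, with cost $\le T\|a\|_{L^\infty_T(\dot B^{d/2})}$. On a finite interval this is therefore immediate.

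\emph{Step 3: uniqueness.} Estimate the difference $(\delta a,\delta u)$ in $\widetilde L^\infty_T(\dot B^{d/2-1}_{2,1})\times(\widetilde L^\infty_T(\dot B^{d/2-2}_{2,1})\cap L^1_T(\dot B^{d/2}_{2,1}))$, with the usual loss of one derivative on the density. For $d=2$ this requires the endpoint $\widetilde L^\infty(\dot B^{0}_{2,\infty})$ and a logarithmic interpolation (Osgood) argument, as in Danchin's work.

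I expect the main obstacle to be this $d=2$ uniqueness, since the product $\delta a\,\Delta u$ with $\delta a\in\dot B^{0}_{2,1}$ fails at the endpoint. In existence, the obstacle is absorbing $f(a)\Delta u$; the smallness $\kappa_0$ handles it, and otherwise I would use a frozen-coefficient localization.
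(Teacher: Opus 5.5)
Your proposal is correct and follows essentially the paper's proof. It uses the same a priori bounds: the transport estimate for $a$ in $\dot B^{\frac d2}_{2,1}$, the split of $u$ into the free Lam\'e flow plus a remainder, absorption of $f(a)\mathcal A u$ through $\kappa_0$, and the cost $\varepsilon^2T\|a\|_{\widetilde L^\infty_T(\dot B^{\frac d2}_{2,1})}$ for the pressure. It also uses the same trivial bound $\varepsilon\|a\|_{L^1_T(\dot B^{\frac d2+1}_{2,1}+\dot B^{\frac d2}_{2,1})}\lesssim \varepsilon T\|a\|_{\widetilde L^\infty_T(\dot B^{\frac d2}_{2,1})}$ and the same uniqueness split (lower-order stability for $d\ge3$, Osgood for $d=2$).

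There are only two harmless deviations. You construct the solution by Friedrichs regularization and compactness in every dimension, whereas the paper uses the Cauchy property of the linear iteration for $d\geq3$. You also recover $\varepsilon a\in\mathcal C([0,T];\dot B^{\frac d2-1}_{2,1})$ a posteriori from $\partial_t a=-\dv((1+a)u)\in L^1_T(\dot B^{\frac d2-1}_{2,1})$, instead of propagating it along the iterates; this is legitimate because the velocity bounds never use the $\dot B^{\frac d2-1}_{2,1}$ norm of $\varepsilon a$.
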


\begin{rem} 
The smallness condition on \(a_0^\varepsilon\) imposed in \eqref{NZG3.2}
is only used to carry out the Eulerian iterative argument, in particular to
absorb the terms involving $f(a^\varepsilon)\mathcal{A}{u}^\varepsilon$ 
in the Lam\'{e} estimate. In fact, this smallness assumption can be replaced by the weaker non-vacuum condition that \(1+a_0^\varepsilon\) is bounded away
from zero, provided one uses the Lagrangian approach; see
\cite[Section 3.2 and Theorem 3]{Danchin-2018} and also the related critical space theory in \cite{Danchin-IM-00}. Since the Eulerian approach of \cite[Section 3.1]{Danchin-2018} is sufficient for the present argument, we work under the smallness assumption \eqref{NZG3.2}.
\end{rem}

\begin{rem}
The proof of Theorem \ref{Th0} follows the Eulerian iterative argument  in \cite[Section 3.1]{Danchin-2018}. Within the current \(L^2\)-based framework, the low-order Cauchy contraction closes directly when \(d\geq3\). In the endpoint case where \(d = 2\), the same contraction argument is no longer applicable in the \(\ell^1\)-Besov setting. Consequently, we employ the standard endpoint procedure: existence is derived from the compactness argument based on Friedrichs regularization, while uniqueness is obtained from the logarithmic interpolation inequality and Osgood's lemma; refer to \cite[Section~3.1]{Danchin-2018} and \cite{Danchin-IM-00}.
\end{rem}

\begin{rem}
Compared with \cite[Section 3.1]{Danchin-2018}, an additional point   in our scaled system is the parameter-dependent lower-order bound for $\varepsilon a^\varepsilon$ in $L^\infty_T(\dot B^{\frac d2-1}_{2,1})
\cap
L^1_T(\dot B^{\frac d2+1}_{2,1}+\dot B^{\frac d2}_{2,1})$, 
which is obtained only after closing the main estimates for
\(a^\varepsilon_n\) and \( u^\varepsilon_n\).
\end{rem}

Based on Theorem \ref{Th0}, we establish the following uniform global well-posedness result.
\begin{thm}[Uniform-in-$\varepsilon$ global estimate]\label{Th1}
Let \(d\ge2\) and \(0<\varepsilon<1\). Assume that the initial data
\((a_0^\varepsilon, u_0^\varepsilon)\) satisfy
\begin{align*}
 \varepsilon (a_0^{\varepsilon})^\ell\in \dot B^{\frac d2-1}_{2,1},
\qquad u_0^\varepsilon\in \dot B^{\frac d2-1}_{2,1},
\qquad
(a_0^{\varepsilon})^{m+h}\in \dot B^{\frac d2}_{2,1}.   
\end{align*} 
There exists a constant \(\delta_0>0\), independent of \(\varepsilon\), such that if
\begin{align}\label{NJK1.13}
\mathcal{X}_{\varepsilon,0}:=&\,
\varepsilon\|a^\varepsilon_0\|^\ell_{\dot B^{\frac d2-1}_{2,1}}
+\|u^\varepsilon_0\|_{\dot B^{\frac d2-1}_{2,1}}+\|a^\varepsilon_0\|^{m+h}_{\dot B^{\frac d2}_{2,1}}
\le \delta_0,
\end{align}
then the Cauchy problem \eqref{1.3}--\eqref{1.4} admits a unique global strong solution
\((a^\varepsilon,u^\varepsilon)\) satisfying
\begin{align*}
    \rho^\varepsilon=1+a^\varepsilon>0.
\end{align*}
Moreover,
for all \(t\ge0\),
\begin{align}\label{NJK1.14}
\mathcal X_\varepsilon(t)
\leq C\mathcal X_{\varepsilon,0},
\end{align}
where \(C>0\) is independent of both \(t\) and \(\varepsilon\), and
\begin{align}\label{NJK1.15}
\mathcal{X}_\varepsilon(t):=&\,
\varepsilon\|a^\varepsilon\|^\ell_{\widetilde L_t^\infty(\dot B^{\frac d2-1}_{2,1})}
+\|u^\varepsilon\|^\ell_{\widetilde L_t^\infty(\dot B^{\frac d2-1}_{2,1})}+\varepsilon\|a^\varepsilon\|^\ell_{ L_t^1(\dot B^{\frac d2+1}_{2,1})}+\|u^\varepsilon\|^\ell_{ L_t^1(\dot B^{\frac d2+1}_{2,1})}\nonumber\\
&\,+\|a^\varepsilon\|^{m+h}_{\widetilde L_t^\infty(\dot B^{\frac d2}_{2,1})}
+\|u^\varepsilon\|^{m+h}_{\widetilde L_t^\infty(\dot B^{\frac d2-1}_{2,1})}+\varepsilon^2\|a^\varepsilon\|^{m+h}_{L_t^1(\dot B^{\frac d2}_{2,1})}
+\|u^\varepsilon\|^{m+h}_{ L_t^1(\dot B^{\frac d2+1}_{2,1})}.
\end{align}
\end{thm}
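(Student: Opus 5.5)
Global existence will follow from Theorem \ref{Th0} combined with a continuation argument, once we have an a priori estimate of the form
\begin{align*}
\mathcal X_\varepsilon(t)\le C\mathcal X_{\varepsilon,0}+C\big(\mathcal X_\varepsilon(t)\big)^2
\end{align*}
on any interval where a solution exists with $\mathcal X_\varepsilon(t)\le 2C\delta_0$. Here $C$ must be independent of $\varepsilon$ and $t$. Since $\|a\|_{L^\infty}\lesssim \|a\|_{\dot B^{d/2}_{2,1}}$ is controlled by $\mathcal X_\varepsilon$ (the low part via $\varepsilon\|a\|^\ell_{\dot B^{d/2-1}}$ and Bernstein $2^{k}\le\varepsilon$ on $k\le J_0^\varepsilon$), smallness keeps $1+a^\varepsilon\ge 1/2$. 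This is also what ensures the composition estimates for $f(a^\varepsilon)$ apply and that \eqref{NZG3.2} is propagated, so that Theorem \ref{Th0} can be reapplied at later times. Uniqueness is inherited from Theorem \ref{Th0}.

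\emph{Low frequencies $k\le J_0^\varepsilon$.} Here I set $b=\varepsilon a^\varepsilon$, which turns \eqref{1.6.1} into a symmetric hyperbolic–parabolic system with coupling $\varepsilon\dv$, $\varepsilon\nabla$. I would apply $\dot\Delta_k$ and run the standard Danchin energy method: an $L^2$ energy for $(b_k,\mathbb Q u_k)$ plus a small multiple of the cross term $\varepsilon^{-1}\langle \mathbb Qu_k,\nabla b_k\rangle$ normalized as $\eta\,2^{-2k}\varepsilon\langle\cdot\rangle$. Since $|\xi|\le\varepsilon$, the spectral picture (eigenvalues $\approx-|\xi|^2\pm i\varepsilon|\xi|$) gives parabolic damping $2^{2k}$ for both components. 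This yields the bounds on $\varepsilon\|a\|^\ell_{L^1(\dot B^{d/2+1})}$ and $\|u\|^\ell_{L^1(\dot B^{d/2+1})}$, with source terms $\varepsilon\dv(au)$ and the momentum nonlinearities in $\dot B^{d/2-1}$. For the product $\varepsilon\,\dv(au)$, I split $a=a^\ell+a^{m+h}$. The low piece is bounded by $\varepsilon\|a^\ell\|_{\dot B^{d/2}}\|u\|_{\dot B^{d/2}}$ and interpolated between the $L^\infty$ and $L^1$ low-frequency controls. For the $m+h$ piece I use $\varepsilon\le 1$ together with $\|a\|^{m+h}_{L^\infty(\dot B^{d/2})}\|u\|_{L^1(\dot B^{d/2+1})}$.

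\emph{Medium and high frequencies $k>J_0^\varepsilon$.} For these I would introduce the effective velocity $\omega^\varepsilon=\mathbb Qu^\varepsilon-\frac{\varepsilon^2}{2\mu+\lambda}\nabla(-\Delta)^{-1}a^\varepsilon$. It satisfies a heat equation with lower-order source $\frac{\varepsilon^2}{2\mu+\lambda}\omega$ and $\frac{\varepsilon^4}{(2\mu+\lambda)^2}\nabla(-\Delta)^{-1}a$. The density then obeys a damped transport equation $\partial_ta+\frac{\varepsilon^2}{2\mu+\lambda}a+u\cdot\nabla a=-\dv\omega-a\dv u$. The damped transport estimate in $\dot B^{d/2}$ (with commutator estimates, since $a$ has no smoothing) gives $\|a\|^{m+h}_{L^\infty(\dot B^{d/2})}+\varepsilon^2\|a\|^{m+h}_{L^1(\dot B^{d/2})}$, modulo $\|\omega\|^{m+h}_{L^1(\dot B^{d/2+1})}$. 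The heat estimate for $\omega$ gives the $L^1(\dot B^{d/2+1})$ bound, and $\mathbb Pu$ is handled directly from \eqref{1.6.2}.

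\emph{The main obstacle} is the pair of linear error terms $\varepsilon^2\|\omega\|^{m+h}_{L^1(\dot B^{d/2-1})}$ and $\varepsilon^4\|\nabla(-\Delta)^{-1}a\|^{m+h}_{L^1(\dot B^{d/2-1})}$, i.e.\ exactly the difficulty flagged after \eqref{1.13.2}. In the high regime $k>J_1^\varepsilon$, I would apply \eqref{1.13.1}. The first term is then $\lesssim\varepsilon^{2}\varepsilon^{-2(1-\delta_c)}\|\omega\|^h_{L^1(\dot B^{d/2+1})}=\varepsilon^{2\delta_c}\|\omega\|^h_{L^1(\dot B^{d/2+1})}$, and the second is $\lesssim\varepsilon^{4}\varepsilon^{-2(1-\delta_c)}\|a\|^h_{L^1(\dot B^{d/2})}=\varepsilon^{2\delta_c}\cdot\varepsilon^2\|a\|^h_{L^1(\dot B^{d/2})}$. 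Both can therefore be absorbed for $\varepsilon$ small; for $\varepsilon$ not small, only finitely many regimes are involved and the constants stay harmless. In the medium regime $\varepsilon<2^k\le\varepsilon^{1-\delta_c}$, I would not use $\omega$. Instead I would repeat the low-frequency Lyapunov argument with a weight chosen so that the cross term is dominated. For $|\xi|\ge\varepsilon$ this yields damping $\varepsilon^2$ on $a$ in $\dot B^{d/2}$ and $2^{2k}$ on $u$. I expect the accompanying loss in the nonlinear terms, for instance terms with $\nabla a$ in $\dot B^{d/2}$, to be compensated by the medium Bernstein bounds \eqref{1.13.2}, with $\delta_c$ chosen small. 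I anticipate that getting consistent weights across $J_0^\varepsilon$ and $J_1^\varepsilon$ (the interface terms) and controlling $u\cdot\nabla a$ at high frequency will be the most delicate step. Summing the three regimes, bounding all nonlinear terms by $\mathcal X_\varepsilon^2$ through product and composition laws in hybrid norms, and taking $\delta_0$ small then closes the bootstrap and gives \eqref{NJK1.14}.
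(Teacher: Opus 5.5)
Your architecture matches the paper's. It runs the bootstrap $\mathcal X_\varepsilon\le C_0\mathcal X_{\varepsilon,0}+C_0\mathcal X_\varepsilon^2$ through the continuation criterion of Theorem \ref{Th0}. For $k\le J_0^\varepsilon$ it uses a hypocoercive energy for $(\varepsilon a^\varepsilon,u^\varepsilon)$ with cross term $\eta_1\langle\dot\Delta_k u^\varepsilon,\nabla\dot\Delta_k a^\varepsilon\rangle$. For $J_0^\varepsilon<k\le J_1^\varepsilon$ it uses a Lyapunov functional giving only $\varepsilon^2$-damping of $a^\varepsilon$. For $k>J_1^\varepsilon$ it uses the $\varepsilon$-dependent effective velocity with $\varepsilon^{2\delta_c}$-absorption.

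Your $\omega^\varepsilon$ has the wrong sign. It must be $\mathbb Qu^\varepsilon+\frac{\varepsilon^2}{2\mu+\lambda}\nabla(-\Delta)^{-1}a^\varepsilon$; with your sign the pressure is doubled rather than cancelled, and the density equation becomes anti-damped. The properties you then list are those of the correct $\omega^\varepsilon$.

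\textbf{The step that fails is the medium band.} To control $\|a^\varepsilon\|^m_{\dot B^{\frac d2}_{2,1}}$ and $\|u^\varepsilon\|^m_{\dot B^{\frac d2-1}_{2,1}}$ uniformly, the functional must control $\|\nabla\dot\Delta_k a^\varepsilon\|_{L^2}$ at the same level as $\|\dot\Delta_k u^\varepsilon\|_{L^2}$. You therefore meet $\langle\nabla\dot\Delta_k(u^\varepsilon\cdot\nabla a^\varepsilon),\nabla\dot\Delta_k a^\varepsilon\rangle$, which apparently needs $\nabla a^\varepsilon$ in $\dot B^{\frac d2}_{2,1}$.

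Bernstein in the band $\varepsilon<2^k\le\varepsilon^{1-\delta_c}$ cannot pay for this derivative. It costs a factor up to $\varepsilon^{1-\delta_c}$. Meanwhile the only time-integrable control of $(a^\varepsilon)^m$ is $\varepsilon^2\|a^\varepsilon\|^m_{L^1_t(\dot B^{\frac d2}_{2,1})}$, and $u^\varepsilon$ is not controlled in $L^1_t(L^\infty)$. Pairing, for instance, $\|u^\varepsilon\|_{L^2_t(L^\infty)}$ with $\varepsilon^{1-\delta_c}\|a^\varepsilon\|^m_{L^2_t(\dot B^{\frac d2}_{2,1})}$ leaves $\varepsilon^{-\delta_c}\mathcal X_\varepsilon^2$. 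This is not uniform in $\varepsilon$ for any fixed $\delta_c>0$.

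The paper instead uses the transport structure, which you already planned to use at high frequencies:
\begin{itemize}
\item Write $\dot\Delta_k\partial_j(u^\varepsilon\cdot\nabla a^\varepsilon)=u^\varepsilon\cdot\nabla\dot\Delta_k\partial_j a^\varepsilon-[u^\varepsilon\cdot\nabla,\partial_j\dot\Delta_k]a^\varepsilon$.
\item After integration by parts, the first term is bounded by $\|\dv u^\varepsilon\|_{L^\infty}\|\nabla\dot\Delta_k a^\varepsilon\|_{L^2}^2$.
\item Rewrite the commutator as $[u^\varepsilon\cdot\nabla,\dot\Delta_k]\partial_j a^\varepsilon-\dot\Delta_k(\partial_j u^\varepsilon\cdot\nabla a^\varepsilon)$. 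Lemma \ref{LA.5} and the product law then bound it by $\|a^\varepsilon\|_{\widetilde L^\infty_t(\dot B^{\frac d2}_{2,1})}\|u^\varepsilon\|_{L^1_t(\dot B^{\frac d2+1}_{2,1})}$; see \eqref{4.37}.
\end{itemize}
No power of $\varepsilon$ is lost, and $\delta_c$ plays no role there.

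\textbf{Two further steps are incomplete.} First, after dividing by $\sqrt{E_{m,k}}$, the medium-band functional only yields the common rate $\varepsilon^2$ in $L^1_t$. The $2^{2k}$-dissipation of $u^\varepsilon$ it contains is only the $L^2_t$-type bound $\int_0^t\|\nabla\dot\Delta_k u^\varepsilon\|^2_{L^2}$. The paper obtains $\|u^\varepsilon\|^m_{L^1_t(\dot B^{\frac d2+1}_{2,1})}$ by a separate step (Lemma \ref{L4.4}). That step applies maximal regularity for the Lam\'e system restricted to the band, treating $\varepsilon^2\nabla a^\varepsilon$ as a source controlled by the already-bounded $\varepsilon^2\|a^\varepsilon\|^m_{L^1_t(\dot B^{\frac d2}_{2,1})}$.

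Second, in the low-frequency product, $\varepsilon\le 1$ plus a product law does not give a bound by $\|a^\varepsilon\|^{m+h}_{\widetilde L^\infty_t(\dot B^{\frac d2}_{2,1})}\|u^\varepsilon\|_{L^1_t(\dot B^{\frac d2+1}_{2,1})}$, since that would need $u^\varepsilon\in L^1_t(\dot B^{\frac d2}_{2,1})$. Use instead
$\varepsilon\|a^\varepsilon\|^{m+h}_{L^2_t(\dot B^{\frac d2}_{2,1})}\le(\varepsilon^2\|a^\varepsilon\|^{m+h}_{L^1_t(\dot B^{\frac d2}_{2,1})})^{\frac12}(\|a^\varepsilon\|^{m+h}_{\widetilde L^\infty_t(\dot B^{\frac d2}_{2,1})})^{\frac12}$
against $\|u^\varepsilon\|_{L^2_t(\dot B^{\frac d2}_{2,1})}$, as in \eqref{3.14}.

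\textbf{Non-small $\varepsilon$ and interfaces.} Your remark about non-small $\varepsilon$ is legitimate. The paper's absorption of the $\varepsilon^{2\delta_c}$ terms tacitly requires $\varepsilon^{2\delta_c}$ to be below a fixed constant. Shifting $J_1^\varepsilon$ up by a fixed integer independent of $\varepsilon$ settles this. Your worry about interface terms is unfounded: all energy estimates are done block by block and summed within each regime. The nonlinear terms are bounded in global norms and only afterwards split into $\ell$, $m$, $h$ pieces.
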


\begin{rem}
The argument does not seem to yield  an estimate for  $((a^\varepsilon)^h,(u^\varepsilon)^h)$ in the critical $L^p$-framework by using the effective velocity. The obstruction comes from the estimates of the medium-frequency part for some nonlinear terms, such as  
$\|f(a^\varepsilon)\Delta u^\varepsilon\|^m_{L^1(\dot B^{\frac{d}{2}-1}_{2,1})}$. Applying Bony’s paraproduct decomposition, we rewrite it as
\begin{align}\label{1.17.2}
f(a^\varepsilon)\Delta u^\varepsilon = T_{\Delta u^\varepsilon}f(a^\varepsilon) + R(f(a^\varepsilon),\Delta u^\varepsilon) + T_{f(a^\varepsilon)}\Delta u^\varepsilon.
\end{align}
Through detailed calculation, we observe that the term $\|T_{\Delta u^\varepsilon}f(a^\varepsilon)+R(f(a^\varepsilon),\Delta u^\varepsilon)+T_{ f(a^\varepsilon)}\Delta (u^\varepsilon)^{l+m}\|^m_{L^1(\dot B^{\frac{d}{2}-1}_{2,1})}$ can be controlled by 
$$\|u^\varepsilon\|_{L^1(\dot B^{\frac{d}{p}+1}_{p,1})}
\Big(\varepsilon\|a^\varepsilon\|^\ell_{\widetilde L^\infty(\dot B^{\frac{d}{2}-1}_{2,1})}
+\|a^\varepsilon\|^m_{\widetilde L^\infty(\dot B^{\frac{d}{2}}_{2,1})}+\|a^\varepsilon\|^h_{\widetilde L^\infty(\dot B^{\frac{d}{p}}_{p,1})}\Big),$$
for $p\in[2,\min(4,2d/(d-2))]$ with $p\neq 4$ if $d=2$, which is the desired bound.
However, for the remaining term $\|T_{ f(a^\varepsilon)}\Delta (u^\varepsilon)^{h}\|^m_{L^1(\dot B^{\frac{d}{2}-1}_{2,1})}$, by arguing analogously to  the approach in \cite[Chapter 4]{Danchin-2018}, the smallest upper bound we can obtain is 
$$\varepsilon^{1-\delta}\|a^\varepsilon\|_{\widetilde L^\infty(\dot B^{\frac{d}{p}-1}_{p,1})}\|u^\varepsilon\|_{L^1(\dot B^{\frac{d}{p}+1}_{p,1})}.$$
Clearly, this bound is not the desired one. 
For the term $\|a^\varepsilon\dv u^\varepsilon\|^m_{L^1(\dot B^{\frac{d}{2}}_{2,1})}$, 
a similar problem also exists.
Consequently, it becomes difficult to obtain estimates for $((a^\varepsilon)^h,(u^\varepsilon)^h)$ in the critical $L^p$-framework.
\end{rem}

With the uniform-in-$\varepsilon$ global estimate in hand,
we justify the limiting process as $\varepsilon\rightarrow 0$. 
The global well-posedness of the limiting system \eqref{1.5}--\eqref{1.5.1} then follows from that of the 
Navier--Stokes equations in the critical Besov framework.
\begin{thm}[High Mach number limit]\label{Th2}
Let \(d\geq2\). Assume that
\begin{align*}
 a_0\in  {\dot B^{\frac d2-1}_{2,1}}\cap\dot B^{\frac d2}_{2,1},
\qquad
 u_0\in \dot B^{\frac d2-1}_{2,1}.   
\end{align*}
There exists a sufficiently small constant \(\delta_1>0\) such that if
\begin{align}\label{HM2.1}
\mathcal X_0:=  {\|a_0\|_{\dot B^{\frac d2-1}_{2,1}}}
+\|a_0\|_{\dot B^{\frac d2}_{2,1}}
+\| u_0\|_{\dot B^{\frac d2-1}_{2,1}}
\leq \delta_1,
\end{align}
then the following statements hold:
\begin{itemize}
\item There exists a family of approximate initial data $\{(a_0^\varepsilon, u_0^\varepsilon)\}_{0<\varepsilon<\varepsilon_0}$ such that $a_0^\varepsilon\to a_0$ in $ {\dot B^{\frac{d}{2}-1}_{2,1}}\cap\dot B^{\frac{d}{2}}_{2,1}$ and $ u_0^\varepsilon\to  u_0$ in $\dot B^{\frac{d}{2}-1}_{2,1}$ as $\varepsilon\to0$.
Moreover, for all \(0<\varepsilon<\varepsilon_0\), the system \eqref{1.3} with initial data $(a^\varepsilon, u^\varepsilon)|_{t=0}=
(a_0^\varepsilon, u_0^\varepsilon)$
admits a unique global strong solution
\((a^\varepsilon, u^\varepsilon)\), and the family satisfies the uniform
estimate \eqref{NJK1.14}.
 
\item As \(\varepsilon\to0\), there exists a pair of functions \((a, u)\) such that, up to
a subsequence,  
\begin{equation}\label{NZGG1.18}
\left\{
\begin{aligned}
&a^\varepsilon\rightharpoonup  a\,
\qquad\text{weakly-* in}\qquad
L_{\rm loc}^\infty(\mathbb R^+;\dot B^{\frac d2}_{2,1}),\\
& u^\varepsilon\rightharpoonup u
\qquad\text{weakly-* in}\qquad
L_{\rm loc}^\infty( \mathbb R^+;\dot B^{\frac d2-1}_{2,1}),\\
& u^\varepsilon\rightharpoonup  u
\qquad\hbox{weakly in}\qquad\,\,\,\,\,
L_{\rm loc}^1(\mathbb R+;\dot B^{\frac d2+1}_{2,1}),\\
&a^\varepsilon\to a\,
\qquad\text{strongly in}\qquad\,\,
\mathcal C_{\rm loc}(\mathbb R^+;L^2_{\rm loc} ),\\
& u^\varepsilon\to u
\qquad\text{strongly in}\qquad\,
L_{\rm loc}^2(\mathbb R^+;L^2_{\rm loc} ).
\end{aligned}
\right.
\end{equation}
\item The limit \((a, u)\) is the unique global strong solution to the perturbation system around $(1,0)$ of the system \eqref{1.5} with initial data $
(a, u)|_{t=0}=(a_0, u_0)$. 
Furthermore, $\rho=1+a>0$, and
\begin{align*}
a\in
\mathcal C(\mathbb R^+;\dot B^{\frac d2}_{2,1}),
\qquad
{u}\in
\mathcal C(\mathbb R^+;\dot B^{\frac d2-1}_{2,1})
\cap
L^1(\mathbb R^+;\dot B^{\frac d2+1}_{2,1}),   
\end{align*}
and, for all \(t\geq0\),
\begin{align}\label{HM2.2}
\|a\|_{\widetilde L_t^\infty(\dot B^{\frac d2}_{2,1})}
+
\|{u}\|_{\widetilde L_t^\infty(\dot B^{\frac d2-1}_{2,1})}
+
\|{u}\|_{L_t^1(\dot B^{\frac d2+1}_{2,1})}
\leq
C\mathcal{X}_0,
\end{align}
\end{itemize}
where \(C>0\) is independent of time and initial data.
\end{thm}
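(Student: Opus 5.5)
The plan is to reduce everything to Theorem \ref{Th1}: build approximate data whose size $\mathcal X_{\varepsilon,0}$ is controlled by $\mathcal X_0$, get uniform bounds, extract limits by compactness, and identify the limit.

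\textbf{Approximate data.} I would take $u_0^\varepsilon:=u_0$ and $a_0^\varepsilon:=a_0$, possibly truncated as $a_0^\varepsilon:=\sum_{|k|\le N_\varepsilon}\dot\Delta_k a_0$ with $N_\varepsilon\to\infty$ if regularity is needed for Theorem \ref{Th0}. Then $a_0^\varepsilon\to a_0$ in $\dot B^{\frac d2-1}_{2,1}\cap\dot B^{\frac d2}_{2,1}$ by the $\ell^1$ structure. The smallness follows from
\begin{align*}
\mathcal X_{\varepsilon,0}\le \varepsilon\|a_0\|_{\dot B^{\frac d2-1}_{2,1}}+\|u_0\|_{\dot B^{\frac d2-1}_{2,1}}+\|a_0\|_{\dot B^{\frac d2}_{2,1}}\le \mathcal X_0\le\delta_1 .
\end{align*}
Choosing $\delta_1\le\delta_0$, Theorem \ref{Th1} gives global solutions with $\mathcal X_\varepsilon(t)\le C\mathcal X_0$.

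\textbf{Uniform bounds in $\varepsilon$-free spaces.} For $a^\varepsilon$ in $L^\infty_t(\dot B^{\frac d2}_{2,1})$, the high and medium parts are contained in $\mathcal X_\varepsilon$. The low part is only bounded through $\varepsilon\|a^\varepsilon\|^\ell_{\dot B^{\frac d2-1}}$, and Bernstein \eqref{1.13.1} turns this into $\|a^\varepsilon\|^\ell_{\dot B^{\frac d2}}\lesssim \varepsilon\|a^\varepsilon\|^\ell_{\dot B^{\frac d2-1}}$. Hence
\begin{align*}
\|a^\varepsilon\|_{\widetilde L^\infty_t(\dot B^{\frac d2}_{2,1})}+\|u^\varepsilon\|_{\widetilde L^\infty_t(\dot B^{\frac d2-1}_{2,1})}+\|u^\varepsilon\|_{L^1_t(\dot B^{\frac d2+1}_{2,1})}\le C\mathcal X_0
\end{align*}
uniformly in $\varepsilon$ and $t$.

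\textbf{Compactness.} The weak-$*$ and weak limits then follow from Banach--Alaoglu. Note that $L^1$ is not a dual space, so for the $u$ weak-in-$L^1_{\rm loc}$ statement I would use Fatou-type lower semicontinuity blockwise, $\sum_k 2^{k(\frac d2+1)}\|\dot\Delta_k u\|_{L^1_T L^2}$, or interpret the convergence in the sense of distributions.

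For strong convergence I need time derivatives. From $\partial_t a^\varepsilon=-\dv u^\varepsilon-\dv(a^\varepsilon u^\varepsilon)$, the term $\partial_t a^\varepsilon$ is bounded in $L^2_T(\dot B^{\frac d2-1}_{2,1})$ using the interpolated bound on $u^\varepsilon$ in $L^2_T(\dot B^{\frac d2}_{2,1})$ and product laws. For $\partial_t u^\varepsilon$ the pressure term is the delicate one. I would write $\varepsilon^2\nabla a^\varepsilon=\varepsilon\nabla(\varepsilon a^\varepsilon)^\ell+\varepsilon^2\nabla (a^\varepsilon)^{m+h}$, which is $\mathcal O(\varepsilon)$ in $L^\infty_T(\dot B^{\frac d2-2}_{2,1}+\dot B^{\frac d2-1}_{2,1})$. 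The remaining terms are bounded in $L^1_T(\dot B^{\frac d2-1}_{2,1})$.

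With these bounds, Aubin--Lions plus the locally compact embeddings $\dot B^{\frac d2}_{2,1}\hookrightarrow L^2_{\rm loc}$ and $\dot B^{\frac d2-1}_{2,1}\cap\dot B^{\frac d2+1}_{2,1}$ into $H^1_{\rm loc}$, together with a diagonal argument over balls and time intervals, give the strong convergences in \eqref{NZGG1.18}. For $u^\varepsilon$ I use interpolation to get the uniform $L^2_T(\dot B^{\frac d2}_{2,1})$ bound. In $d=2$ the space $\dot B^{0}_{2,1}$ is in $L^2$, so the local embeddings still work.

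\textbf{Identification of the limit.} Pass to the limit in the weak formulation of \eqref{1.3}. The pressure term is $\varepsilon^2\langle a^\varepsilon,\dv\phi\rangle=\mathcal O(\varepsilon)$ by the bound above. The products $a^\varepsilon u^\varepsilon$ and $u^\varepsilon\cdot\nabla u^\varepsilon$ converge by strong-times-weak convergence. The term $f(a^\varepsilon)\Delta u^\varepsilon$ also converges, since $f(a^\varepsilon)\to f(a)$ strongly in $L^2_{\rm loc}$ (with $\|a^\varepsilon\|_{L^\infty}$ small, so $f$ is Lipschitz), while $\Delta u^\varepsilon$ converges weakly in $L^1_T(\dot B^{\frac d2-1})$, which suffices after a further local embedding.

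\textbf{Regularity, continuity and uniqueness.} The bounds \eqref{HM2.2} follow by lower semicontinuity. Positivity $\rho>0$ comes from $\|a\|_{L^\infty}\lesssim\|a\|_{\dot B^{\frac d2}_{2,1}}$ being small. Time continuity follows from the equations in the standard way via $\widetilde L^\infty$ norms. Uniqueness for the pressureless system is the main obstacle, because the density equation is a pure transport equation. I would estimate differences $(\delta a,\delta u)$ in $\dot B^{\frac d2-1}_{2,1}\times\dot B^{\frac d2-2}_{2,1}$ as in \cite{Danchin-IM-00,LNZ-2026-arxiv}, treating $\delta u$ via the Lam\'e estimate and $\delta a$ via transport estimates with loss. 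For $d=2$ I would use the logarithmic interpolation and Osgood lemma, and conclude via Gronwall. Uniqueness of the limit then upgrades subsequence convergence as stated.
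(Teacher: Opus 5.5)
Your proposal is correct and follows the paper's route: uniform bounds from Theorem \ref{Th1}, with the low frequencies of $a^\varepsilon$ in $\dot B^{\frac d2}_{2,1}$ recovered from $\varepsilon a^\varepsilon$ by Bernstein; bounds on $\partial_t a^\varepsilon$ and $\partial_t u^\varepsilon$; Aubin--Lions with a diagonal argument; passage to the limit and Fatou. Your constant family $(a_0^\varepsilon,u_0^\varepsilon)=(a_0,u_0)$ simplifies the paper's density argument, since it gives $\mathcal X_{\varepsilon,0}\le\mathcal X_0$ at once because $\varepsilon<1$, and your stability/Osgood sketch supplies the uniqueness that the paper's proof only asserts.

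One step needs repair: the limit in $f(a^\varepsilon)\mathcal A u^\varepsilon$. You rightly noted that bounded sets of $L^1_T$ are not weakly compact, so you cannot then use weak convergence of $\Delta u^\varepsilon$ in $L^1_T(\dot B^{\frac d2-1}_{2,1})$. Instead, write $f(a^\varepsilon)\Delta u^\varepsilon=\dv\big(f(a^\varepsilon)\nabla u^\varepsilon\big)-\nabla f(a^\varepsilon)\cdot\nabla u^\varepsilon$, and similarly for $\nabla\dv u^\varepsilon$. Then combine two facts: $\nabla u^\varepsilon$ converges strongly in $L^1_T(L^2_{\rm loc})$, by interpolating the $L^2_T(L^2_{\rm loc})$ convergence with the uniform $L^1_T(\dot B^{\frac d2+1}_{2,1})$ bound; and $\nabla f(a^\varepsilon)$ converges weakly-$*$ in $L^\infty_T(L^2_{\rm loc})$. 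The paper also leaves this step as ``straightforward'' without details.
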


The convergence achieved in Theorem \ref{Th2} is based on compactness and is thus qualitative. To further quantify the high Mach number limit from \eqref{1.3} to the perturbation system of   \eqref{1.5}, we establish error estimates between the scaled solution \((a^\varepsilon, u^\varepsilon)\) and the limiting solution \((a, u)\). More precisely, on any fixed time interval \([0,T]\), we demonstrate that an \(\mathcal{O}(\varepsilon)\) initial discrepancy results in an \(\mathcal{O}(\varepsilon)\) convergence rate in a lower-order critical Besov norm. The limitation to finite time intervals is due to the pressure remainder \(\varepsilon^2\nabla a^\varepsilon\), which cannot be globally controlled in time under the current uniform estimates for the density.

\begin{thm}[Finite-time high Mach number error estimate]\label{Th3}
Let \(d\geq3\), $(a^\varepsilon, u^\varepsilon)$
be the global strong solution to the scaled compressible Navier--Stokes system \eqref{1.3} obtained in Theorem \ref{Th1}, with initial data $(a^\varepsilon_0,u^\varepsilon_0)$, 
and $(1+a,{u})$ 
be the corresponding global strong solution to the limiting pressureless Navier--Stokes system \eqref{1.5} obtained in Theorem \ref{Th2},
with initial data $(a_0, u_0)$.
Assume that the smallness assumptions \eqref{NJK1.13} and \eqref{HM2.1} hold, and 
\begin{align}\label{HM3.1}
\|a^\varepsilon_0-a_0\|_{\dot B^{\frac d2-1}_{2,1}}
+
\|{u}^\varepsilon_0-{u}_0\|_{\dot B^{\frac d2-2}_{2,1}}
\leq \varepsilon .
\end{align}
Then, for any fixed \(T>0\), there exists a constant \(C_T>0\), independent of
\(\varepsilon\), such that
\begin{align}\label{HM3.2}
&\|a^\varepsilon-a\|_{\widetilde L_t^\infty
(\dot B^{\frac d2-1}_{2,1})}
+
\|{u}^\varepsilon-{u}\|_{\widetilde L_t^\infty
(\dot B^{\frac d2-2}_{2,1})}
+
\|{u}^\varepsilon-{u}\|_{L_t^1
(\dot B^{\frac d2}_{2,1})}\nonumber\\
&\quad+\|\partial_{t}({u}^\varepsilon-{u})\|_{  L_t^1
(\dot B^{\frac d2-2}_{2,1})}\leq
C_{T}\varepsilon,
\end{align}
for any $t\in[0,T]$.
\end{thm}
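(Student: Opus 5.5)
We estimate the differences $\delta a:=a^\varepsilon-a$ and $\delta u:=u^\varepsilon-u$ one derivative below the critical level. Subtracting the perturbed limiting system (\eqref{1.3} with $\varepsilon=0$) from \eqref{1.3} gives
\begin{equation*}
\left\{
\begin{aligned}
&\partial_t\delta a+u^\varepsilon\cdot\nabla\delta a=-\delta u\cdot\nabla a-\dv\delta u-\delta a\,\dv u^\varepsilon-a\,\dv\delta u,\\
&\partial_t\delta u-\mu\Delta\delta u-(\mu+\lambda)\nabla\dv\delta u=-\varepsilon^2\nabla a^\varepsilon-u^\varepsilon\cdot\nabla\delta u-\delta u\cdot\nabla u\\
&\qquad-\big(f(a^\varepsilon)-f(a)\big)\mathcal{A}u^\varepsilon-f(a)\mathcal{A}\delta u,
\end{aligned}
\right.
\end{equation*}
where $\mathcal{A}:=\mu\Delta+(\mu+\lambda)\nabla\dv$. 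We do not use a frequency splitting for the difference. The source $\varepsilon^2\nabla a^\varepsilon$ is already $O(\varepsilon)$ on $[0,T]$, and everything else is linear in $(\delta a,\delta u)$ with coefficients controlled by Theorems \ref{Th1} and \ref{Th2}.

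\emph{Step 1: the pressure remainder.} We need $\|\varepsilon^2\nabla a^\varepsilon\|_{L^1_t(\dot B^{\frac d2-2}_{2,1})}\lesssim\varepsilon^2\|a^\varepsilon\|_{L^1_t(\dot B^{\frac d2-1}_{2,1})}$. On low frequencies we use $\varepsilon\|a^\varepsilon\|^\ell_{\widetilde L^\infty_t(\dot B^{\frac d2-1}_{2,1})}\le C\mathcal X_{\varepsilon,0}$ from \eqref{NJK1.14}, which bounds this part by $C\varepsilon t\,\mathcal X_{\varepsilon,0}$. On medium and high frequencies, \eqref{1.13.2} and \eqref{1.13.1} give $\|a^\varepsilon\|^{m+h}_{\dot B^{\frac d2-1}_{2,1}}\lesssim\varepsilon^{-1}\|a^\varepsilon\|^{m+h}_{\dot B^{\frac d2}_{2,1}}$. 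Hence this part is at most $C\varepsilon t\,\|a^\varepsilon\|^{m+h}_{\widetilde L^\infty_t(\dot B^{\frac d2}_{2,1})}\le C\varepsilon t\,\mathcal X_{\varepsilon,0}$. Altogether the remainder is $O(\varepsilon T)$, and the growth in $T$ is exactly why the estimate is only local in time.

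\emph{Step 2: Lamé estimate for $\delta u$.} Apply the maximal regularity estimate in $\widetilde L^\infty_t(\dot B^{\frac d2-2}_{2,1})\cap L^1_t(\dot B^{\frac d2}_{2,1})$; the time-derivative bound in \eqref{HM3.2} then follows from the equation. The nonlinear terms are handled with product laws. We use $\dot B^{\frac d2}_{2,1}\times\dot B^{\frac d2-2}_{2,1}\to\dot B^{\frac d2-2}_{2,1}$, valid when $\frac d2-2>-\frac d2$, i.e.\ $d\ge3$; this is where $d\geq3$ enters. We also use $\dot B^{\frac d2-1}_{2,1}\times\dot B^{\frac d2-1}_{2,1}\to\dot B^{\frac d2-2}_{2,1}$. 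With these:
\begin{itemize}
\item $f(a)\mathcal{A}\delta u$ is bounded by $\|a\|_{\dot B^{\frac d2}_{2,1}}\|\delta u\|_{L^1_t(\dot B^{\frac d2}_{2,1})}$, which is absorbed because $\delta_1$ is small.
\item $(f(a^\varepsilon)-f(a))\mathcal A u^\varepsilon$ is bounded by $\|\delta a\|_{\dot B^{\frac d2-1}_{2,1}}\|u^\varepsilon\|_{\dot B^{\frac d2+1}_{2,1}}$. This uses the product law $\dot B^{\frac d2-1}\times\dot B^{\frac d2-1}$ together with the composition estimate for $f(a^\varepsilon)-f(a)$ in $\dot B^{\frac d2-1}_{2,1}$, which needs $a,a^\varepsilon\in\dot B^{\frac d2}_{2,1}$.
\item The convection terms are bounded by $\|u^\varepsilon\|_{\dot B^{\frac d2}_{2,1}}\|\delta u\|_{\dot B^{\frac d2-1}_{2,1}}$ and $\|\delta u\|_{\dot B^{\frac d2-1}_{2,1}}\|u\|_{\dot B^{\frac d2}_{2,1}}$. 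We interpolate $\delta u$ between $\dot B^{\frac d2-2}_{2,1}$ and $\dot B^{\frac d2}_{2,1}$, and the $u^\varepsilon$, $u$ factors are integrable in time by Theorems \ref{Th1}--\ref{Th2}.
\end{itemize}
Two further inputs are needed. The bound $\|a^\varepsilon\|_{L^\infty_t(\dot B^{\frac d2}_{2,1})}\lesssim\delta_0$ requires the low frequencies as well: $\|a^\varepsilon\|^\ell_{\dot B^{\frac d2}}\lesssim\varepsilon\|a^\varepsilon\|^\ell_{\dot B^{\frac d2-1}}$ by \eqref{1.13.1}. For the $L^\infty_t$ part, $\|u^\varepsilon\|_{L^2_t(\dot B^{\frac d2}_{2,1})}$ follows from interpolating the bounds in \eqref{NJK1.15}.

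\emph{Step 3: transport estimate for $\delta a$, and closing.} Estimate $\delta a$ in $\widetilde L^\infty_t(\dot B^{\frac d2-1}_{2,1})$ using commutator estimates with $\nabla u^\varepsilon\in L^1_t(\dot B^{\frac d2}_{2,1})$. The sources are
\begin{itemize}
\item $\dv\delta u$ in $L^1_t(\dot B^{\frac d2-1}_{2,1})$, which is controlled by $\|\delta u\|_{L^1_t(\dot B^{\frac d2}_{2,1})}$;
\item $\delta u\cdot\nabla a$ and $a\dv\delta u$, bounded by $\|a\|_{\dot B^{\frac d2}_{2,1}}\|\delta u\|_{\dot B^{\frac d2}_{2,1}}$;
\item $\delta a\,\dv u^\varepsilon$, bounded by $\|\delta a\|_{\dot B^{\frac d2-1}_{2,1}}\|u^\varepsilon\|_{\dot B^{\frac d2+1}_{2,1}}$.
\end{itemize}
The main obstacle is the linear coupling: $\dv\delta u$ enters the $\delta a$ equation with an $O(1)$ coefficient, while $\delta a$ enters the momentum equation only through the small term $(f(a^\varepsilon)-f(a))\mathcal A u^\varepsilon$. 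We therefore weight the two estimates. Let $E(t)$ be the left-hand side of \eqref{HM3.2}. Adding a small multiple $\eta$ of the $\delta a$ estimate to the $\delta u$ estimate, or better, running a Gronwall argument on short intervals, we obtain
\begin{align*}
E(t)\le C\Big(\varepsilon+\varepsilon T+\int_0^t V(\tau)\,E(\tau)\,d\tau\Big)+C(\delta_0+\delta_1)E(t),
\end{align*}
with $V:=\|u^\varepsilon\|_{\dot B^{\frac d2+1}_{2,1}}+\|u\|_{\dot B^{\frac d2+1}_{2,1}}+\|u^\varepsilon\|^2_{\dot B^{\frac d2}_{2,1}}+\|u\|^2_{\dot B^{\frac d2}_{2,1}}$. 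The time integral of $V$ is bounded uniformly in $\varepsilon$ by \eqref{NJK1.14} and \eqref{HM2.2}. Gronwall's lemma then gives $E(t)\le C_T\varepsilon$, with the initial term controlled by \eqref{HM3.1}.
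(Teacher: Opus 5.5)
Your proposal is correct and follows the same route as the paper: Lamé maximal regularity for $\delta u$ in $\widetilde L^\infty_t(\dot B^{\frac d2-2}_{2,1})\cap L^1_t(\dot B^{\frac d2}_{2,1})$ (where $d\ge3$ enters through the product laws), the transport estimate for $\delta a$ in $\dot B^{\frac d2-1}_{2,1}$, an $O(\varepsilon T)$ bound on the pressure remainder, and absorption using the smallness of $\delta_0,\delta_1$. Your deviations are refinements rather than a different method: you put the limit coefficient $a$ in front of $\mathcal A\delta u$, you treat convection by interpolation plus Gronwall, you split the pressure remainder into low and medium/high frequencies (which the paper's bound $\varepsilon T\|\varepsilon a^\varepsilon\|_{\widetilde L^\infty_t(\dot B^{\frac d2-1}_{2,1})}$ uses only implicitly), and you handle explicitly the $O(1)$ coupling through $\dv\delta u$, which \eqref{G5.3}--\eqref{G5.4} loosely fold into $(\delta_0+\delta_1)\delta\mathcal X$.
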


\begin{rem}
Theorem \ref{Th3} is formulated for finite-time intervals primarily due to the pressure remainder $\varepsilon^2\nabla a^\varepsilon$ in the velocity error equation. In the lower-order error estimate, this term is controlled as follows:
\begin{align*} 
\varepsilon^2\|\nabla a^\varepsilon\|_{L_T^1(\dot B^{\frac d2 - 2}_{2,1})}\lesssim\varepsilon T\|\varepsilon a^\varepsilon\|_{\widetilde L_T^\infty(\dot B^{\frac d2 - 1}_{2,1})}\lesssim C_T\varepsilon.   
\end{align*}
Consequently, the constant depends on \(T\). A global-in-time estimate would necessitate a uniform bound of the form
\begin{align*}
\|\varepsilon a^\varepsilon\|_{L_t^1(\dot B^{\frac d2 - 1}_{2,1})}\leq C,\qquad t\geq0,    
\end{align*}
for a uniform constant $C > 0$. However, such a bound is not provided by Theorem \ref{Th1} because the density equation lacks intrinsic dissipation. We also confine the estimate to \(d\geq3\). After reducing one derivative, the case \(d = 2\) becomes the endpoint $\dot B^0_{2,1}\times\big(\dot B^{-1}_{2,1}\cap L_T^1\dot B^1_{2,1}\big)$, where the standard \(\ell^1\)-Besov stability estimate fails to yield a Lipschitz error bound. Therefore, the current   \(\mathcal{O}(\varepsilon)\) estimate is proved only for \(d\geq3\).
\end{rem}

\subsection{Strategies of the proofs}
We first discuss the proof of the local well-posedness result in Theorem \ref{Th0}. For each fixed \(\varepsilon>0\), the scaled compressible Navier--Stokes system \eqref{1.3}--\eqref{1.4} is regarded as a quasilinear transport-parabolic system. The local construction is based on Danchin's Eulerian iteration scheme \cite[Section 3]{Danchin-2018}. Although the pressure term \(\varepsilon^2\nabla a^\varepsilon\) has no significant impact for a fixed \(\varepsilon>0\), the local construction needs to keep track of the  lower-order quantity \(\varepsilon a^\varepsilon\), which explains why the local norm is not only established on
\begin{align*}
a^\varepsilon\in \widetilde L^\infty_T(\dot B^{\frac d2}_{2,1}),\qquad {u}^\varepsilon\in \widetilde L^\infty_T(\dot B^{\frac d2 - 1}_{2,1})\cap L^1_T(\dot B^{\frac d2+1}_{2,1}),
\end{align*}
but also includes the additional lower-order component
\begin{align*}
\varepsilon a^\varepsilon\in\widetilde L^\infty_T(\dot B^{\frac d2 - 1}_{2,1})\cap L^1_T(\dot B^{\frac d2+1}_{2,1}+\dot B^{\frac d2}_{2,1}).
\end{align*}
Based on the approximate system \eqref{G3.2}, we first propagate \(a_n^\varepsilon\) in the critical space 
\(\widetilde L^\infty_{T_{\varepsilon}}(\dot B^{\frac d2}_{2,1})\) by means of the transport estimate, which also ensures the uniform lower bound of \(1+a_n^\varepsilon\), such that  \(f(a_n^\varepsilon)\) is well-defined.
Then we estimate \(u_n^\varepsilon\) by decomposing it into the free Lam\'{e} evolution and a remainder term. 
The free part is made small by choosing the local time \(T_\varepsilon\) sufficiently small, while the remainder is controlled through the maximal regularity estimate.
After the velocity estimates are obtained, we return to the density equation at one lower regularity level and derive the bound for \(\varepsilon a_n^\varepsilon\). 

Compared with the classical local theory in \cite[Section 3]{Danchin-2018}, the main new point is the presence of the scaled pressure force \(\varepsilon^2\nabla a^\varepsilon\). The argument is not only the standard transport-Lam\'{e} iteration: one has to combine the critical estimate for \(a_n^\varepsilon\), the parabolic estimate for \(u_n^\varepsilon\), and the lower-order weighted estimate for \(\varepsilon a_n^\varepsilon\). 
Once these uniform bounds of $(a^\varepsilon_{n},u_{n}^\varepsilon)$ are obtained, the convergence of the approximate sequence follows from lower-order stability estimates for \(d\ge3\), while the endpoint case \(d=2\) is treated by the usual compactness argument. 
Uniqueness is then proved by the corresponding lower-order stability estimate, and time continuity in the critical spaces follows from a standard low-medium-high frequency decomposition.

For the proof of Theorem \ref{Th1}, spectral analysis reveals that the behavior of the solution differs in the two frequency regimes: $|\xi|>\varepsilon$ and $|\xi|<\varepsilon$, respectively. In the low-frequency regime $|\xi|\leq \varepsilon$, we perform a basic $L^2$ estimate for $(a^\varepsilon,u^\varepsilon)$. Since we obtain an estimate for $\varepsilon a^\varepsilon$ in $\widetilde L^\infty_T(\dot B^{\frac{d}{2}-1}_{2,1})$, rather than for $a^\varepsilon$ in $\widetilde L^\infty_T(\dot B^{\frac{d}{2}}_{2,1})$, we need to be careful when estimating the nonlinear terms involving the density. In particular, we  repeatedly use  the following inequality: 
$$
\|a^\varepsilon\|_{\dot B^{\frac{d}{2}}_{2,1}}^\ell\lesssim \varepsilon\|a^\varepsilon\|_{\dot B^{\frac{d}{2}-1}_{2,1}}^\ell,
$$
which is specific to the low-frequency regime. 
For the frequency regime $|\xi|>\varepsilon$, we attempt to estimate $(a^\varepsilon,u^\varepsilon)$ by using the effective velocity. Unfortunately, we find that the threshold $2^{J_0^\varepsilon}$ tends to $ 0$  (and hence becomes infinitesimally small) as $\varepsilon$ approaches zero. As a result, it is impossible to select a sufficiently large threshold such that the following two terms are absorbed: 
$$
\varepsilon^2\| \omega^\varepsilon\|^{m+h}_{L^1_t(\dot B^{\frac{d}{2}-1}_{2,1})}\quad \text{and} \quad\varepsilon^4\|\nabla(-\Delta)^{-1}a^\varepsilon\|^{m+h}_{L^1_t(\dot B^{\frac{d}{2}-1}_{2,1})}.
$$
To overcome this obstacle, we divide the frequency regime $|\xi|> \varepsilon$ into two parts: $\varepsilon<|\xi|\leq \varepsilon^{1-\delta_c}$ (medium-frequency) and $|\xi|>\varepsilon^{1-\delta_c}$ (high-frequency). In Lemma \ref{L4.3}, we obtain the estimates of $(a^\varepsilon,u^\varepsilon)$ in the medium-frequency part by taking the basic $L^2$ estimate. 
However, we 
only derive the dissipative estimate $\varepsilon^2\|u^\varepsilon\|^m_{ L^1_t(\dot B^{\frac{d}{2}+1})}$.
Since the dissipative structure of $u^\varepsilon$ is preserved, the velocity field should admit the estimate $\|u^\varepsilon\|^m_{ L^1_t(\dot B^{\frac{d}{2}+1})}$ (without the $\varepsilon^2$ factor). 
We utilize the optimal regularity estimate in Lemma \ref{Lemma2.6} to achieve this objective.
For the high-frequency part, we evaluate  $(a^\varepsilon,u^\varepsilon)$ by introducing the effective velocity. Since our goal is    to obtain uniform estimates with respect to $\varepsilon$, we construct the following effective velocity:
$$\omega^\varepsilon=\nabla(-\Delta)^{-1}(\frac{\varepsilon^2a^\varepsilon}{2\mu+\lambda}-\dv u^\varepsilon),$$ which is different from that in \cite{Danchin-2018}.
In particular, the terms $$
\varepsilon^2\| \omega^\varepsilon\|^{h}_{L^1_t(\dot B^{\frac{d}{2}-1}_{2,1})}\quad \text{and} \quad\varepsilon^4\|\nabla(-\Delta)^{-1}a^\varepsilon\|^{h}_{L^1_t(\dot B^{\frac{d}{2}-1}_{2,1})}
$$
can be bounded by 
$\varepsilon^{2\delta_{c}}\|\omega^\varepsilon\|^h_{L^1_t(\dot B^{\frac{d}{2}+1}_{2,1})}$ and $\varepsilon^{2+2\delta_{c}}\|a^\varepsilon\|^h_{L^1_t(\dot B^{\frac{d}{2}}_{2,1})}$,
respectively. Here, $\varepsilon^{2\delta_{c}}$ provides smallness, thus enabling the above terms to be absorbed.

Finally, we present the main ideas behind the proofs of Theorems \ref{Th2} and \ref{Th3}. The proof of Theorem \ref{Th2} is based on compactness arguments and the uniform estimates established in Theorem \ref{Th1}. Starting from the initial data of the limiting pressureless system \eqref{1.5}–\eqref{1.5.1},
we first choose a family of approximating data \((a_0^\varepsilon, u_0^\varepsilon)\) converging to \((a_0, u_0)\) in the critical Besov space and satisfying the smallness condition required by Theorem \ref{Th1}. The corresponding global solutions \((a^\varepsilon, u^\varepsilon)\) then enjoy uniform bounds independent of both \(t\) and \(\varepsilon\), which yield weak compactness in the critical spaces.
To obtain sufficient compactness for the nonlinear terms, we further   control $\partial_t a^\varepsilon$ and $\partial_t u^\varepsilon$ in lower-order Besov spaces. The Aubin--Lions lemma and a diagonal argument then yield strong local convergence of $a^\varepsilon$ and $u^\varepsilon$. This strong convergence allows us to pass to the limit in the transport and viscous nonlinearities, while the pressure term vanishes due to the weighted estimate on $\varepsilon a^\varepsilon$. Consequently, the limit satisfies the pressureless Navier--Stokes system \eqref{1.5}--\eqref{1.5.1}.

Theorem \ref{Th3} gives  the convergence rate of the above limiting process on any fixed time interval. Instead of using compactness, we subtract the scaled system and the pressureless system and estimate the error:
\begin{align*}
 \delta a:=a^\varepsilon-a,\qquad 
\delta u:= u^\varepsilon- u .  
\end{align*}
in a lower-order critical Besov framework. 
The main new forcing term is the pressure defect \(\varepsilon^2\nabla a^\varepsilon\), which is estimated by the parameter-dependent bound for \(\varepsilon a^\varepsilon\) and contributes only \(\mathcal{O}(\varepsilon)\) on finite time intervals. 
All remaining nonlinear difference terms are linear in \((\delta a,\delta {u})\) and are controlled by the uniform smallness of the scaled and limiting solutions. 
This yields an inequality of the form (see \eqref{G5.4}):
\begin{align*}
\delta\mathcal X(t)
\le
C_T\Big(
\|a_0^\varepsilon-a_0\|_{\dot B^{\frac d2-1}_{2,1}}
+
\|u_0^\varepsilon-u_0\|_{\dot B^{\frac d2-2}_{2,1}}
+
\varepsilon
+(\delta_0+\delta_1)\delta\mathcal X(t) \Big).    
\end{align*}
Hence, if the initial discrepancy is of the order \(\mathcal{O}(\varepsilon)\), then the lower-order critical Besov error remains of the order \(\mathcal{O}(\varepsilon)\) on every finite time interval, which proves the  finite-time error estimate and completes the proof of Theorem \ref{Th3}.

\subsection{Outline of this paper}
The rest of this paper is organized as follows. 
In Section 2, we recall some notations used throughout this paper, 
as well as the product and commutator estimates. 
In Section 3, we establish the local well-posedness of the scaled compressible Navier–Stokes system \eqref{1.3}–\eqref{1.4}. 
Section 4 is devoted to proving uniform global \emph{a priori} estimates in an $L^2$-type Besov framework.
More precisely, by introducing two $\varepsilon$-dependent frequency thresholds of order $\mathcal{O}(\varepsilon)$ and order $\mathcal{O}(\varepsilon^{1-\delta_c})$, we decompose the solution into low-, medium-, and high-frequency parts. 
The low- and medium-frequency parts are treated by the refined energy methods, while the high-frequency part is handled by an effective velocity-variable method.
Finally, we justify the high Mach number limit and prove the convergence rate of solutions to the pressureless Navier--Stokes system \eqref{1.5}--\eqref{1.5.1} as $\varepsilon \to 0$.

\section{Preliminaries}
\subsection{Notations}

Throughout this paper, \(C>0\) denotes a generic harmless constant, which may change from line to line. We use the notation \(u\lesssim v\) to mean \(u\le Cv\), and \(u\approx v\) to mean \(u\lesssim v\) and \(v\lesssim u\). For a Banach space \(X\) and \(u,v\in X\), we set $\|(u,v)\|_X:=\|u\|_X+\|v\|_X$. 
For \(T>0\) and \(\varkappa\in[1,\infty]\), we write $
L_T^\varkappa(X):=L^\varkappa([0,T];X)$ 
for the space of measurable functions \(u:[0,T]\to X\) such that \(t\mapsto \|u(t)\|_X\) belongs to \(L^\varkappa(0,T)\). Finally, \(\mathcal C([0,T];X)\) stands for the space of continuous \(X\)-valued functions on \([0,T]\).

\subsection{Analysis tools in Besov spaces}
We first introduce the following embedding and interpolation inequalities.
\begin{lem}\label{LA.2}{\rm(\!\!\cite[Chapter 2]{BCD-Book-2011})}
The following properties hold:
\begin{itemize}
\item{} For $s\in\mathbb{R}$, $1\leq p_{1}\leq p_{2}\leq \infty$ and $1\leq r_{1}\leq r_{2}\leq \infty$, it holds
\begin{equation}\notag
\begin{aligned}
\dot{B}^{s}_{p_{1},r_{1}}\hookrightarrow \dot{B}^{s-d (\frac{1}{p_1} -\frac{1}{p_2})}_{p_{2},r_{2}}.
\end{aligned}
\end{equation}
\item{} For $1\leq p\leq q\leq\infty$, we have the following chain of continuous embedding:
\begin{equation}\nonumber
\begin{aligned}
  \dot{B}^{0}_{p,1}\hookrightarrow L^{p}\hookrightarrow \dot{B}^{0}_{p,\infty}\hookrightarrow \dot B_{q,\infty}^{\varrho},\quad \varrho=-d\Big(\frac{1}{p}-\frac{1}{q}\Big).
\end{aligned}
\end{equation}
\item{} If $p<\infty$, then $\dot{B}^{\frac{d}{p}}_{p,1}$ is continuously embedded in the set of continuous functions decaying to 0 at infinity.

\item{}  The following real interpolation property is satisfied for $1\leq p\leq \infty$, $s_1<s_2$, and $\theta\in (0,1)$:
\begin{align}\label{A.1}
\|g\|_{\dot B_{p,1}^{\theta s_1+(1-\theta)s_2}}\lesssim \frac{1}{\theta(1-\theta)(s_2-s_1)}\|g\|_{\dot B_{p,\infty}^{s_1}}^{\theta} \|g\|_{\dot B_{p,1}^{s_2}}^{1-\theta}  .  
\end{align}

\end{itemize}
\end{lem}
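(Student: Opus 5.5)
These are the standard properties of homogeneous Besov spaces, quoted from \cite[Chapter 2]{BCD-Book-2011}. I would recover each of them from the Littlewood--Paley characterization
\begin{align*}
\|g\|_{\dot B^s_{p,r}}=\big\|\{2^{ks}\|\dot\Delta_k g\|_{L^p}\}_{k\in\mathbb Z}\big\|_{\ell^r},
\end{align*}
combined with Bernstein's inequality: for a function spectrally supported in an annulus of size $2^k$, one has $\|\dot\Delta_k g\|_{L^{p_2}}\lesssim 2^{kd(\frac1{p_1}-\frac1{p_2})}\|\dot\Delta_k g\|_{L^{p_1}}$.

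\textbf{First embedding.} Multiplying the Bernstein bound by $2^{k(s-d(\frac1{p_1}-\frac1{p_2}))}$ gives the $L^{p_2}$ part at regularity $s-d(\frac1{p_1}-\frac1{p_2})$. The inclusion $\ell^{r_1}\subset\ell^{r_2}$ then handles the change of the third index.

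\textbf{Chain of embeddings.}
\begin{itemize}
\item $\dot B^0_{p,1}\hookrightarrow L^p$: sum $g=\sum_k\dot\Delta_k g$ in $L^p$ and apply the triangle inequality.
\item $L^p\hookrightarrow\dot B^0_{p,\infty}$: use $\|\dot\Delta_k g\|_{L^p}\lesssim\|g\|_{L^p}$, which holds because the kernels of $\dot\Delta_k$ are uniformly bounded in $L^1$ (Young's inequality).
\item $\dot B^0_{p,\infty}\hookrightarrow\dot B^\varrho_{q,\infty}$: this is the first item with $r_1=r_2=\infty$.
\end{itemize}

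\textbf{Continuity and decay of $\dot B^{d/p}_{p,1}$.} Apply the first item with $p_2=\infty$ to get $\dot B^{d/p}_{p,1}\hookrightarrow\dot B^0_{\infty,1}$. Each block $\dot\Delta_k g$ is smooth and lies in $L^p$ with bounded derivatives, so it tends to $0$ at infinity. The series $\sum_k\dot\Delta_k g$ converges uniformly because its $L^\infty$ norms are summable. A uniform limit of continuous functions vanishing at infinity has the same property.

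The one subtlety, and the point I expect to need care, is that $g\in\mathcal S'_h$. This guarantees that the low-frequency sum converges to $g$ itself rather than to $g$ plus a polynomial.

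\textbf{Interpolation inequality \eqref{A.1}.} This is the only step with real computation. Set $s=\theta s_1+(1-\theta)s_2$ and split the sum at an integer $N$ to be chosen:
\begin{align*}
\sum_{k\le N}2^{ks}\|\dot\Delta_kg\|_{L^p}&\le\|g\|_{\dot B^{s_1}_{p,\infty}}\sum_{k\le N}2^{k(s-s_1)},\\
\sum_{k>N}2^{ks}\|\dot\Delta_kg\|_{L^p}&\le 2^{-N(s_2-s)}\|g\|_{\dot B^{s_2}_{p,1}}.
\end{align*}
The geometric sum in the first line is bounded by $2^{N(s-s_1)}/\big(1-2^{-(s-s_1)}\big)\lesssim 2^{N(s-s_1)}/(s-s_1)$. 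Since $s-s_1=(1-\theta)(s_2-s_1)$, this produces the factor $\frac{1}{(1-\theta)(s_2-s_1)}$.

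The high-frequency part can be kept with the $\ell^1$ norm as written, with no loss. Alternatively, bounding it by $\ell^\infty$ and a geometric sum would give the symmetric factor $\frac1{\theta(s_2-s_1)}$. Either way the product of the two factors is dominated by $\frac1{\theta(1-\theta)(s_2-s_1)}$.

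Finally, choose $N$ so that
\begin{align*}
2^{N(s_2-s_1)}\approx\|g\|_{\dot B^{s_2}_{p,1}}/\|g\|_{\dot B^{s_1}_{p,\infty}}.
\end{align*}
This balances the two pieces. Because $s-s_1=(1-\theta)(s_2-s_1)$ and $s_2-s=\theta(s_2-s_1)$, both terms become comparable to $\|g\|_{\dot B^{s_1}_{p,\infty}}^{\theta}\|g\|_{\dot B^{s_2}_{p,1}}^{1-\theta}$. The integer rounding of $N$ costs only an absolute constant, which completes \eqref{A.1}.
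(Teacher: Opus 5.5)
The paper gives no proof of this lemma (it is quoted from the Bahouri--Chemin--Danchin book). Your Bernstein/Littlewood--Paley arguments for the first three items are the standard ones and are fine. One small addition: for $L^p\hookrightarrow\dot B^0_{p,\infty}$ you also need $L^p\subset\mathcal S'_h$. This holds for $p<\infty$, since $\|\dot S_jg\|_{L^\infty}\lesssim 2^{jd/p}\|g\|_{L^p}\to0$ as $j\to-\infty$. It fails for $p=\infty$ because of constants, so that link has to be read on $L^\infty\cap\mathcal S'_h$.

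The genuine problem is the constant in \eqref{A.1}.

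\textbf{The false step.} Your step $\frac{1}{1-2^{-(s-s_1)}}\lesssim\frac{1}{s-s_1}$ is false when $s-s_1$ is large. In general one only has $\frac{1}{1-2^{-x}}=1+\frac{1}{2^x-1}\le 1+\frac{1}{x\ln 2}$. The same hidden assumption sits behind two other claims:
\begin{itemize}
\item that the $\ell^1$ tail, which carries constant $1$, is $\lesssim\frac{1}{\theta(1-\theta)(s_2-s_1)}$;
\item that rounding $N$ costs only an absolute constant (with your tail bound $2^{-N(s_2-s)}$ it can cost $2^{\theta(s_2-s_1)}$).
\end{itemize}
Also, the low- and high-frequency contributions add; they are not multiplied.

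\textbf{Why no argument can fix it.} \eqref{A.1} as written is false for large $s_2-s_1$. With $\varphi$ supported in $\{3/4\le|\xi|\le 8/3\}$, take $g\in\mathcal S$ with $\hat g$ supported in $\{4/3<|\xi|<3/2\}$. On that set $\varphi\equiv1$ and $\varphi(2^{-k}\cdot)\equiv0$ for $k\neq0$. Hence $\dot\Delta_0g=g$ and $\dot\Delta_kg=0$ for $k\ne0$, so every $\dot B^\sigma_{p,r}$ norm of $g$ equals $\|g\|_{L^p}$. Then \eqref{A.1} would force $\theta(1-\theta)(s_2-s_1)\lesssim1$.

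\textbf{What your splitting actually proves.} Take $N$ maximal with $2^{N(s_2-s_1)}\le\|g\|_{\dot B^{s_2}_{p,1}}/\|g\|_{\dot B^{s_1}_{p,\infty}}$. Bound the tail $k\ge N+1$ by $2^{-(N+1)(s_2-s)}\|g\|_{\dot B^{s_2}_{p,1}}$. This gives
\[
\|g\|_{\dot B^{s}_{p,1}}\le\Big(2+\frac{1}{(1-\theta)(s_2-s_1)\ln 2}\Big)\|g\|^{\theta}_{\dot B^{s_1}_{p,\infty}}\|g\|^{1-\theta}_{\dot B^{s_2}_{p,1}},
\]
and this constant is $\lesssim\frac{1}{\theta(1-\theta)(s_2-s_1)}$ only when $s_2-s_1$ is bounded above. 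That restricted form covers every use in the paper, for instance the interpolation between $\dot B^{\frac d2-1}_{2,1}$ and $\dot B^{\frac d2+1}_{2,1}$ with $s_2-s_1=2$ and $\theta=\frac12$. You should state and prove the item in that form.
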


To control the nonlinear terms, we require the following Moser-type product estimates in Besov spaces:
\begin{lem} \label{LA.3}{\rm(\!\!\cite[Chapter 2]{BCD-Book-2011})}
The following statements hold:
\begin{itemize}
\item{} Let $s>0$, $1\leq p,r\leq \infty$.  Then $\dot B^{s}_{p,r}\cap L^\infty$ is an algebra and
 \begin{align}\label{A.2}
\|gh\|_{\dot B_{p,r}^s}\lesssim \|g\|_{L^\infty}\|h\|_{\dot B_{p,r}^s}+\|h\|_{L^\infty}\|g\|_{\dot B_{p,r}^s}.    
 \end{align}

\item{} Let $s_1,s_2>0$ and $p$ satisfy $2\leq p\leq \infty$, $s_1\leq \frac{d}{p}$, $s_2\leq \frac{d}{p}$, and $s_1+s_2>0$.  Then it holds 
 \begin{align}\label{A.3}
\|gh\|_{\dot B_{p,1}^{s_1+s_2-\frac{d}{p}}}\lesssim \|g\|_{ \dot B_{p,1}^{s_1 }}\|h\|_{\dot B_{p,1}^{s_2}} .    
 \end{align}

\item{} Let $s_1,s_2>0$ and $p$ satisfy $2\leq p\leq \infty$, $s_1\leq \frac{d}{p}$, $s_2<\frac{d}{p}$, and $s_1+s_2\geq0$.  Then it holds 
 \begin{align}\label{A.4}
\|gh\|_{\dot B_{p,\infty}^{s_1+s_2-\frac{d}{p}}}\lesssim \|g\|_{ \dot B_{p,1}^{s_1 }}\|h\|_{\dot B_{p,\infty}^{s_2}} .    
 \end{align}
    \end{itemize}
\end{lem}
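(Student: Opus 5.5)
The statement just above is Lemma \ref{LA.3}, which collects standard product estimates; I would prove each bullet with Bony's paraproduct decomposition
\begin{align*}
gh=T_gh+T_hg+R(g,h),
\end{align*}
where $T_gh=\sum_k \dot S_{k-1}g\,\dot\Delta_k h$ and $R(g,h)=\sum_{|k-k'|\le1}\dot\Delta_k g\,\dot\Delta_{k'}h$. I would then use the classical continuity properties of $T$ and $R$ from \cite[Chapter 2]{BCD-Book-2011}.

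\textbf{First bullet (\eqref{A.2}).} The paraproduct satisfies $\|T_gh\|_{\dot B^s_{p,r}}\lesssim\|g\|_{L^\infty}\|h\|_{\dot B^s_{p,r}}$ for every $s$, because each block $\dot S_{k-1}g\,\dot\Delta_kh$ is spectrally supported in an annulus of size $2^k$ and $\|\dot S_{k-1}g\|_{L^\infty}\lesssim\|g\|_{L^\infty}$. By symmetry, $T_hg$ is bounded by $\|h\|_{L^\infty}\|g\|_{\dot B^s_{p,r}}$. The remainder $R(g,h)$ is spectrally supported only in balls of size $2^k$. This is where $s>0$ is needed: the sum $\sum_{k\ge j-N}2^{(j-k)s}\,2^{ks}\|\dot\Delta_kg\|_{L^p}$ converges, so $R$ is bounded by $\|h\|_{L^\infty}\|g\|_{\dot B^s_{p,r}}$. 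The algebra property follows once one checks convergence of the series in $\mathcal S'_h$.

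\textbf{Second bullet (\eqref{A.3}).} Set $s=s_1+s_2-\frac dp$. For the paraproduct I would use Bernstein to get
\begin{align*}
\|\dot S_{k-1}g\|_{L^\infty}\lesssim\sum_{k'\le k-2}2^{k'(\frac dp-s_1)}\,2^{k's_1}\|\dot\Delta_{k'}g\|_{L^p}.
\end{align*}
When $s_1<\frac dp$, the factor $2^{k'(\frac dp-s_1)}\le 2^{k(\frac dp-s_1)}$ produces the weight $2^{k(\frac dp-s_1)}$. When $s_1=\frac dp$, the bound is simply $\|g\|_{\dot B^{d/p}_{p,1}}\hookrightarrow L^\infty$, and here the $\ell^1$ summability is essential. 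This gives $\|T_gh\|_{\dot B^{s}_{p,1}}\lesssim\|g\|_{\dot B^{s_1}_{p,1}}\|h\|_{\dot B^{s_2}_{p,1}}$. The term $T_hg$ is handled symmetrically using $s_2\le\frac dp$.

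For the remainder I would use $p\ge2$. Each product $\dot\Delta_kg\,\dot\Delta_{k'}h$ lies in $L^{p/2}$, and Bernstein on the frequency ball passes from $L^{p/2}$ to $L^p$ at the cost of $2^{jd/p}$. This yields
\begin{align*}
\|R(g,h)\|_{\dot B^{s_1+s_2}_{p/2,1}}\lesssim \|g\|_{\dot B^{s_1}_{p,1}}\|h\|_{\dot B^{s_2}_{p,1}},
\end{align*}
valid because $s_1+s_2>0$, and the embedding $\dot B^{s_1+s_2}_{p/2,1}\hookrightarrow\dot B^{s}_{p,1}$ finishes the estimate.

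\textbf{Third bullet (\eqref{A.4}).} The argument is identical, with $\ell^\infty$ in place of $\ell^1$ on the $h$-side. For $T_hg$, the condition $s_2<\frac dp$ is what makes the low-frequency sum $\sum_{k'\le k}2^{k'(\frac dp-s_2)}$ geometric even though $h$ only has $\ell^\infty$ control. For the remainder, the endpoint $s_1+s_2=0$ is allowed because the target norm is $\ell^\infty$: summing the $\dot B^{s_1}_{p,1}$ sequence against a bounded sequence gives a uniformly bounded, though not summable, output.

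\textbf{Main obstacle.} I expect the most delicate point to be the remainder at the endpoint $s_1+s_2=0$ in \eqref{A.4}, together with the requirement $p\ge2$ needed for the $L^{p/2}$ step. I would follow \cite[Theorems 2.47, 2.52]{BCD-Book-2011} closely there.
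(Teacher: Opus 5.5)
Your proposal is correct and is the standard argument behind the paper's citation: Bony's decomposition combined with the paraproduct and remainder estimates of \cite[Chapter 2]{BCD-Book-2011}, which is all the paper relies on, since it gives no proof of its own. Your argument uses only $s_1+s_2>0$ (resp.\ $s_1+s_2\geq0$) and never the individual positivity of $s_1,s_2$ written in the statement. This is just as well, because the paper applies the second bullet with $s_1\leq 0$ (e.g.\ $s_1=\frac d2-1$ when $d=2$, or $s_1=\frac d2-2$ when $d=3$ in the proof of Theorem \ref{Th3}).
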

We now present the following lemma concerning the continuity of composite functions:

\begin{lem}\label{LA.4}{\rm(\!\!\cite[Chapter 2]{BCD-Book-2011})}
Let $F:\mathbb{R}\rightarrow \mathbb{R}$ be a smooth function such that $F(0)=0$. Then, for any $1\leq p,r\leq \infty$, $s>0$, and $g\in \dot{B}_{p,r}^s \cap L^\infty$, it holds that $F(g) \in \dot{B}_{p,r}^s \cap L^\infty$, and
\begin{align}\label{A.5}
\|F(g)\|_{\dot{B}^{s}_{p,r}} \leq C_g \|g\|_{\dot{B}_{p,r}^s},
\end{align}
where the constant $C_g > 0$ depends only on $\|g\|_{L^\infty}$, $F'$, $s$, and $d$.
\end{lem}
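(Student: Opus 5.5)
The statement just above is Lemma \ref{LA.4}, the classical composition estimate quoted from \cite[Chapter 2]{BCD-Book-2011}. I would follow the standard paralinearization (Meyer) argument. Since $F(0)=0$ and $g$ is bounded, I would first reduce to the case where $F$ is replaced by a smooth compactly supported modification that agrees with $F$ on $[-\|g\|_{L^\infty},\|g\|_{L^\infty}]$. This explains why the constant depends only on $\|g\|_{L^\infty}$ and $F'$.

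The membership $F(g)\in L^\infty$ is immediate. To place $F(g)$ in $\dot B^s_{p,r}$ in a meaningful way, I would use the telescopic decomposition along the low-frequency cut-offs $\dot S_k g=\sum_{k'\le k-1}\dot\Delta_{k'}g$:
$$F(g)=\sum_{k\in\mathbb Z}\big(F(\dot S_{k+1}g)-F(\dot S_k g)\big)=\sum_{k}m_k\,\dot\Delta_k g,\qquad m_k:=\int_0^1F'\big(\dot S_k g+\tau\dot\Delta_k g\big)\,d\tau.$$
Convergence of the sum in $\mathcal S'_h$ uses $F(0)=0$, $\dot S_k g\to0$ as $k\to-\infty$ (in $L^\infty$ for the relevant class), and $\dot S_kg\to g$ as $k\to+\infty$.

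The key step is a symbol-type bound on $m_k$. Bernstein's inequality gives $\|\partial^\alpha \dot S_k g\|_{L^\infty}\lesssim 2^{k|\alpha|}\|g\|_{L^\infty}$. The Faà di Bruno formula then yields
$$\|\partial^\alpha m_k\|_{L^\infty}\le C_\alpha\big(\|g\|_{L^\infty},F\big)\,2^{k|\alpha|},$$
with constants depending on finitely many derivatives of $F'$ on the bounded range. I would then estimate $\dot\Delta_j\big(m_k\dot\Delta_kg\big)$ in two regimes.

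For $k\ge j$ the trivial bound suffices:
$$\|\dot\Delta_j(m_k\dot\Delta_k g)\|_{L^p}\lesssim \|\dot\Delta_k g\|_{L^p}.$$

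For $k<j$ I would exploit the frequency localization of $\dot\Delta_j$ together with $N$ derivatives, where $N>s$:
$$\|\dot\Delta_j(m_k\dot\Delta_k g)\|_{L^p}\lesssim 2^{-jN}\|\partial^N(m_k\dot\Delta_kg)\|_{L^p}\lesssim 2^{(k-j)N}\|\dot\Delta_kg\|_{L^p}.$$

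Multiplying both bounds by $2^{js}$ gives
$$2^{js}\|\dot\Delta_jF(g)\|_{L^p}\lesssim \sum_{k\ge j}2^{(j-k)s}c_k+\sum_{k<j}2^{(k-j)(N-s)}c_k,\qquad c_k:=2^{ks}\|\dot\Delta_kg\|_{L^p}.$$
Both kernels are summable over $\mathbb Z$: the first because $s>0$, the second because $N>s$. Young's inequality for series in $\ell^r$ therefore yields \eqref{A.5}.

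I expect the main obstacle to be the first regime, $k\ge j$. This is exactly where the hypothesis $s>0$ is needed, and it also forces the careful handling of convergence near $k\to-\infty$ in the homogeneous setting, via the realization $\mathcal S'_h$ and the decay of $\dot S_kg$. The remaining steps are routine.
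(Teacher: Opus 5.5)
Your proposal is correct and takes the same route as the proof behind the citation: the paper gives no argument of its own and simply quotes the lemma from Bahouri--Chemin--Danchin. There it is obtained by exactly this Meyer first linearization $F(g)=\sum_k m_k\dot\Delta_k g$, with the symbol-type bounds on $m_k$ and the two-regime ($k\ge j$ versus $k<j$) summation.

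The one step to tighten is your claim that the telescoping series converges in $\mathcal S'_h$. It converges in $\mathcal S'$, and the limit lies in $\mathcal S'_h$ when $p<\infty$: split $F(g)=F(\dot S_Kg)+\sum_{k\ge K}m_k\dot\Delta_kg$, where the first piece is small in $L^\infty$ as $K\to-\infty$ and the second lies in $L^p$. This can fail when $p=\infty$: $g=\cos x_1$, $F(x)=x^2$ gives $F(g)=\tfrac12+\tfrac12\cos 2x_1\notin\mathcal S'_h$. So for $p=\infty$ only the seminorm bound survives, which is harmless here since the paper only uses the lemma with $p=2$.
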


Next, we consider the following linear transport equation:
\begin{align} \label{G2.6}
\left\{\begin{aligned}
&\partial_t g+ {u} \cdot\nabla g=h, \quad\, x\in \mathbb{R}^d,\quad t>0,  \\
& g(x,0)=g_0(x),\quad \,\,\,\,\,  x\in \mathbb{R}^d.
 \end{aligned}
 \right.
\end{align}

\begin{lem}\label{Lemma2.5}{\rm(\!\!\cite[Chapter 3]{BCD-Book-2011})}
Let $ T > 0 $, $ -\frac{d}{2} < s \leq \frac{d}{2} + 1 $,
 $ 1 \leq r \leq \infty $, $ g_0 \in \dot{B}_{2,r}^s $, $ u \in L^1(0,T; \dot{B}_{2,1}^{\frac{d}{2}+1}) $, 
 and $ h\in L^1(0,T; \dot{B}_{2,r}^s) $. Then there exists a constant $ C > 0 $, 
 independent of $ T $ and $  {g_0} $, such that the solution $g$ to \eqref{G2.6} satisfies
\begin{align}\label{G2.7}
\|g\|_{\widetilde L_T^\infty(\dot B_{2,r}^s)} 
\leq \exp\bigg\{C \|\nabla {u}\|_{L_T^1(\dot B_{2,1}^{\frac{d}{2}})} \bigg\} 
\bigg( \|g_0\|_{\dot B_{2,r}^s} + \int_0^T \|h\|_{\dot B_{2,r}^s}  {\rm d}\tau\bigg).
\end{align}
Moreover, if $ r < \infty $, then the solution $ g \in \mathcal{C}([0,T]; \dot B_{2,r}^s) $.
\end{lem}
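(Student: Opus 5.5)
This is the classical transport estimate in homogeneous Besov spaces. I would prove it by localizing the equation in frequency, estimating each dyadic block with an $L^2$ energy identity, and controlling the commutators that arise. First apply $\dot\Delta_j$ to \eqref{G2.6}. Writing $g_j:=\dot\Delta_j g$, we get
\begin{align*}
\partial_t g_j + u\cdot\nabla g_j = \dot\Delta_j h + R_j, \qquad R_j := u\cdot\nabla g_j - \dot\Delta_j(u\cdot\nabla g)=[u\cdot\nabla,\dot\Delta_j]g.
\end{align*}
Take the $L^2$ inner product with $g_j$ and integrate by parts, using $\int (u\cdot\nabla g_j) g_j\,dx = -\frac12\int \dv u\,|g_j|^2dx$. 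Dividing by $\|g_j\|_{L^2}$ in the standard way (via $\sqrt{\|g_j\|_{L^2}^2+\eta^2}$ with $\eta\to0$) gives
\begin{align*}
\|g_j(t)\|_{L^2}\le \|g_j(0)\|_{L^2}+\int_0^t\Big(\tfrac12\|\dv u\|_{L^\infty}\|g_j\|_{L^2}+\|\dot\Delta_j h\|_{L^2}+\|R_j\|_{L^2}\Big)d\tau .
\end{align*}

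The key ingredient, and the main obstacle, is the commutator estimate
\begin{align*}
2^{js}\|R_j\|_{L^2}\le C c_j(t)\,\|\nabla u\|_{\dot B^{\frac d2}_{2,1}}\|g\|_{\dot B^s_{2,r}},\qquad \|(c_j)\|_{\ell^r}\le1,
\end{align*}
valid precisely for $-\frac d2<s\le \frac d2+1$. I would prove it via Bony's decomposition, splitting
\begin{align*}
u\cdot\nabla g = T_{u}\cdot\nabla g + T_{\nabla g}\cdot u + R(u,\nabla g).
\end{align*}
The main piece $[T_u\cdot\nabla,\dot\Delta_j]g$ is handled by the first-order Taylor formula for the kernel of $\dot\Delta_j$. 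This gives a bound by $\|\nabla u\|_{L^\infty}\|\dot\Delta_{j'}g\|_{L^2}$ for $|j-j'|\le4$, and $\|\nabla u\|_{L^\infty}\lesssim\|\nabla u\|_{\dot B^{d/2}_{2,1}}$. The terms $T_{\nabla g}u$ and the remainder, together with the parts $T'_{\nabla g_j}u$ left after the commutator, impose the constraints on $s$:
\begin{itemize}
\item the paraproduct $T_{\nabla g}u$ needs $s-1-\frac d2<0$ up to the endpoint. The endpoint $s=\frac d2+1$ is admissible only because $u$ is measured in the $\ell^1$-based space $\dot B^{\frac d2+1}_{2,1}$;
\item the remainder $R(u,\nabla g)$ needs $s+\frac d2>0$, i.e. $s>-\frac d2$, using the $L^2\times L^2\to L^1\hookrightarrow \dot B^{-d/2}_{2,\infty}$ embedding of Lemma \ref{LA.2}.
\end{itemize}
The low-frequency part of $u$ must also be handled with $\nabla u$ rather than $u$, so that only homogeneous norms appear. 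This is the standard, slightly tedious, bookkeeping.

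Finally, multiply the dyadic inequality by $2^{js}$, take the $L^\infty_T$ norm in time first and then the $\ell^r$ norm in $j$. This gives
\begin{align*}
\|g\|_{\widetilde L^\infty_t(\dot B^s_{2,r})}\le \|g_0\|_{\dot B^s_{2,r}}+\int_0^t\|h\|_{\dot B^s_{2,r}}d\tau + C\int_0^t \|\nabla u\|_{\dot B^{\frac d2}_{2,1}}\|g\|_{\widetilde L^\infty_\tau(\dot B^s_{2,r})}d\tau .
\end{align*}
To make this step legitimate, one should run Gronwall on $X(t):=\|g\|_{\widetilde L^\infty_t(\dot B^s_{2,r})}$ using the pointwise-in-time version of the inequality. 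Gronwall's lemma then yields \eqref{G2.7}.

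For continuity in time when $r<\infty$:
\begin{itemize}
\item each $g_j$ is continuous in time with values in $L^2$, directly from the localized equation;
\item the uniform bound shows the tail $\sum_{|j|\ge N}2^{jsr}\|g_j\|^r_{L^\infty_T L^2}$ tends to $0$. Running the same estimate over frequencies $|j|\ge N$ with the $\ell^r$ tail of the data and forcing gives this, and it fails for $r=\infty$;
\item a finite sum of continuous functions plus a uniformly small tail gives $g\in\mathcal C([0,T];\dot B^s_{2,r})$.
\end{itemize}
Existence itself, if needed, follows by smoothing the data and $u$, solving by characteristics, and passing to the limit using the uniform bound.
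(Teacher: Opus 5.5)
Your route is the standard one from \cite[Chapter 3]{BCD-Book-2011}, which is all the paper relies on for this lemma. You localize with $\dot\Delta_j$, do an $L^2$ energy estimate on each block, bound $[u\cdot\nabla,\dot\Delta_j]g$ through Bony's decomposition, sum in the Chemin--Lerner norm, and close with Gronwall. For $-\frac d2<s<\frac d2+1$ with any $r\in[1,\infty]$, and for $s=\frac d2+1$ with $r=1$, your sketch is correct.

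\textbf{The endpoint justification is wrong.} You claim $s=\frac d2+1$ is admissible for every $r$ ``because $u$ is measured in the $\ell^1$-based space.'' In the piece $\dot\Delta_j T_{\nabla g}u$ you must control
\begin{align*}
\|\dot S_{j'-1}\nabla g\|_{L^\infty}\lesssim\sum_{j''\le j'-2}2^{j''(\frac d2+1)}\|\dot\Delta_{j''}g\|_{L^2}=2^{j'(\frac d2+1-s)}\sum_{j''\le j'-2}2^{(j''-j')(\frac d2+1-s)}\,2^{j''s}\|\dot\Delta_{j''}g\|_{L^2}.
\end{align*}
For $s<\frac d2+1$, the weight $2^{(j''-j')(\frac d2+1-s)}$ makes this an $\ell^1$-convolution, so you get an $\ell^r$ sequence. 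At $s=\frac d2+1$ the weight is identically $1$. The sum is then bounded uniformly in $j'$ only by $\|g\|_{\dot B^{\frac d2+1}_{2,1}}$, and for $r>1$ it can grow without bound as $j'\to\infty$. This is just the failure of $\dot B^{\frac d2}_{2,r}\hookrightarrow L^\infty$ for $r>1$, applied to $\nabla g$.

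The $\ell^1$-summability of $2^{j'(\frac d2+1)}\|\dot\Delta_{j'}u\|_{L^2}$ only makes the outer sum over $j'$ harmless. It cannot supply the missing summability in $j''$. So the endpoint requires $r=1$, together with $\nabla u\in\dot B^{\frac d2}_{2,1}$, which is exactly how the cited reference states the transport estimate.

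The lemma as printed in the paper overreaches in the same way. This is harmless for the paper, since Lemma \ref{Lemma2.5} is only applied with $r=1$ and $s\in\{\frac d2-1,\frac d2\}$. Still, you should restrict the claim rather than assert the endpoint for all $r$.

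\textbf{Time continuity.} For $r<\infty$ you do not need to rerun the estimate over $|j|\ge N$. Finiteness of $\|g\|_{\widetilde L^\infty_T(\dot B^s_{2,r})}$ already forces $\sum_{|j|\ge N}\big(2^{js}\|\dot\Delta_j g\|_{L^\infty_T(L^2)}\big)^r\to0$. If you do rerun it, you must also handle the tail of the commutator term (for instance by dominated convergence in time), not only the tails of the data and the forcing.
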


We also recall the optimal regularity estimate for the following Lam\'{e} system:
\begin{align} \label{G2.8}
\left\{\begin{aligned}
&\partial_t {u}-\mu\Delta  {u}-(\mu+\lambda)\nabla{\rm div}\,{u}=g, \quad\, x\in \mathbb{R}^d,\quad t>0,  \\
& {u}(x,0)={u}_0(x),\quad \,\,\,\quad\quad\quad\quad\quad\quad\quad  x\in \mathbb{R}^d.
 \end{aligned}
 \right.
\end{align}

\begin{lem}\label{Lemma2.6}{\rm(\!\!\cite[Chapter 3]{BCD-Book-2011})}
Let $ T > 0 $, $ \mu > 0 $, $ 2\mu + d \lambda > 0 $, $ s \in \mathbb{R} $, $ 1 \leq p, r \leq \infty $, and $ 1 \leq \varsigma_2 \leq \varsigma_1 \leq \infty $. Suppose that $ {u}_0 \in \dot{B}^{s}_{p,r} $ and $ g \in \widetilde{L}^{\varsigma_2}(0,T;\dot{B}_{p,r}^{s-2+\frac{2}{\varsigma_2}}) $. Then there exists a solution ${u} $ to \eqref{G2.8} satisfying
\begin{align*}
\min\{\mu,2\mu+\lambda\}^{\frac{1}{\varsigma_1}}\|{u}\|_{\widetilde{L}^{\varsigma_1}_T(\dot B^{s+\frac{2}{\varsigma_1}}_{p,r})}\lesssim \|{u}_0\|_{\dot B_{p,r}^s}+\min\{\mu,2\mu+\lambda\}^{\frac{1}{\varsigma_2}-1}\|g\|_{L^{\varsigma_2}_T(\dot B_{p,r}^{s-2+\frac{2}{\varsigma_2}})}.
\end{align*}
\end{lem}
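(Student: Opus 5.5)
This is the classical maximal regularity estimate for the Lamé semigroup, and I would prove it by Fourier localization and dyadic decay. The plan is to split the system into its compressible and incompressible parts, estimate each dyadic block separately, and then sum in $\ell^r$.

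\textbf{Step 1 (decoupling).} Apply $\mathbb{P}$ and $\mathbb{Q}$ to \eqref{G2.8}. Since $\nabla\dv$ vanishes on divergence-free fields and equals $\Delta$ on gradients, this gives
\begin{align*}
\partial_t\mathbb{P}u-\mu\Delta\mathbb{P}u=\mathbb{P}g,\qquad \partial_t\mathbb{Q}u-(2\mu+\lambda)\Delta\mathbb{Q}u=\mathbb{Q}g.
\end{align*}
The hypothesis $2\mu+d\lambda>0$ together with $\mu>0$ gives $2\mu+\lambda>0$, so both are genuine heat equations, with diffusion constants $\mu$ and $2\mu+\lambda$, each at least $\nu:=\min\{\mu,2\mu+\lambda\}$. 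The operators $\mathbb{P}$ and $\mathbb{Q}$ are Fourier multipliers of degree $0$, hence bounded on each $\dot\Delta_k L^p$ block. For $p\in\{1,\infty\}$ they are not bounded on $L^p$, but the localized operators $\dot\Delta_k\mathbb{P}$ and $\dot\Delta_k\mathbb{Q}$ have $L^1$ kernels with norm independent of $k$ by scaling. The solution is then given by Duhamel's formula, $u=\mathbb{P}u+\mathbb{Q}u$, and existence follows.

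\textbf{Step 2 (dyadic decay).} The key ingredient is the heat-block estimate
\begin{align*}
\|e^{t\nu'\Delta}\dot\Delta_k f\|_{L^p}\le Ce^{-c\nu' t2^{2k}}\|\dot\Delta_k f\|_{L^p},\qquad \nu'\in\{\mu,2\mu+\lambda\},
\end{align*}
valid for every $p\in[1,\infty]$; this is proved in \cite[Chapter 2]{BCD-Book-2011}. Localizing Duhamel's formula then yields
\begin{align*}
\|\dot\Delta_k u(t)\|_{L^p}\lesssim e^{-c\nu t2^{2k}}\|\dot\Delta_k u_0\|_{L^p}+\int_0^t e^{-c\nu(t-\tau)2^{2k}}\|\dot\Delta_k g(\tau)\|_{L^p}\,d\tau.
\end{align*}

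\textbf{Step 3 (time integrability).} Take the $L^{\varsigma_1}(0,T)$ norm in time. For the first term,
\begin{align*}
\|e^{-c\nu t2^{2k}}\|_{L^{\varsigma_1}}\lesssim(\nu2^{2k})^{-1/\varsigma_1}.
\end{align*}
For the Duhamel term, apply Young's convolution inequality with $1+\frac1{\varsigma_1}=\frac1\rho+\frac1{\varsigma_2}$; the constraint $\varsigma_2\le\varsigma_1$ guarantees $\rho\in[1,\infty]$. This gives
\begin{align*}
\|e^{-c\nu t2^{2k}}\|_{L^\rho}\lesssim(\nu2^{2k})^{-(1+\frac1{\varsigma_1}-\frac1{\varsigma_2})}.
\end{align*}
Multiply by $2^{k(s+2/\varsigma_1)}$. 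The powers of $2^k$ combine exactly to $2^{ks}$ for the data term and to $2^{k(s-2+2/\varsigma_2)}$ for the source term. The powers of $\nu$ produce precisely the weights $\nu^{1/\varsigma_1}$ on the left and $\nu^{1/\varsigma_2-1}$ on the right.

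\textbf{Step 4 (summation).} Take the $\ell^r$ norm over $k$. This gives the bound with the Chemin--Lerner norm $\widetilde L^{\varsigma_2}_T$ on $g$. Since the statement uses $L^{\varsigma_2}_T$, one more observation is needed: for $r=1$, Minkowski's inequality gives $\widetilde L^{\varsigma_2}_T\le L^{\varsigma_2}_T$ directly. In general I would keep the Chemin--Lerner norm, as in \cite{BCD-Book-2011}, and read the statement with it.

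The only delicate point is the uniform $L^p$ exponential decay for all $p$, including the endpoints $p=1,\infty$, together with the bounded action of the localized projectors there. Both are standard, so no real obstacle is expected.
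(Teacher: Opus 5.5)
Steps 1--3 are correct, and they are the standard argument behind the paper's citation; the paper gives no proof of its own. You split with $\mathbb{P}/\mathbb{Q}$ into two heat flows with diffusion at least $\nu=\min\{\mu,2\mu+\lambda\}$. You then use uniform dyadic heat decay in $L^p$ (the localized projectors are bounded uniformly in $k$, including at $p=1,\infty$), apply Young's inequality in time, and sum in $\ell^r$. This gives the estimate with $\|g\|_{\widetilde L^{\varsigma_2}_T(\dot B^{s-2+2/\varsigma_2}_{p,r})}$ on the right. That is the form in the cited reference, and it matches the hypothesis $g\in\widetilde L^{\varsigma_2}_T$.

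The remark in Step 4, however, applies Minkowski's inequality in the wrong direction. Of the two mixed norms, the one with the larger exponent on the \emph{outside} is the smaller. So $\|g\|_{\widetilde L^{\varsigma}_T(\dot B^{\sigma}_{p,r})}\le\|g\|_{L^{\varsigma}_T(\dot B^{\sigma}_{p,r})}$ when $\varsigma\le r$, and the reverse inequality holds when $\varsigma\ge r$. For $r=1$ you therefore only get $\|g\|_{L^{\varsigma_2}_T(\dot B^{\sigma}_{p,1})}\le\|g\|_{\widetilde L^{\varsigma_2}_T(\dot B^{\sigma}_{p,1})}$, which does not let you replace the Chemin--Lerner norm by the plain one. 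That replacement is legitimate exactly when $\varsigma_2\le r$. For $r=1$ this means $\varsigma_2=1$, where the two norms coincide by Tonelli.

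For $r<\varsigma_2$, the version with plain $L^{\varsigma_2}_T$ on the right is actually false. Take $p=2$, $r=1$, $\varsigma_1=\varsigma_2=2$, $s=0$, $u_0=0$ and $g=\sum_{k=1}^N\frac{2^k}{k}\mathbf{1}_{I_k}(t)h_k$. Here $h_k$ is divergence free with $\widehat{h_k}$ supported in $\{\frac43 2^k\le|\xi|\le\frac32 2^k\}$ (so that $\dot\Delta_j h_k=\delta_{jk}h_k$) and $\|h_k\|_{L^2}=2^k$. The $I_k\subset[0,1]$ are disjoint intervals of length $2^{-2k}$, each followed by a gap of the same length. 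Then $\|g\|_{L^2_T(\dot B^{-1}_{2,1})}=(\sum_{k\le N}k^{-2})^{1/2}$ stays bounded. By Plancherel, $\|\dot\Delta_k u(t)\|_{L^2}\ge e^{-9\mu/2}k^{-1}$ on the gap following $I_k$, hence $\|u\|_{\widetilde L^2_T(\dot B^{1}_{2,1})}\ge e^{-9\mu/2}\sum_{k\le N}k^{-1}\to\infty$.

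So you should keep $\widetilde L^{\varsigma_2}_T$ on the right, as you ultimately propose. Every application in the paper has $r=\varsigma_2=1$, so nothing downstream is affected.
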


Finally, we end this section with 
the following commutator estimates, which are used to control the nonlinearities in high frequencies.
\begin{lem}\label{LA.5} {\rm(\!\!\cite[Chapter 2]{BCD-Book-2011})}
Let $1\leq p\leq \infty$ and $-\frac{d}{p}<s\leq 1+\frac{d}{p}$. Then it holds that
\begin{align}\label{A.6}
\sum_{k\in\mathbb Z} 2^{ks}\|[g\cdot\nabla,\dot\Delta_k]h\|_{L^p}\lesssim \|g\|_{\dot B^{\frac{d}{p}+1}_{p,1}}\|h\|_{\dot B_{p,1}^s},    
\end{align}
where the commutator $[A,B]:=AB-BA$.
\end{lem}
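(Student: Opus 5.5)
The plan is the standard paradifferential argument from \cite[Chapter 2]{BCD-Book-2011}. Summation over the repeated index $j$ is implied throughout. Write $g\cdot\nabla h=g^j\partial_j h$ and apply Bony's decomposition to $g^j\partial_jh$ and to $g^j\partial_j\dot\Delta_kh$. Then split
\begin{align*}
[g\cdot\nabla,\dot\Delta_k]h
=\big[T_{g^j},\dot\Delta_k\big]\partial_jh
+T_{\partial_j\dot\Delta_kh}g^j-\dot\Delta_kT_{\partial_jh}g^j
+R(g^j,\partial_j\dot\Delta_kh)-\dot\Delta_kR(g^j,\partial_jh),
\end{align*}
and bound each of the five pieces in $\ell^1(2^{ks}L^p)$ by $\|g\|_{\dot B^{\frac dp+1}_{p,1}}\|h\|_{\dot B^s_{p,1}}$. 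Throughout I use $\|\nabla g\|_{L^\infty}\lesssim\|g\|_{\dot B^{\frac dp+1}_{p,1}}$, which follows from the embedding in Lemma \ref{LA.2}.

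\textbf{Main term.} By spectral localization, $[T_{g^j},\dot\Delta_k]\partial_jh=\sum_{|k'-k|\le4}[\dot S_{k'-1}g^j,\dot\Delta_k]\partial_j\dot\Delta_{k'}h$. Write $\dot\Delta_k$ as convolution with $2^{kd}\varphi(2^k\cdot)$ and use the mean value theorem in the kernel representation. This gives the first-order commutator bound
\begin{align*}
\|[a,\dot\Delta_k]b\|_{L^p}\lesssim2^{-k}\|\nabla a\|_{L^\infty}\|b\|_{L^p}.
\end{align*}
Applied with $b=\partial_j\dot\Delta_{k'}h$, the factor $2^{-k}$ cancels the derivative. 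The result is $2^{ks}\|\cdot\|_{L^p}\lesssim c_k\|\nabla g\|_{L^\infty}\|h\|_{\dot B^s_{p,1}}$ with $(c_k)\in\ell^1$. This part needs no restriction on $s$.

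\textbf{Paraproduct terms.} The term $\dot\Delta_kT_{\partial_jh}g^j=\sum_{|k'-k|\le4}\dot\Delta_k(\dot S_{k'-1}\partial_jh\,\dot\Delta_{k'}g^j)$ is controlled by $\|\dot S_{k'-1}\nabla h\|_{L^\infty}\|\dot\Delta_{k'}g\|_{L^p}$. By Bernstein's inequality, $\|\dot S_{k'-1}\nabla h\|_{L^\infty}\lesssim\sum_{k''\le k'-2}2^{k''(1+\frac dp)}\|\dot\Delta_{k''}h\|_{L^p}$. Using $\|\dot\Delta_{k'}g\|_{L^p}\approx 2^{-k'(\frac dp+1)}(\cdot)$ leaves a factor $2^{(k''-k')(1+\frac dp-s)}$, which is summable exactly when $s\le 1+\frac dp$. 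In the equality case the sum over $k''$ is handled by the $\ell^1$ structure of $\|h\|_{\dot B^s_{p,1}}$. The term $T_{\partial_j\dot\Delta_kh}g^j$ involves only $\dot S_{k'-1}\partial_j\dot\Delta_kh$ with $k'\ge k+1$, and is treated the same way with the same constraint.

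\textbf{Remainder terms.} These are where I expect the real difficulty, namely the lower bound $s>-\frac dp$, especially for $p>2$, where $L^p\times L^p$ only lands in $L^{p/2}$. For $\dot\Delta_kR(g^j,\partial_jh)$ I first rewrite $R(g^j,\partial_jh)=\partial_jR(g^j,h)-R(\dv g,h)$. This moves the derivative outside, where it costs only $2^k$. Then I bound $\dot\Delta_k\sum_{k'\ge k-3}\dot\Delta_{k'}g\,\widetilde{\dot\Delta}_{k'}h$ using Bernstein's inequality from $L^{p/2}$ to $L^p$ when $p\ge2$, which gains $2^{k\frac dp}$. (For $p<2$ I instead put $\dot\Delta_{k'}g$ in $L^\infty$.) The factor $2^{(k-k')(s+\frac dp)}$ that appears is summable over $k'\ge k-3$ precisely because $s+\frac dp>0$. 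The term $R(g^j,\partial_j\dot\Delta_kh)$ is localized to $|k'-k|\le1$ and is estimated directly by $\|\nabla g\|_{L^\infty}$-type bounds. Summing all five contributions over $k$ gives \eqref{A.6}.
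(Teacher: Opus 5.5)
Your argument is correct for $p\ge2$, but the parenthetical case $1\le p<2$ is a genuine gap. In fact the statement as quoted is false for part of that range.

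\textbf{The case $p\ge2$.} Here your proof is the standard five-term paradifferential argument of the cited source (\cite[Chapter~2]{BCD-Book-2011}). That citation is all the paper relies on, and your bookkeeping is right. Only the remainder $\dot\Delta_kR(g^j,\partial_jh)$ sees the lower bound on $s$, through the factor $2^{(k-k')(s+\frac dp)}$. Two small slips: the obstruction to $L^p\times L^p\to L^{p/2}$ is $p<2$, not $p>2$; and $T_{\partial_j\dot\Delta_kh}g^j$ needs no condition on $s$ at all, since its geometric factor is $2^{(k-k')(1+\frac dp)}$ over $k'>k$.

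\textbf{The gap for $1\le p<2$.} Let $(c_{k'})$ and $(d_{k'})$ denote the normalized $\ell^1$ sequences of $g$ and $h$. If you put $\dot\Delta_{k'}g$ in $L^\infty$, the piece $2^{ks}\dot\Delta_kR(\dv g,h)$ is bounded only by $\sum_{k'\ge k-3}2^{(k-k')s}c_{k'}d_{k'}\|g\|_{\dot B^{\frac dp+1}_{p,1}}\|h\|_{\dot B^s_{p,1}}$, with no gain. Summing over $k'$ and then over $k$ therefore requires $s>0$. The best available variant is H\"older $L^p\times L^{p'}\to L^1$ followed by Bernstein $L^1\to L^p$. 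That gains $2^{k\frac d{p'}}$ and gives $s>-\frac d{p'}$.

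\textbf{Why no argument reaches $s>-\frac dp$.} For $p<2$ the estimate \eqref{A.6} fails on $-\frac dp<s\le-\frac d{p'}$. Take a nonzero real $\phi\in\mathcal S$ with $\hat\phi$ real, radial and supported in $\{1\le|\xi|\le2\}$, and set $g=\nabla\phi$, $h=\phi$. Both have finitely many nonzero dyadic blocks, so the right-hand side of \eqref{A.6} is finite. For $k\le-2$ we have $\dot\Delta_kh=0$, hence $[g\cdot\nabla,\dot\Delta_k]h=-\dot\Delta_k|\nabla\phi|^2$. By Hausdorff--Young,
$\|\dot\Delta_k|\nabla\phi|^2\|_{L^p}\gtrsim\|\varphi(2^{-k}\cdot)\widehat{|\nabla\phi|^2}\|_{L^{p'}}\gtrsim 2^{k\frac d{p'}}\|\nabla\phi\|_{L^2}^2$ as $k\to-\infty$.
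So the left-hand side of \eqref{A.6} diverges whenever $s\le-\frac d{p'}$.

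\textbf{What to do.} The correct hypothesis, as in \cite[Chapter~2]{BCD-Book-2011}, is $-d\min\{\frac1p,\frac1{p'}\}<s\le1+\frac dp$. It agrees with the stated range exactly when $p\ge2$. Since the paper only uses $p=2$, nothing downstream is affected. Either restrict your proof to $p\ge2$, or prove the corrected range using the $L^{p'}$ H\"older step for $p<2$.
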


\section{Local existence and uniqueness}
In this section, we prove the local well-posedness of the scaled compressible Navier--Stokes system \eqref{1.3}--\eqref{1.4}. We follow the classical Eulerian iteration scheme of Danchin in critical Besov spaces (cf. \cite[Section 3]{Danchin-2018}). The main idea is to construct
an approximate sequence by solving, successively, a linear transport equation and  the Lam\'{e} equation.
Throughout this section, we set
\begin{align*}
\mathcal A u^\varepsilon:=\mu\Delta u^\varepsilon+(\mu+\lambda)\nabla\dv u^\varepsilon.
\end{align*}
Then,   \eqref{1.3} can be written as
\begin{equation}\label{localG3.1}
\left\{
\begin{aligned}
&\partial_t a^\varepsilon+ u^\varepsilon\cdot\nabla a^\varepsilon
=-(1+a^\varepsilon){\rm div} u^\varepsilon,\\
&\partial_t  u^\varepsilon-\mathcal A u^\varepsilon+\varepsilon^2\nabla a^\varepsilon
= - u^\varepsilon\cdot\nabla u^{\varepsilon}
-f(a^\varepsilon) \mathcal A  u^\varepsilon.
\end{aligned}
\right.
\end{equation}

\begin{proof}[Proof of Theorem \ref{Th0}] 
Since the proof of Theorem \ref{Th0} is relatively complicated,
below we split it into six steps.

\medskip
\noindent
\textbf{Step 1. Approximate solutions.}
Let $\dot S_n$ be the usual homogeneous low-frequency cut-off operator. Define
\begin{align*}
a^\varepsilon_{0,n}:=\dot S_n a^\varepsilon_0,
\qquad
u^\varepsilon_{0,n}:=\dot S_n u^\varepsilon_0.    
\end{align*} 
By virtue of Lemma \ref{LA.2},
we deduce that
\begin{align*}
\|a^\varepsilon_0\|_{L^\infty}
\leq C\|a^\varepsilon_0\|_{\dot B^{\frac d2}_{2,1}}
\leq C\kappa_0,
\end{align*}
and
\begin{align*}
a^\varepsilon_{0,n}\to a^\varepsilon_0
\qquad\hbox{in }\qquad L^\infty(\mathbb R^d).
\end{align*}
Since $\kappa_0$ is small enough, we may assume that
\begin{align*}
  \|a^\varepsilon_0\|_{L^\infty}\leq C\kappa_{0} \leq {\frac{1}{8}},  
\end{align*}
which gives rise to
\begin{align*}
\|a^\varepsilon_{0,n}\|_{L^\infty}\leq \|a_{0,n}^{\varepsilon}\|_{\dot B_{2,1}^{\frac{d}{2}}}  \leq \frac14,\qquad \text{and}\qquad1+a_{0,n}^\varepsilon\geq \frac{3}{4},    
\end{align*}
for all  sufficiently large  $n$. If necessary, discard finitely many terms. Then, we assume that the above lower bound holds for all {\(n\geq1\)}. 

We define the first term of the sequence by
\begin{align*}
 {a^\varepsilon_1(t,x):=a^\varepsilon_{0,1}(x),
\qquad
 u^\varepsilon_1(t,x):=e^{t\mathcal A} u^\varepsilon_{0,1}(x), }
\end{align*}
where $\{e^{t\mathcal A}\}_{t\geq 0}$ represents the semigroup of operators associated with  \eqref{G2.8}.
Suppose that \((a^\varepsilon_n, u^\varepsilon_n)\) has been constructed. According to \eqref{localG3.1}, we further consider the following linear transport and Lam\'{e} equations corresponding to $(a^\varepsilon_{n+1},u^\varepsilon_{n+1})$:
\begin{equation} \label{G3.2}
\left\{
\begin{aligned}
&\partial_t a^\varepsilon_{n+1}
+ u^\varepsilon_n\cdot\nabla a^\varepsilon_{n+1}
=-(1+a^\varepsilon_n){\rm div}\,  u^\varepsilon_n,\\
&\partial_t u^\varepsilon_{n+1}-\mathcal A  u^\varepsilon_{n+1}+\varepsilon^2\nabla a^\varepsilon_n
=
- u^\varepsilon_n\cdot\nabla u^\varepsilon_n
- f(a^\varepsilon_n)\mathcal{A} u_{n}^\varepsilon ,\\
&a^\varepsilon_{n+1}|_{t=0}=a^\varepsilon_{0,n+1},\qquad u^\varepsilon_{n+1}|_{t=0}= u^\varepsilon_{0,n+1}.
\end{aligned}
\right.
\end{equation}
For each fixed \(n\), the functions \(a^\varepsilon_n\) and \( u^\varepsilon_n\) are smooth because the initial data are spectrally localized. Hence, the system \eqref{G3.2} has a unique smooth solution on some time interval. The estimates below show that all the approximate solutions are defined on a common time interval \([0,T_\varepsilon]\).


\medskip
\noindent
\textbf{Step 2. Uniform estimates for $a_{n}^{\varepsilon}$.}
For later use, we define $\mathcal{X}_{\varepsilon,n}(t)$, $A_{\varepsilon,n}(t)$, $B_{\varepsilon,n}(t)$ and $U_{\varepsilon,n}(t)$ as follows:
\begin{align}\label{G3.3}
\mathcal X_{\varepsilon,n}(t):=&\,{\varepsilon
\|a_n^\varepsilon\|_{\widetilde L_t^\infty
(\dot B^{\frac d2-1}_{2,1})}
+\|a_n^\varepsilon\|_{\widetilde L_t^\infty
(\dot B^{\frac d2}_{2,1})}+\varepsilon
\|a_n^\varepsilon\|_{L_t^1
(\dot B^{\frac d2+1}_{2,1}+\dot B^{\frac d2}_{2,1})}}\nonumber\\
&+\| u_n^\varepsilon\|_{\widetilde L_t^\infty
(\dot B^{\frac d2-1}_{2,1})} +
\| u_n^\varepsilon\|_{L_t^1
(\dot B^{\frac d2+1}_{2,1})},
\end{align}
and 
\begin{align*}
A_{\varepsilon,n}(t)=\|a_n^\varepsilon\|_{\widetilde L_t^\infty
(\dot B^{\frac d2}_{2,1})},\qquad B_{\varepsilon,n}(t):=\varepsilon\|a_n^\varepsilon\|_{\widetilde L_t^\infty
(\dot B^{\frac d2-1}_{2,1})},\qquad     U_{\varepsilon,n}(t)=\| u_n^\varepsilon\|_{L_t^1
(\dot B^{\frac d2+1}_{2,1})}.
\end{align*}
Below, we will prove that $\mathcal{X}_{\varepsilon,n}(t)$ is uniformly bounded with respect to $n$.
Applying the transport estimate \eqref{G2.7} with \(s=d/2 \) in Lemma \ref{Lemma2.5}, we have
\begin{align}\label{G3.4}
\|a^\varepsilon_{n+1} \|_{\widetilde L_{T_{\varepsilon}}^\infty
(\dot B^{\frac d2}_{2,1})}
&\leq
C\exp\bigg\{C\| u^\varepsilon_n\|_{L_{T_{\varepsilon}}^1
(\dot B^{\frac d2+1}_{2,1})}\bigg\}
\bigg(
\|a^\varepsilon_{0,n+1}\|_{\dot B^{\frac d2}_{2,1}}
+
\|(1+a^\varepsilon_n){\rm div} \,u^\varepsilon_n\|_{L_{T_{\varepsilon}}^1
(\dot B^{\frac d2}_{2,1})}\bigg).
\end{align}
It follows from Lemma \ref{LA.3} that
\begin{align}\label{G3.5}
\|(1+a^\varepsilon_n){\rm div} \, u^\varepsilon_n\|_{L_{T_{\varepsilon}}^1
(\dot B^{\frac d2}_{2,1})}\lesssim&\, \|u_{n}^\varepsilon\|_{L_{T_{\varepsilon}}^1(\dot B_{2,1}^{\frac{d}{2}+1})}\bigg(1+\|a_n^\varepsilon\|_{\widetilde L_{T_{\varepsilon}}^\infty(\dot B_{2,1}^\frac{d}{2})}\bigg).
\end{align}
Putting \eqref{G3.5} into \eqref{G3.4} yields 
\begin{align*} 
\|a^\varepsilon_{n+1} \|_{\widetilde L_{T_{\varepsilon}}^\infty
(\dot B^{\frac d2}_{2,1})}\lesssim&\, \exp\bigg\{C\| u^\varepsilon_n\|_{L_{T_{\varepsilon}}^1
(\dot B^{\frac d2+1}_{2,1})}\bigg\} \bigg\{\|a^\varepsilon_0\|_{\dot B^{\frac d2}_{2,1}}+
\|u_{n}^\varepsilon\|_{L_{T_{\varepsilon}}^1(\dot B_{2,1}^{\frac{d}{2}+1})}\bigg(1+\|a_n^\varepsilon\|_{\widetilde L_{T_{\varepsilon}}^\infty(\dot B_{2,1}^\frac{d}{2})}\bigg)\bigg\},
\end{align*}
which gives rise to
\begin{align}\label{G3.7}
A_{\varepsilon,n+1}(T_{\varepsilon})
&\leq
C e^{C U_{\varepsilon,n}(T_{\varepsilon})}
\|a^\varepsilon_0\|_{\dot B^{\frac d2}_{2,1}}
+C(1+A_{\varepsilon,n}(T_{\varepsilon}))
 \big(e^{C U_{\varepsilon,n}(T_{\varepsilon})}-1\big).    
\end{align}

We now close this estimate. Assume that
\begin{align}\label{G3.8}
A_{\varepsilon,n}(T_{\varepsilon})\leq 4C\kappa_{0}\leq 1,    
\end{align}
and
\begin{align}\label{G3.9}
C U_{\varepsilon,n}(T_{\varepsilon})\leq \log(1+\kappa_0).    
\end{align}
Hence, combining \eqref{G3.7}, \eqref{G3.8} and \eqref{G3.9}, we derive
\begin{align*}
A_{\varepsilon,n+1}(T_{\varepsilon})
\leq  C(1+\kappa_0)\kappa_0+C(1+4C\kappa_0)\kappa_0\leq 4C\kappa_0\leq 1.
\end{align*}
Observing that $\dot B_{2,1}^{\frac{d}{2}}(\mathbb{R}^d)$ is continuously embedded in $L^\infty(\mathbb{R}^d)$, one can choose $\kappa_0$ sufficiently small so that
\begin{align*}
\|a_{n}^\varepsilon\|_{\widetilde L^\infty_{T_{\varepsilon}}(\dot B^{\frac{d}{2}}_{2,1})}\leq 4C\kappa_{0}\leq 1, \qquad\|a^\varepsilon_{n}\|_{L^\infty_{T_{\varepsilon}}(L^\infty)}
\leq \frac12,
\end{align*}
for any  {$n\geq 1$}.
Therefore, we obtain 
\begin{align*} 1 + a_{n }^{\varepsilon} \geq \frac{1}{2}, \end{align*}
for any  {$n\geq 1$},
which implies that \(f(a_{n }^{\varepsilon})\) is well-defined.
Furthermore,  
\begin{align*}
\|f(a^\varepsilon_{n})\|_{\dot B^{\frac d2}_{2,1}}
\leq
C\|a^\varepsilon_{n}\|_{\dot B^{\frac d2}_{2,1}}.    
\end{align*}

Now, we proceed to the estimate of \(\varepsilon a^\varepsilon_{n + 1}\).
Similar to \eqref{G3.4}, taking $s = d/2 - 1$ in Lemma \ref{Lemma2.5} yields  
\begin{align} \label{G3.10}
B_{\varepsilon,n+1}(T_{\varepsilon})
&\leq
C\exp\bigg\{C\| u^\varepsilon_n\|_{L_{T_{\varepsilon}}^1
(\dot B^{\frac d2+1}_{2,1})}\bigg\}
\bigg(
\varepsilon\|a^\varepsilon_{0,n+1}\|_{\dot B^{\frac d2-1}_{2,1}}
+
\varepsilon\|(1+a^\varepsilon_n){\rm div} \, u^\varepsilon_n\|_{L_{T_{\varepsilon}}^1
(\dot B^{\frac d2-1}_{2,1})}\bigg).
\end{align}
Using Lemmas \ref{LA.2}--\ref{LA.3}, we arrive at
\begin{align*}
&\varepsilon\|a^\varepsilon_{0,n+1}\|_{\dot B^{\frac d2-1}_{2,1}}
+
\varepsilon\|(1+a^\varepsilon_n){\rm div} \,u^\varepsilon_n\|_{L_{T_{\varepsilon}}^1
(\dot B^{\frac d2-1}_{2,1})}\nonumber\\
\lesssim&\,  \varepsilon\|a^\varepsilon_{0 }\|_{\dot B^{\frac d2-1}_{2,1}} +\varepsilon  \|u_{n}^\varepsilon\|_{L_{T_{\varepsilon}}^1(\dot B_{2,1}^{\frac{d}{2}})}\bigg(1+\|a_n^\varepsilon\|_{\widetilde L_{T_{\varepsilon}}^\infty(\dot B_{2,1}^\frac{d}{2})}\bigg) \nonumber\\
\lesssim&\, \varepsilon\|a^\varepsilon_{0 }\|_{\dot B^{\frac d2-1}_{2,1}} +\varepsilon  {T_{\varepsilon}}^{\frac{1}{2}}
\| u^\varepsilon_n\|_{\widetilde L^\infty_T(\dot B^{\frac d2-1}_{2,1})}^{\frac{1}{2}}
\| u^\varepsilon_n\|_{L^1_T(\dot B^{\frac d2+1}_{2,1})}^{\frac{1}{2}}\bigg(1+\|a_n^\varepsilon\|_{\widetilde L_{T_{\varepsilon}}^\infty(\dot B_{2,1}^\frac{d}{2})}\bigg), 
\end{align*}
which together with \eqref{G3.10}, yields
\begin{align}\label{G3.11}
 B_{\varepsilon,n+1}(T_{\varepsilon})\leq
C e^{C U_{\varepsilon,n}(T_{\varepsilon})}
\Big(
\varepsilon\|a^\varepsilon_0\|_{\dot B^{\frac d2-1}_{2,1}}
+
\varepsilon\big(1+A_{{\varepsilon,n}}(T_{\varepsilon})\big)T_{\varepsilon}^{\frac{1}{2}}
\|u^\varepsilon_n\|_{\widetilde L^\infty_{T_{\varepsilon}}(\dot B^{\frac d2-1}_{2,1})}^{\frac{1}{2}}
U_{{\varepsilon,n}}^{\frac{1}{2}}(T_{\varepsilon})
\Big).    
\end{align}
To close the estimate of \eqref{G3.9} and \(B_{\varepsilon,n}(T_{\varepsilon})\) in \eqref{G3.11}, it remains to establish uniform bounds for \( u^\varepsilon_n\), in particular for
\(\| u^\varepsilon_n\|_{L^\infty_{T_\varepsilon}
(\dot B^{\frac d2-1}_{2,1})}\). We therefore proceed to derive the velocity estimates.

\medskip
\noindent
\textbf{Step 3. Uniform estimates for $u_{n}^{\varepsilon}$.}
Define $u^\varepsilon_{L,n}(t)$ and $\bar {{u}}^\varepsilon_n$ as
\begin{align*}
 u^\varepsilon_{L,n}(t):=e^{t\mathcal A}u^\varepsilon_{0,n},
\qquad
\bar { u}^\varepsilon_n:= u^\varepsilon_n-u^\varepsilon_{L,n}.    
\end{align*}
Thanks to the Lam\'{e} semigroup estimate and the dominated convergence theorem, we have  
\begin{align*}
\sup_{ {n\geq1}} \bigg\{
\| u^\varepsilon_{L,n}\|_{\widetilde L_{T_{\varepsilon}}^2(\dot B_{2,1}^{\frac{d}{2}})}+\| u^\varepsilon_{L,n}\|_{ L_{T_{\varepsilon}}^1(\dot B_{2,1}^{\frac{d}{2}+1})} \bigg\}
\longrightarrow0
\qquad\hbox{as }\qquad T_{\varepsilon}\to0.    
\end{align*}
Let \(\eta>0\) be a small number that will be chosen later, such that
\begin{align}\label{G3.12}
\sup_{ {n\geq1}} \bigg\{
\| u^\varepsilon_{L,n}\|_{\widetilde L_{T_{\varepsilon}}^2(\dot B_{2,1}^{\frac{d}{2}})}+\| u^\varepsilon_{L,n}\|_{ L_{T_{\varepsilon}}^1(\dot B_{2,1}^{\frac{d}{2}+1})} \bigg\}\leq \eta. 
\end{align}
Since \(\bar {u}^\varepsilon_{n+1}|_{t=0}=0\), owing to the   \eqref{G3.2}$_2$, it satisfies
\begin{align*} 
\partial_t\bar { u}^\varepsilon_{n+1}
-\mathcal A {\bar u}^\varepsilon_{n+1}
=-\varepsilon^2\nabla a^\varepsilon_n
- u^\varepsilon_n\cdot\nabla  u^\varepsilon_n
- f(a^\varepsilon_n)\mathcal{A} u_{n}^\varepsilon.
\end{align*}

Assume that 
\begin{align}\label{G3.13}
 \| u_{n}^\varepsilon\|_{\widetilde L_{T_{\varepsilon}}^2(\dot B_{2,1}^{\frac{d}{2}})}+   \| u_{n}^\varepsilon\|_{ L_{T_{\varepsilon}}^1(\dot B_{2,1}^{\frac{d}{2}+1})}\leq 2\eta,
\end{align}
 then through Lemma \ref{LA.3} and Lemma \ref{Lemma2.6}, we derive  
\begin{align} \label{G3.14}
&\|\bar { u}^\varepsilon_{n+1}\|_{_{\widetilde L_{T_{\varepsilon}}^{\infty}(\dot B_{2,1}^{\frac{d}{2}-1})}}
+\|\bar { u}^\varepsilon_{n+1}\|_{_{\widetilde L_{T_{\varepsilon}}^2(\dot B_{2,1}^{\frac{d}{2}})}}
+\|\bar { u}^\varepsilon_{n+1}\|_{_{\widetilde L_{T_{\varepsilon}}^1(\dot B_{2,1}^{\frac{d}{2}+1})}}\nonumber\\
\leq&\, C\varepsilon^2
\|\nabla a^\varepsilon_n\|_{L^1_{T_{\varepsilon}}
(\dot B^{\frac d2-1}_{2,1})}+
C
\| u^\varepsilon_n\cdot\nabla u^\varepsilon_n\|_{L^1_{T_{\varepsilon}}
(\dot B^{\frac d2-1}_{2,1})}
+C
\|f(a^\varepsilon_n) \mathcal{A} u^\varepsilon_n\|_{L^1_{T_{\varepsilon}}
(\dot B^{\frac d2-1}_{2,1})}\nonumber\\
\leq&\, C\varepsilon^2  T_{\varepsilon} A_{\varepsilon,n}(T_{\varepsilon})+C
\| u^\varepsilon_n\|_{\widetilde L^2_{T_{\varepsilon}}
(\dot B^{\frac d2}_{2,1})}^2+C
A_{\varepsilon,n}(T_{\varepsilon})
\| u^\varepsilon_n\|_{L^1_{T_{\varepsilon}}
(\dot B^{\frac d2+1}_{2,1})}\nonumber\\
 \leq&\, C\eta^2+C \eta\kappa_0+C\varepsilon^2T_\varepsilon. 
 \end{align}
Then, we choose \(\kappa_0 > 0\), \(\eta > 0\), and a sufficiently small \(T_\varepsilon>0\) such that
\begin{align*}
C\eta^2+C\kappa_0  \eta+C\varepsilon^2T_\varepsilon \leq \eta,   
\end{align*}
which together with \eqref{G3.14}, leads to
\begin{align*}
 \|\bar { u}^\varepsilon_{n+1}\|_{_{\widetilde L_{T_{\varepsilon}}^{\infty}(\dot B_{2,1}^{\frac{d}{2}-1})}}+\|\bar{ u}_{n+1}^\varepsilon\|_{\widetilde L_{T_{\varepsilon}}^2(\dot B_{2,1}^{\frac{d}{2}})}+   \|\bar{u}_{n+1}^\varepsilon\|_{ L_{T_{\varepsilon}}^1(\dot B_{2,1}^{\frac{d}{2}+1})}\leq \eta.
\end{align*}
 By virtue of \eqref{G3.12}, we further get
\begin{align*}
 &\| { u}_{n+1}^\varepsilon\|_{\widetilde L_{T_{\varepsilon}}^2(\dot B_{2,1}^{\frac{d}{2}})}+   \| { u}_{n+1}^\varepsilon\|_{ L_{T_{\varepsilon}}^1(\dot B_{2,1}^{\frac{d}{2}+1})}\nonumber\\
 \leq &\,\| u^\varepsilon_{L,n}\|_{\widetilde L_{T_{\varepsilon}}^2(\dot B_{2,1}^{\frac{d}{2}})}+\|u^\varepsilon_{L,n}\|_{ L_{T_{\varepsilon}}^1(\dot B_{2,1}^{\frac{d}{2}+1})} +\|\bar{u}_{n+1}^\varepsilon\|_{\widetilde L_{T_{\varepsilon}}^2(\dot B_{2,1}^{\frac{d}{2}})}+   \|\bar{u}_{n+1}^\varepsilon\|_{ L_{T_{\varepsilon}}^1(\dot B_{2,1}^{\frac{d}{2}+1})}\nonumber\\
\leq&\, 2\eta,
\end{align*}
which closes the bootstrap assumption \eqref{G3.13}.
If we take \(\eta>0\) even smaller if necessary such that
\begin{align*}
2C\eta\leq \log(1+\kappa_0),    
\end{align*}
then \eqref{G3.9} follows.

For the remaining estimate of \(\|u^\varepsilon_n\|_{\widetilde L^\infty_{T_\varepsilon}(\dot{B}^{\frac{d}{2}-1}_{2,1})}\), we deduce from Lemma \ref{Lemma2.6} and \eqref{G3.14} that
\begin{align*}
\| u^\varepsilon_{n+1}\|_{\widetilde L^\infty_{T_\varepsilon}
(\dot B^{\frac d2-1}_{2,1})}
\leq
C\| u^\varepsilon_0\|_{\dot B^{\frac d2-1}_{2,1}}+\eta,    
\end{align*}
which gives rise to
\begin{align*}
\sup_{ {n\geq1}}
\| u^\varepsilon_n\|_{\widetilde L^\infty_{T_\varepsilon}
(\dot B^{\frac d2-1}_{2,1})}
\leq C.    
\end{align*}
As a result, we obtain that the following main bootstrap estimates are now established:
\begin{align}\label{G3.15}
\sup_{ {n\geq1}}
\bigg\{
\|a^\varepsilon_n\|_{\widetilde L^\infty_{T_\varepsilon}
(\dot B^{\frac d2}_{2,1})}
+
\| u^\varepsilon_n\|_{\widetilde L^\infty_{T_\varepsilon}
(\dot B^{\frac d2-1}_{2,1})}
+
 \| u_{n}^\varepsilon\|_{\widetilde L_{T_{\varepsilon}}^2(\dot B_{2,1}^{\frac{d}{2}})}+   \| u_{n}^\varepsilon\|_{ L_{T_{\varepsilon}}^1(\dot B_{2,1}^{\frac{d}{2}+1})}
\bigg\}\leq C,    
\end{align}
for some uniform $C>0$.
In view of the definition of $\mathcal{X}_{\varepsilon,n}$ in \eqref{G3.3}, we now return to the estimate of $\varepsilon a^{\varepsilon}_{n}$.

\medskip
\noindent
\textbf{Step 4. Uniform estimates for $\varepsilon a_{n}^{\varepsilon}$.}
We now propagate $\varepsilon a^\varepsilon_n$ in $\dot B^{\frac d2-1}_{2,1}$. Putting the estimate \eqref{G3.15} into
\eqref{G3.11}, we arrive at
\begin{align*}
B_{\varepsilon,n+1}(T_{\varepsilon})\leq C e^{2C\eta}
\Big(
\varepsilon\|a^\varepsilon_0\|_{\dot B^{\frac d2-1}_{2,1}}
+
\varepsilon(1+4C\kappa_0)T_\varepsilon^{\frac{1}{2}} (2\eta)^{\frac{1}{2}}
\Big),   
\end{align*}
 which yields
\begin{align}\label{G3.16}
 \sup_{ {n\geq1}}
\varepsilon
\|a^\varepsilon_n\|_{\widetilde L^\infty_{T_\varepsilon}
(\dot B^{\frac d2-1}_{2,1})}\leq C,  
 \end{align}
for some uniform $C>0$.
Finally, making use of the property of $a_{n}^\varepsilon$ in the low-frequency regime, one has
\begin{align*} 
\varepsilon
\|a^\varepsilon_n\|_{L^1_{T_\varepsilon}
(\dot B^{\frac d2+1}_{2,1}+\dot B^{\frac d2}_{2,1})}
\leq
\varepsilon T_\varepsilon
\|a^\varepsilon_n\|_{\widetilde L^\infty_{T_\varepsilon}
(\dot B^{\frac d2}_{2,1})}
\leq
4C\varepsilon T_\varepsilon \kappa_0,
\end{align*}
which gives
\begin{align}\label{G3.17}
 \sup_{ {n\geq1}} 
\varepsilon
\|a^\varepsilon_n\|_{\widetilde L^1_{T_\varepsilon}
(\dot B^{\frac d2+1}_{2,1}+\dot B_{2,1}^{\frac{d}{2}})}\leq C,  
 \end{align}
for some uniform $C>0$.
Collecting all the estimates \eqref{G3.15}, \eqref{G3.16}, and \eqref{G3.17} with \eqref{G3.3}, we end up with
\begin{align*}
\mathcal{X}_{\varepsilon,n}(T_{\varepsilon})\leq C,    
\end{align*}
for some uniform $C>0$. The uniform estimates established above are independent of \(n\). Next, we show that the sequence 
{$ \{(a_n^\varepsilon, u_n^\varepsilon)\}_{n\geq1}$ }is convergent in suitable lower-order Besov spaces, and that its limit satisfies the same uniform bounds due to the Fatou Lemma.
 We distinguish the cases \(d\geq3\) and \(d=2\).
 
\medskip
\noindent
\textbf{Step 5. Convergence of the iterative sequence.}
First, we will discuss the case where $d\geq3$.
Define $\delta a^\varepsilon_n$ and $\delta {u}^\varepsilon_n$ as follows:
\begin{align*}
 {\delta a^\varepsilon_{n+1}}:=a^\varepsilon_{n+1}-a^\varepsilon_n,
\qquad
 {\delta u^\varepsilon_{n+1}}:= u^\varepsilon_{n+1}-u^\varepsilon_n.    
\end{align*}
By subtracting the equations at ranks \(n + 1\) and \(n\) with respect to the system \eqref{G3.2}, we obtain
\begin{equation}  \label{G3.18}
\left\{
\begin{aligned}
&\partial_t\delta a^\varepsilon_{n+1}
+u^\varepsilon_n\cdot\nabla\delta a^\varepsilon_{n+1}=
-\delta  u^\varepsilon_n\cdot\nabla a^\varepsilon_n
-(1+a^\varepsilon_n){\rm div}\,\delta u^\varepsilon_n
-\delta a^\varepsilon_n{\rm div}\, u^\varepsilon_{n-1},\\
&\partial_t\delta  u^\varepsilon_{n+1}
-\mathcal A\delta u^\varepsilon_{n+1}+\varepsilon^2\nabla\delta a^\varepsilon_n\\
&\quad=-\delta  u^\varepsilon_n\cdot\nabla  u^\varepsilon_n
- u^\varepsilon_{n-1}\cdot\nabla\delta u^\varepsilon_n-f(a_n^\varepsilon)\mathcal{A}\delta u_{n}^\varepsilon-\big(f(a^\varepsilon_n)-f(a^\varepsilon_{n-1})\big)
\mathcal{A} { u^\varepsilon_{n-1}},\\
&\delta a^\varepsilon_{n+1}|_{t=0}=
(\dot S_{n+1}-\dot S_n)a^\varepsilon_0,\qquad \delta u^\varepsilon_{n+1}|_{t=0}=
(\dot S_{n+1}-\dot S_n)u^\varepsilon_0.
\end{aligned}
\right.
\end{equation}
For \eqref{G3.18}$_1$, we apply Lemma \ref{Lemma2.5} to get
\begin{align}\label{G3.19}
 \|\delta a^\varepsilon_{n+1}\|_{\widetilde L^\infty_{T_{\varepsilon}}
(\dot B^{\frac d2-1}_{2,1})}\lesssim&\,
\|(\dot S_{n+1}-\dot S_n)a^\varepsilon_0\|_{\dot B^{\frac d2-1}_{2,1}}
+\|\delta u^\varepsilon_n\|_{L^1_{T_{\varepsilon}}(\dot B^{\frac d2}_{2,1})}+ U_{\varepsilon,n-1}( {T_\varepsilon})
\|\delta a^\varepsilon_n\|_{\widetilde L^\infty_{T_{\varepsilon}}
(\dot B^{\frac d2-1}_{2,1})}.  
\end{align}
Similarly, through a direct calculation, for   \eqref{G3.18}$_2$, by applying Lemma \ref{Lemma2.6}, we have
\begin{align} \label{G3.20}
&\|\delta u^\varepsilon_{n+1}\|_{\widetilde L^\infty_{T_{\varepsilon}}
(\dot B^{\frac d2-2}_{2,1})}
+
\|\delta u^\varepsilon_{n+1}\|_{L^1_{T_{\varepsilon}}
(\dot B^{\frac d2}_{2,1})}\nonumber\\
\leq&\,
C\|(\dot S_{n+1}-\dot S_n) u^\varepsilon_0\|_{\dot B^{\frac d2-2}_{2,1}}
+C(\kappa_0+\eta)
\Big(
\|\delta u^\varepsilon_n\|_{\widetilde L^\infty_{T_{\varepsilon}}
(\dot B^{\frac d2-2}_{2,1})}
+
\|\delta u^\varepsilon_n\|_{L^1_{T_{\varepsilon}}
(\dot B^{\frac d2}_{2,1})}
\Big)\nonumber\\
&+
C(\eta+\varepsilon^2{T_{\varepsilon}})
\|\delta a^\varepsilon_n\|_{\widetilde L^\infty_{T_{\varepsilon}}
(\dot B^{\frac d2-1}_{2,1})}.
\end{align}
By combining the estimates \eqref{G3.19} and \eqref{G3.20}, and choosing \(\kappa_0\), \(\eta\), and \(T_\varepsilon\) to be sufficiently small, we obtain
\begin{align*}
&\sum_{n=1}^{N}\Big(\|\delta a^\varepsilon_{n+1}\|_{\widetilde L^\infty_{T_{\varepsilon}}
(\dot B^{\frac d2-1}_{2,1})}+\|\delta u^\varepsilon_{n+1}\|_{\widetilde L^\infty_{T_{\varepsilon}}
(\dot B^{\frac d2-2}_{2,1})}
+
\|\delta u^\varepsilon_{n+1}\|_{L^1_{T_{\varepsilon}}
(\dot B^{\frac d2}_{2,1})}\Big)\nonumber\\
\lesssim&\, \sum_{n=1}^N\|(\dot S_{n+1}-\dot S_n)a^\varepsilon_0\|_{\dot B^{\frac d2-1}_{2,1}}+\|(\dot S_{n+1}-\dot S_n) u^\varepsilon_0\|_{\dot B^{\frac d2-2}_{2,1}} \nonumber\\
\lesssim&\, {\sum_{n\geq 1}^{+\infty}} 2^{-n} 2^{\frac{d}{2}n}\|\dot\Delta_{n} a_0^\varepsilon\|_{L^2}+{\sum_{n\geq 1}^\infty} 2^{-n} 2^{(\frac{d}{2}-1)n}\|\dot\Delta_{n} u_0^\varepsilon\|_{L^2}\nonumber\\
\lesssim&\, \|a^\varepsilon_0\|_{\dot B^{\frac d2}_{2,1}}+\|u_0^\varepsilon\|_{\dot B_{2,1}^{\frac{d}{2}-1}}\lesssim 1,
\end{align*}
for a  fixed integer $N>1$. 
Letting \(N\to\infty\), we obtain
\begin{align*}
 \sum_{n\geq1} {\Big(\|\delta a^\varepsilon_{n+1}\|_{\widetilde L^\infty_{T_{\varepsilon}}
(\dot B^{\frac d2-1}_{2,1})}+\|\delta u^\varepsilon_{n+1}\|_{\widetilde L^\infty_{T_{\varepsilon}}
(\dot B^{\frac d2-2}_{2,1})}
+
\|\delta u^\varepsilon_{n+1}\|_{L^1_{T_{\varepsilon}}
(\dot B^{\frac d2}_{2,1})}\Big)}
 <+\infty.   
\end{align*}
Consequently, for any \(m>n\),
\begin{align*}
&\|(a^\varepsilon_m-{a^\varepsilon_1})
-(a^\varepsilon_n-{a^\varepsilon_1})\|_{\widetilde L^\infty_{T_\varepsilon}
(\dot B^{\frac d2-1}_{2,1})}+\|( u^\varepsilon_m-{u^\varepsilon_1})
-( u^\varepsilon_n-{u^\varepsilon_1})\|_{\widetilde L^\infty_{T_\varepsilon}
(\dot B^{\frac d2-2}_{2,1})}    \nonumber\\
&+\|( u^\varepsilon_m- {u^\varepsilon_1})
-( u^\varepsilon_n-{u^\varepsilon_1})\|_{L^1_{T_\varepsilon}
(\dot B^{\frac d2}_{2,1})}\nonumber\\
\leq &\,{\sum_{k=n+1}^{m}}
\|\delta a^\varepsilon_k\|_{\widetilde L^\infty_{T_\varepsilon}
(\dot B^{\frac d2-1}_{2,1})}+ {\sum_{k=n+1}^{m}}
\Big(
\|\delta u^\varepsilon_k\|_{\widetilde L^\infty_{T_\varepsilon}
(\dot B^{\frac d2-2}_{2,1})}
+
\|\delta u^\varepsilon_k\|_{L^1_{T_\varepsilon}
(\dot B^{\frac d2}_{2,1})}
\Big)
\longrightarrow0,
\end{align*}
as $m,n\rightarrow+\infty$,
which implies that \((a^\varepsilon_n-{a_1^\varepsilon},{u}^\varepsilon_n-{{u}_1^\varepsilon)}\) is a Cauchy sequence in
\begin{align*}
\widetilde L^\infty(0,T_\varepsilon;\dot B^{\frac d2-1}_{2,1})
\times
\big(
\widetilde L^\infty(0,T_\varepsilon;\dot B^{\frac d2-2}_{2,1})
\cap
L^1(0,T_\varepsilon;\dot B^{\frac d2}_{2,1})
\big).
\end{align*}
Consequently, there exists a pair \((a^\varepsilon, u^\varepsilon)\) such that
\begin{equation}  \label{NJKG3.20}
\left\{
\begin{aligned}
& a_n^\varepsilon-{a_1^\varepsilon}\to a^\varepsilon-{a_1^\varepsilon}
\qquad\text{in}\qquad
\widetilde L^\infty(0,T_\varepsilon;\dot B^{\frac d2-1}_{2,1}),\\
& u_n^\varepsilon-{u_1^\varepsilon}\to u^\varepsilon- {u_1^\varepsilon}
\,\,\,\,\,\,\,\,\,\,\,\text{in}\qquad
\widetilde L^\infty(0,T_\varepsilon;\dot B^{\frac d2-2}_{2,1})
\cap
L^1(0,T_\varepsilon;\dot B^{\frac d2}_{2,1}).
\end{aligned}
\right.
\end{equation}
Moreover, the uniform estimates and the Fatou lemma also yield
\begin{equation}  \label{G3.21}
\left\{
\begin{aligned}
& a^\varepsilon\in
\widetilde L^\infty(0,T_\varepsilon;\dot B^{\frac d2}_{2,1}),\\
& u^\varepsilon\in
\widetilde L^\infty(0,T_\varepsilon;\dot B^{\frac d2-1}_{2,1})
\cap
L^1(0,T_\varepsilon;\dot B^{\frac d2+1}_{2,1}),\\
&\varepsilon a^\varepsilon\in
\widetilde L^\infty(0,T_\varepsilon;\dot B^{\frac d2-1}_{2,1})
\cap
L^1(0,T_\varepsilon;
\dot B^{\frac d2+1}_{2,1}+\dot B^{\frac d2}_{2,1}).
\end{aligned}
\right.
\end{equation}

We now pass to the limit in the system \eqref{G3.2} for \(d\geq3\). Let $\varphi\in C_c^\infty([0,T_\varepsilon)\times\mathbb R^d)$ and $\psi\in C_c^\infty([0,T_\varepsilon)\times\mathbb R^d;\mathbb R^d)$.
For the density equation \eqref{G3.2}$_1$, in the sense of distributions and via integration by parts, one gets
\begin{align}\label{G3.22}
&-\int_0^{T_\varepsilon}\!\!\int_{\mathbb R^d}
a^\varepsilon_{n+1}\partial_t\varphi {\rm d}x {\rm d}t
-\int_{\mathbb R^d}
(\dot S_{n+1}a^\varepsilon_0)\varphi(0,x) {\rm d}x-\int_0^{T_\varepsilon}\!\!\int_{\mathbb R^d}
(1+a^\varepsilon_{n+1}) u^\varepsilon_n\cdot\nabla\varphi {\rm d}x{\rm d}t
\nonumber\\    
&\quad=\int_0^{T_\varepsilon}\!\!\int_{\mathbb R^d}
( a^\varepsilon_{n+1}-a^\varepsilon_{n}) {\rm div}\, u^\varepsilon_n
\varphi{\rm d}x{\rm d}t .
\end{align}
The first and second terms on the left-hand side of \eqref{G3.22} pass to the limit by \eqref{NJKG3.20}$_1$ and  the fact that $\dot S_{n + 1}a^\varepsilon_0\to a^\varepsilon_0$ in $\dot B^{\frac{d}{2}-1}_{2,1}$. For the third term on the left-hand side of \eqref{G3.22},  it is easy to check that
\begin{align*}
(1+a^\varepsilon_{n+1}) u^\varepsilon_n
\to
(1+a^\varepsilon) u^\varepsilon
\qquad\text{in}\qquad
L^1(0,T_\varepsilon;{\dot B^{\frac d2-1}_{2,1}}
) .   
\end{align*}
Finally, with the help of \eqref{NJKG3.20}$_1$ and \eqref{G3.21}$_2$, one has
\begin{align*}
\|( a^\varepsilon_{n+1}-a^\varepsilon_{n}) {\rm div}\, u^\varepsilon_n\|_{L^1_{T_{\varepsilon}}
 {(\dot B^{\frac d2-1}_{2,1})}}
\lesssim \|a^\varepsilon_{n+1}-a^\varepsilon_{n}\|_{\widetilde L_{T_{\varepsilon}}^\infty(\dot B_{2,1}^{\frac{d}{2}-1})}
\| u^\varepsilon_n\|_{L^1_{T_{\varepsilon}}(\dot B^{\frac d2+1}_{2,1})}
\to0.    
\end{align*}
Passing to the limit in \eqref{G3.22}, we obtain
\begin{align*}
-\int_0^{T_\varepsilon}\!\!\int_{\mathbb R^d}
a^\varepsilon\partial_t\varphi {\rm d}x{\rm d}t
-\int_{\mathbb R^d}
a_0^\varepsilon\varphi(0,x) {\rm d}x
-\int_0^{T_\varepsilon}\!\!\int_{\mathbb R^d}
(1+a^\varepsilon) u^\varepsilon\cdot\nabla\varphi{\rm d}x{\rm d}t
=0,
\end{align*}
which implies
\begin{align*}
\partial_t a^\varepsilon
+{\rm div}\,  u^\varepsilon =-{\rm div}\,(a^\varepsilon u^{\varepsilon}),    
\end{align*}
in $\mathcal D'((0,T_\varepsilon)\times\mathbb R^d)$.

Next, we consider the velocity equation \eqref{G3.2}$_2$. In weak form, the approximate equation reads
\begin{align}\label{G3.23}
&-\int_0^{T_\varepsilon}\!\!\int_{\mathbb R^d}
 u_{n+1}^\varepsilon\cdot\partial_t\boldsymbol\psi  {\rm d}x{\rm d}t
-\int_{\mathbb R^d}
(\dot S_{n+1} u_0^\varepsilon)\cdot\boldsymbol\psi(0,x) {\rm d}x
\nonumber\\
&-\int_0^{T_\varepsilon}\!\!\int_{\mathbb R^d}
 u_{n+1}^\varepsilon\cdot\mathcal A\boldsymbol\psi {\rm d}x{\rm d}t+\varepsilon^2
\int_0^{T_\varepsilon}\!\!\int_{\mathbb R^d}
\nabla a_n^\varepsilon\cdot\boldsymbol\psi{\rm d}x{\rm d}t\nonumber\\
=&\,
-\int_0^{T_\varepsilon}\!\!\int_{\mathbb R^d}
( u_n^\varepsilon\cdot\nabla u_n^\varepsilon)
\cdot\boldsymbol\psi{\rm d}x{\rm d}t
-
\int_0^{T_\varepsilon}\!\!\int_{\mathbb R^d}
f(a_n^\varepsilon)\mathcal A u_n^\varepsilon
\cdot\boldsymbol\psi{\rm d}x{\rm d}t.
\end{align}
Since  $\dot S_{n+1} u_0^\varepsilon\to u_0^\varepsilon$ in $\mathcal S'(\mathbb R^d)$ and $ u_{n+1}^\varepsilon- u^\varepsilon
\to0$ in $\widetilde L^\infty_{T_\varepsilon}(\dot B^{\frac d2-2}_{2,1})+
L^1_{T_\varepsilon}(\dot B^{\frac d2}_{2,1})$,  the linear terms and the initial term converge on the left-hand side of \eqref{G3.23}.
For the remaining terms, from \eqref{NJKG3.20} and \eqref{G3.21},  we have
\begin{align*}
&\| u_n^\varepsilon\cdot\nabla u_n^\varepsilon
- u^\varepsilon\cdot\nabla u^\varepsilon
\|_{L^1_{T_{\varepsilon}}(\dot B^{\frac d2-2}_{2,1})} \nonumber\\
\lesssim&\,
\| u_n^\varepsilon- u^\varepsilon\|_{\widetilde L^\infty_{T_{\varepsilon}}
(\dot B^{\frac d2-2}_{2,1})}
\|u_n^\varepsilon\|_{L^1_T(\dot B^{\frac d2+1}_{2,1})}
+\| u^\varepsilon\|_{\widetilde L^\infty_{T_{\varepsilon}}
(\dot B^{\frac d2-1}_{2,1})}
\| u_n^\varepsilon- u^\varepsilon\|_{L^1_{T_{\varepsilon}}
(\dot B^{\frac d2}_{2,1})}
\longrightarrow 0,    
\end{align*}
and 
\begin{align*}
&\|f(a_n^\varepsilon)\mathcal A u_n^\varepsilon
-f(a^\varepsilon)\mathcal A u^\varepsilon\|_{L^1_{T_{\varepsilon}}
(\dot B^{\frac d2-2}_{2,1})}\nonumber\\
\lesssim&\,\|(f(a_n^\varepsilon)-f(a^\varepsilon))
\mathcal A u_n^\varepsilon\|_{L^1_{T_{\varepsilon}}
(\dot B^{\frac d2-2}_{2,1})}+\|f(a^\varepsilon)\mathcal A( u_n^\varepsilon- u^\varepsilon)
\|_{L^1_{T_{\varepsilon}}(\dot B^{\frac d2-2}_{2,1})}\nonumber\\
\lesssim&\, 
\|f(a_n^\varepsilon)-f(a^\varepsilon)\|_{\widetilde L^\infty_{T_\varepsilon}
(\dot B^{\frac d2-1}_{2,1})}
\| u_n^\varepsilon\|_{L^1_{T_\varepsilon}
(\dot B^{\frac d2+1}_{2,1})}+\|f(a^\varepsilon)\|_{\widetilde L^\infty_{T_\varepsilon}
(\dot B^{\frac d2}_{2,1})}
\| u_n^\varepsilon- u^\varepsilon\|_{L^1_{T_\varepsilon}
(\dot B^{\frac d2}_{2,1})} \to0.    
\end{align*}
Passing to the limit in \eqref{G3.23}, we end up with
\begin{align*}
\partial_t u^\varepsilon
-\mathcal A u^\varepsilon+\varepsilon^2\nabla a^\varepsilon  =
- u^\varepsilon\cdot\nabla u^\varepsilon - f(a^\varepsilon)\mathcal A {u}^\varepsilon,
\end{align*}
in $\mathcal D'((0,T_\varepsilon)\times\mathbb R^d)$.
Thus, $(a^\varepsilon, u^\varepsilon)$ is a distributional solution of the scaled system \eqref{1.3}--\eqref{1.4}  for \(d\geq3\).

We further discuss the endpoint case \(d=2\). In this case, the difference estimates would have to be performed in
\begin{align*}
 \delta a^\varepsilon_n\in
\widetilde L^\infty(0,T_\varepsilon;\dot B^0_{2,1}),
\qquad
\delta u^\varepsilon_n\in
\widetilde L^\infty(0,T_\varepsilon;\dot B^{-1}_{2,1})
\cap
L^1(0,T_\varepsilon;\dot B^1_{2,1}),   
\end{align*}
and the above \(\ell^1\)-Besov contraction does not close directly at this endpoint.
We therefore use the standard compactness procedure, as in
\cite[Section 3]{Danchin-2018}; see also \cite{Danchin-IM-00}.
More precisely, one constructs smooth approximate solutions by a Friedrichs regularization of the system. The uniform estimates obtained above remain valid and result in $\{a^\varepsilon_n\}_{{n\geq1}}$ being bounded in $\widetilde L^\infty(0,T_\varepsilon;\dot B^1_{2,1})$ and $\{ u^\varepsilon_n\}_{{n\geq1}}$ being bounded in $\widetilde L^\infty(0,T_\varepsilon;\dot B^0_{2,1})\cap L^1(0,T_\varepsilon;\dot B^2_{2,1})$. By direct calculation, we deduce that
 $\{\partial_t a^\varepsilon_n\}_{{n\geq1}}$  and  $\{\partial_t  u^\varepsilon_n\}_{{n\geq1}}$ are bounded in
 $L^1(0,T_\varepsilon;\dot B^0_{2,1})$. Due to the embeddings $\dot B^1_{2,1}(\mathbb R^2)\hookrightarrow \dot H^1_{\rm loc}(\mathbb R^2)\stackrel{c}{\hookrightarrow}\ L^2_{\rm loc}(\mathbb R^2)$, the Aubin--Lions lemma and Cantor's diagonal argument imply that, up to a subsequence,
\begin{equation*}  
\left\{
\begin{aligned}
& a_n^\varepsilon\to a^\varepsilon
\qquad\text{strongly in}\qquad
C([0,T_\varepsilon];L^2_{\rm loc}(\mathbb R^2)),\\
& u_n^\varepsilon\to u^\varepsilon
\,\,\,\,\,\quad\text{strongly in}\qquad
L^2(0,T_\varepsilon;{ L_{\rm loc}^2}(\mathbb R^2)),
\end{aligned}
\right.
\end{equation*}
which, together with the uniform Besov bounds, is
sufficient to pass to the limit in the nonlinear terms of the Friedrichs regularized system. Hence, the limiting pair $(a^\varepsilon, u^\varepsilon)$
is a distributional solution to the system \eqref{1.3}--\eqref{1.4} on \((0,T_\varepsilon)\times\mathbb R^2\).
Finally, by leveraging the Fatou  lemma, we directly get
\begin{equation*}  
\left\{
\begin{aligned}
& a^\varepsilon\in
\widetilde L^\infty(0,T_\varepsilon;\dot B^1_{2,1}),\\
& u^\varepsilon\in
\widetilde L^\infty(0,T_\varepsilon;\dot B^0_{2,1})
\cap
L^1(0,T_\varepsilon;\dot B^2_{2,1}),\\
& \varepsilon a^\varepsilon\in
\widetilde L^\infty(0,T_\varepsilon;\dot B^0_{2,1})
\cap
L^1(0,T_\varepsilon;\dot B^2_{2,1}+\dot B^1_{2,1}).
\end{aligned}
\right.
\end{equation*}

Having constructed a distributional solution with the desired a priori bounds, we now turn to the uniqueness and the time-continuity properties of the solution.

\textbf{Step 6. Uniqueness and time continuity.}
We first prove the uniqueness. Let \((a^\varepsilon_1, u^\varepsilon_1)\) and \((a^\varepsilon_2,u^\varepsilon_2)\) be two solutions to \eqref{1.3}--\eqref{1.4} on \([0,T^\prime]\subset[0,T_\varepsilon]\) with the same initial data and satisfying the regularity obtained above.
Define
\begin{align*}
\delta a:=a^\varepsilon_2-a^\varepsilon_1,
\qquad
\delta u:= u^\varepsilon_2- u^\varepsilon_1.    
\end{align*}
Then \((\delta a,\delta u)\) satisfies
\begin{equation}\label{NZG3.24}
\left\{
\begin{aligned}
&\partial_t\delta a+ u^\varepsilon_2\cdot\nabla\delta a
=
-\delta u\cdot\nabla a^\varepsilon_1
-(1+a^\varepsilon_1){\rm div}\,\delta u
-\delta a{\rm div}\, u^\varepsilon_2,
\\
&\partial_t\delta u-\mathcal A\delta u+\varepsilon^2\nabla\delta a
=
-\delta u\cdot\nabla u^\varepsilon_2
- u^\varepsilon_1\cdot\nabla\delta u
-f(a^\varepsilon_1)\mathcal A\delta u
-\big(f(a^\varepsilon_2)-f(a^\varepsilon_1)\big)
\mathcal A u^\varepsilon_2,
\\
&(\delta a,\delta u)|_{t=0}=(0,0).
\end{aligned}
\right.
\end{equation}

For \(d\geq3\), we repeat the low-order stability estimates used in the
convergence part. Applying Lemma \ref{Lemma2.5} to
\eqref{NZG3.24}\(_1\) in \(\dot B^{\frac d2-1}_{2,1}\), we get
\begin{align}\label{NZG3.25}
\|\delta a\|_{\widetilde L^\infty_t
(\dot B^{\frac d2-1}_{2,1})}
\lesssim&\,\|\delta u\|_{L^1_t(\dot B^{\frac d2}_{2,1})}
+\| u^\varepsilon_2\|_{L^1_t(\dot B^{\frac d2+1}_{2,1})}
\|\delta a\|_{\widetilde L^\infty_t
(\dot B^{\frac d2-1}_{2,1})}.
\end{align}
Similarly, applying Lemma \ref{Lemma2.6}  to
\eqref{NZG3.24}\(_2\) in
\(\dot B^{\frac d2-2}_{2,1}\),  we obtain
\begin{align} \label{NZG3.26}
&\|\delta u\|_{\widetilde L^\infty_t
(\dot B^{\frac d2-2}_{2,1})}
+\|\delta u\|_{L^1_t(\dot B^{\frac d2}_{2,1})}\nonumber\\
\lesssim&\,
 {\kappa_0
\|\delta u\|_{L^1_t(\dot B^{\frac d2}_{2,1})}
+}
\Big(
\| u^\varepsilon_1\|_{L^1_t(\dot B^{\frac d2+1}_{2,1})}
+
\| u^\varepsilon_2\|_{L^1_t(\dot B^{\frac d2+1}_{2,1})}
\Big)
\|\delta u\|_{\widetilde L^\infty_t
(\dot B^{\frac d2-2}_{2,1})}
\nonumber\\
& +
\Big(
\| u^\varepsilon_2\|_{L^1_t(\dot B^{\frac d2+1}_{2,1})}
+\varepsilon^2t
\Big)
\|\delta a\|_{\widetilde L^\infty_t
(\dot B^{\frac d2-1}_{2,1})}.
\end{align}
Define
\begin{align*}
 Y(t):=
\|\delta a\|_{\widetilde L^\infty_t
(\dot B^{\frac d2-1}_{2,1})}
+
\|\delta u\|_{\widetilde L^\infty_t
(\dot B^{\frac d2-2}_{2,1})}
+
\|\delta u\|_{L^1_t(\dot B^{\frac d2}_{2,1})}.   
\end{align*}
Combining \eqref{NZG3.25} and \eqref{NZG3.26}, we infer that
\begin{align*}
 Y(t)\leq
C\bigg(
\kappa_0+\varepsilon^2t
+
\| u^\varepsilon_1\|_{L^1_t
(\dot B^{\frac d2+1}_{2,1})}+\| u^\varepsilon_2\|_{L^1_t
(\dot B^{\frac d2+1}_{2,1})}
\bigg)Y(t).   
\end{align*}
Taking \(\kappa_0>0\) sufficiently small and then taking {\(T^{\prime}>0\)} sufficiently small
such that
\begin{align*}
\kappa_0+\varepsilon^2T^{\prime}
+\|u^\varepsilon_1\|_{L^1_{ {T^{\prime}}}
(\dot B^{\frac d2+1}_{2,1})}+\|u^\varepsilon_2\|_{L^1_{ {T^{\prime}}}
(\dot B^{\frac d2+1}_{2,1})}
\leq \frac{1}{2C}.   
\end{align*}
Therefore, we have $Y(T^{\prime})=0$.
 
For \(d = 2\), the aforementioned \(\ell^1\)-Besov stability estimate is an endpoint estimate and cannot be directly closed.
Define
\begin{align*}
Z(t):=
\|\delta a\|_{\widetilde L^\infty_t(\dot B^0_{2,\infty})}+
\|\delta u\|_{\widetilde L^\infty_t(\dot B^{-1}_{2,\infty})}+
\|\delta u\|_{L^1_t(\dot B^1_{2,\infty})}.    
\end{align*}
By applying the estimates in \cite[Section 3]{Danchin-2018} to \eqref{NZG3.24}, one can obtain
\begin{align}\label{NZG3.27}
Z(t)\lesssim
C\int_0^t
\Gamma(\tau) 
Z(\tau)
\log\bigg(e+\frac{M_{T^\prime}}{Z(\tau)}\bigg){\rm d}\tau, \qquad\text{with}  \qquad Z(0)=0,  
\end{align}
where 
\begin{align*}
\Gamma(t):=
1+
\varepsilon^2+
\sum_{i=1}^2
\Big(
\| u^\varepsilon_i(t)\|_{\dot B^2_{2,1}}
+
\|a^\varepsilon_i(t)\|_{\dot B^1_{2,1}}
\| u^\varepsilon_i(t)\|_{\dot B^2_{2,1}}
\Big)
\in L^1(0,T^\prime),    
\end{align*}
and \(M_{ {T^\prime}}>0\) depends only on the norms of the two solutions on \([0,T^\prime]\).
Applying Osgood's lemma to \eqref{NZG3.27}, it follows that
\begin{align*}
Z(t)=0,\qquad 0\leq t\leq T^\prime.    
\end{align*}
Hence, uniqueness also holds in the endpoint case \(d = 2\).

We ultimately prove the time-continuity properties. This argument is applicable for all \(d\geq2\). We stress that the distinction between \(d\geq3\) and \(d = 2\) has only been utilized in the convergence and uniqueness arguments. Once the solution has been constructed with the regularity stated above, the subsequent time-continuity proof remains the same in both cases.
It is easy to check that $\partial_t a^\varepsilon
\in
L^1(0,T_\varepsilon;\dot B^{\frac d2-1}_{2,1})$.
Since, for fixed \(0<\varepsilon\leq1\), $
\varepsilon a^\varepsilon\in
\widetilde L^\infty(0,T_\varepsilon;\dot B^{\frac d2-1}_{2,1})$, 
we also have $
a^\varepsilon\in
\widetilde L^\infty(0,T_\varepsilon;\dot B^{\frac d2-1}_{2,1})$.
Hence, it holds that $
a^\varepsilon\in
\mathcal C([0,T_\varepsilon];\dot B^{\frac d2-1}_{2,1})$.

We now enhance the continuity of \(a^\varepsilon\) to the critical space \(\dot B^{\frac d2}_{2,1}\).  Let \(J\in\mathbb N\). For
\(0\leq t,t'\leq T_\varepsilon\), we decompose 
\begin{align*}
\|a^\varepsilon(t')-a^\varepsilon(t)\|_{\dot B^{\frac d2}_{2,1}}
{=}
I_J^\ell(t,t')+I_J^m(t,t')+I_J^h(t,t'),    
\end{align*}
where
\begin{align*}
I_J^\ell(t,t')
:=&\,
\sum_{j<-J}
2^{j\frac d2}
\|\dot\Delta_j(a^\varepsilon(t')-a^\varepsilon(t))\|_{L^2},\\
I_J^m(t,t')
:=&\,
\sum_{|j|\leq J}
2^{j\frac d2}
\|\dot\Delta_j(a^\varepsilon(t')-a^\varepsilon(t))\|_{L^2},\\
I_J^h(t,t')
:=&\,
\sum_{j>J}
2^{j\frac d2}
\|\dot\Delta_j(a^\varepsilon(t')-a^\varepsilon(t))\|_{L^2}.
\end{align*}
For the low-frequency part, it holds that
\begin{align*}
I_J^\ell(t,t')
\leq
2^{-J}
\|a^\varepsilon(t')-a^\varepsilon(t)\|_{\dot B^{\frac d2-1}_{2,1}}
\leq
C2^{-J},
\end{align*}
for some fixed $J$. In addition, the middle-frequency part satisfies
\begin{align*}
 I_J^m(t,t')\leq
C_J
\|a^\varepsilon(t')-a^\varepsilon(t)\|_{\dot B^{\frac d2-1}_{2,1}}
\to0
\qquad\text{as }\qquad t'\to t.   
\end{align*}
 For the high-frequency part,  
we get
\begin{align*}
I_J^h(t,t')
\leq
2\sum_{j>J}
2^{j\frac d2}
\sup_{\tau\in[0,T_\varepsilon]}
\|\dot\Delta_j a^\varepsilon(\tau)\|_{L^2}
\to0
\qquad\text{as }\qquad J\to\infty.    
\end{align*}
Combining the above three estimates gives
\begin{align*}
a^\varepsilon\in
C([0,T_\varepsilon];\dot B^{\frac d2}_{2,1}).    
\end{align*}
Proving the continuity of $u$ is exactly the same. Thus, we complete the proof of Theorem \ref{Th0}.
\end{proof}

\section{Proof of global well-posedness}

This section is devoted to the proof of the uniform-in-\(\varepsilon\) global well-posedness of solutions to the reformulated Cauchy problem \eqref{1.3}--\eqref{1.4}.
We begin by deriving {\it uniform-in-time} a priori estimates, which will be used to prove Theorem \ref{Th1}.


\subsection{A priori estimates}

\begin{prop}[Uniform a priori estimates]\label{prop3.1}
 Let \(0<\varepsilon<1\), \(T>0\) and 
\((a^\varepsilon,u^\varepsilon)\) be a strong solution to the Cauchy problem
\eqref{1.3}--\eqref{1.4} on \([0,T]\) satisfying
\begin{align}\label{NJKG4.1}
|a^\varepsilon(t,x)|\le \frac12,
\qquad (t,x)\in[0,T]\times\mathbb R^d.  
\end{align}
Then there exists a constant \(C_0>0\), independent of \(T\) and
\(\varepsilon\), such that for all \(t\in(0,T)\),
\begin{align}\label{NJKG3.1}
\mathcal X_\varepsilon(t)
\le
C_0\mathcal X_{\varepsilon,0}
+C_0\mathcal X_\varepsilon^2(t),
\end{align}
where \(\mathcal X_{\varepsilon,0}\) and \(\mathcal X_\varepsilon(t)\) are
defined by \eqref{NJK1.13} and \eqref{NJK1.15}, respectively.
Furthermore, there exists a sufficiently small constant \(\eta_0>0\) such
that if
\begin{align}\label{3.1}
\mathcal X_\varepsilon(t)\le \eta_0,
\end{align}
then
\begin{align}\label{NJKG3.3}
\mathcal X_\varepsilon(t)
\le
2C_0\mathcal X_{\varepsilon,0}.
\end{align}
\end{prop}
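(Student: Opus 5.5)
The plan is to estimate $\mathcal X_\varepsilon(t)$ separately on the three frequency regimes. Each estimate is linear in the data plus quadratic error terms, and the second statement is then a routine continuity argument: if $\mathcal X_\varepsilon\le\eta_0$ with $C_0\eta_0\le\frac12$, then \eqref{NJKG3.1} gives $\mathcal X_\varepsilon\le C_0\mathcal X_{\varepsilon,0}+\frac12\mathcal X_\varepsilon$, which is \eqref{NJKG3.3}. So the work is entirely in proving \eqref{NJKG3.1}.

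\textbf{Low frequencies} $k\le J_0^\varepsilon$. I localize \eqref{1.6.1} with $\dot\Delta_k$ and work with the unknowns $(\varepsilon a^\varepsilon,u^\varepsilon)$. In these variables the linear coupling becomes the symmetric pair $\varepsilon\,\dv u$ and $\varepsilon\nabla(\varepsilon a)$.
\begin{itemize}
\item An $L^2$ energy for $(\varepsilon\dot\Delta_k a,\dot\Delta_k u)$ is corrected by a cross term $\eta\,\varepsilon^{-1}\langle \dot\Delta_k u,\nabla\dot\Delta_k(\varepsilon a)\rangle 2^{-2k}$-type weighted suitably.
\item Since $|\xi|\lesssim\varepsilon$, this produces damping $\varepsilon^2$ on $\varepsilon a$. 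By Bernstein, $\varepsilon^2\|\varepsilon a\|^\ell_{\dot B^{d/2-1}}\gtrsim 2^{2k}\cdot$ the same quantity, which gives the $L^1(\dot B^{d/2+1})$ dissipation of $\varepsilon a^\ell$ and of $u^\ell$.
\item The nonlinear terms are bounded in $\dot B^{d/2-1}_{2,1}$ using Lemma~\ref{LA.3}. Whenever $a^\varepsilon$ appears in low frequencies I replace $\|a\|^\ell_{\dot B^{d/2}}$ by $\varepsilon\|a\|^\ell_{\dot B^{d/2-1}}$, and I split $a=a^\ell+a^{m+h}$ elsewhere.
\item The term $\varepsilon\,\dv(au)$ must be placed in $L^1(\dot B^{d/2-1})$. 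I control it by $\big(\varepsilon\|a\|^\ell_{\widetilde L^\infty\dot B^{d/2-1}}+\|a\|^{m+h}_{\widetilde L^\infty\dot B^{d/2}}\big)\|u\|_{L^1\dot B^{d/2+1}}$, using $\varepsilon<1$.
\end{itemize}

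\textbf{Medium frequencies} $J_0^\varepsilon<k\le J_1^\varepsilon$. I do a similar energy estimate in the unknowns $(\varepsilon\,2^k a, u)$, i.e.\ $a$ in $\dot B^{d/2}$. The cross term now yields damping of size $\varepsilon^2$ for both components. This gives $\varepsilon^2\|a\|^m_{L^1\dot B^{d/2}}$ but only $\varepsilon^2\|u\|^m_{L^1\dot B^{d/2+1}}$.
\begin{itemize}
\item To recover the full $\|u\|^m_{L^1\dot B^{d/2+1}}$, I apply the Lamé maximal regularity Lemma~\ref{Lemma2.6}, treating $\varepsilon^2\nabla a$ as a source.
\item This source satisfies $\varepsilon^2\|\nabla a\|^m_{L^1\dot B^{d/2-1}}=\varepsilon^2\|a\|^m_{L^1\dot B^{d/2}}$, which is already controlled.
\item The transport term $u\cdot\nabla a$ in the $a$-equation is handled by the commutator estimate, Lemma~\ref{LA.5}, to avoid losing a derivative.
\end{itemize}

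\textbf{High frequencies} $k>J_1^\varepsilon$. I use the effective velocity $\omega^\varepsilon=\nabla(-\Delta)^{-1}\big(\frac{\varepsilon^2a^\varepsilon}{2\mu+\lambda}-\dv u^\varepsilon\big)$.
\begin{itemize}
\item $\omega^\varepsilon$ satisfies a heat equation with lower-order sources $\varepsilon^2\omega$ and $\varepsilon^4\nabla(-\Delta)^{-1}a$, plus nonlinear terms.
\item $a$ satisfies a damped transport equation $\partial_t a+u\cdot\nabla a+\frac{\varepsilon^2}{2\mu+\lambda}a=\dots$ with $\dv\omega$ as a source. I estimate it in $\dot B^{d/2}$ via commutators, Lemma~\ref{LA.5}.
\item By \eqref{1.13.1}, the linear sources are bounded by $\varepsilon^{2\delta_c}\|\omega\|^h_{L^1\dot B^{d/2+1}}$ and $\varepsilon^{2+2\delta_c}\|a\|^h_{L^1\dot B^{d/2}}$. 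Both are absorbed because $\varepsilon^{2\delta_c}\le \varepsilon_*$ small. For $\varepsilon$ not small, I instead choose the constant in $J_1$ large.
\item $\mathbb P u^h$ is treated directly by the heat estimate.
\end{itemize}

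\textbf{Main obstacle.} The hard step is the nonlinear terms that cross regimes. The viscous term $f(a)\Delta u$ and the term $a\,\dv u$, measured in $\|\cdot\|^{m}$ and $\|\cdot\|^{h}$, involve only the weak control of $a^\ell$. I would use Bony's decomposition and the inequality $\|a^\ell\|_{\dot B^{d/2}}\lesssim\varepsilon\|a\|^\ell_{\dot B^{d/2-1}}$ to bound them by $\mathcal X_\varepsilon^2$. For $f(a)$ I would use Lemma~\ref{LA.4} together with \eqref{NJKG4.1}.

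Summing the three regimes then yields \eqref{NJKG3.1}.
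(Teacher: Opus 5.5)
Your plan has the same architecture as the paper's proof: a low-frequency energy for $(\varepsilon a^\varepsilon,u^\varepsilon)$ with a cross term; a hypocoercive medium-frequency energy followed by Lam\'e maximal regularity with $\varepsilon^2\nabla a^\varepsilon$ as a source; the same effective velocity $\omega^\varepsilon$ at high frequencies; and the absorption argument for \eqref{NJKG3.3}. However, the one nonlinear estimate you state explicitly is false. Treating $\varepsilon\dv(a^\varepsilon u^\varepsilon)$ as a source, you claim
\[
\varepsilon\|\dv(a^\varepsilon u^\varepsilon)\|^\ell_{L^1_t(\dot B^{\frac d2-1}_{2,1})}\lesssim\Big(\varepsilon\|a^\varepsilon\|^\ell_{\widetilde L^\infty_t(\dot B^{\frac d2-1}_{2,1})}+\|a^\varepsilon\|^{m+h}_{\widetilde L^\infty_t(\dot B^{\frac d2}_{2,1})}\Big)\|u^\varepsilon\|_{L^1_t(\dot B^{\frac d2+1}_{2,1})}.
\]
Already for $a=a^\ell$ this requires $\|\dv(au)\|^\ell_{\dot B^{\frac d2-1}_{2,1}}\lesssim\|a\|_{\dot B^{\frac d2-1}_{2,1}}\|u\|_{\dot B^{\frac d2+1}_{2,1}}$. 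That lies outside the range of \eqref{A.3} and is false. The reason is that the paraproduct $T_ua$ needs $\|\dot S_{k-1}u\|_{L^\infty}$, and $\dot B^{\frac d2+1}_{2,1}$ gives no $L^\infty$ control of the very low frequencies of $u$. Concretely, let $a$ be a wave packet at frequency $2^k\ll\varepsilon$ oscillating in $x_1$, and let $u=u_1e_1$ be spectrally localized at $2^j$ with $j\ll k$ and nearly constant on the support of $a$. Then the left side is $\approx2^{k\frac d2}\|u\|_{L^\infty}\|a\|_{L^2}$ and the right side is $\approx2^{k(\frac d2-1)}2^{j}\|u\|_{L^\infty}\|a\|_{L^2}$, so the ratio $2^{k-j}$ is unbounded.

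There are two ways out. The paper's repair, \eqref{3.14}, keeps the conservative form and puts both factors in $L^2_t(\dot B^{\frac d2}_{2,1})$:
\[
\varepsilon\|a^\varepsilon u^\varepsilon\|_{L^1_t(\dot B^{\frac d2}_{2,1})}\lesssim\|\varepsilon a^\varepsilon\|_{L^2_t(\dot B^{\frac d2}_{2,1})}\|u^\varepsilon\|_{L^2_t(\dot B^{\frac d2}_{2,1})}.
\]
It then interpolates $\varepsilon(a^\varepsilon)^\ell$ between $\widetilde L^\infty_t(\dot B^{\frac d2-1}_{2,1})$ and $L^1_t(\dot B^{\frac d2+1}_{2,1})$, and uses H\"older for $\varepsilon(a^\varepsilon)^{m+h}$ between $\widetilde L^\infty_t(\dot B^{\frac d2}_{2,1})$ and $\varepsilon^2L^1_t(\dot B^{\frac d2}_{2,1})$. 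This is exactly where the time-integrated density norms in $\mathcal X_\varepsilon$ are needed.

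Alternatively, your right-hand side is attainable if you work with \eqref{localG3.1} instead. Keep $u^\varepsilon\cdot\nabla a^\varepsilon$ in transport form and use Lemma~\ref{LA.5} in the basic energy. In the cross term, pair $\langle\dot\Delta_ku^\varepsilon,\nabla\dot\Delta_k(u^\varepsilon\cdot\nabla a^\varepsilon)\rangle$ with $\langle\dot\Delta_k(u^\varepsilon\cdot\nabla u^\varepsilon),\nabla\dot\Delta_ka^\varepsilon\rangle$, as the paper does in \eqref{4.24}, and use $\|\nabla\dot\Delta_ka^\varepsilon\|_{L^2}\lesssim\varepsilon\|\dot\Delta_ka^\varepsilon\|_{L^2}$ for $k\le J_0^\varepsilon$. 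The paper's own \eqref{4.53} uses the same invalid product bound for the source $\varepsilon^2\dv(a^\varepsilon u^\varepsilon)$ in the $\omega^\varepsilon$-equation. There no transport structure is available, so the first repair is the one to use.

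Some smaller points:
\begin{itemize}
\item At low frequencies the damping is $2^{2k}$, not $\varepsilon^2$: for $|\xi|\ll\varepsilon$ the eigenvalues have real part $-\frac{2\mu+\lambda}{2}|\xi|^2$.
\item The cross term must carry an $O(1)$ weight, as in the paper's $\eta_1\langle\dot\Delta_ku^\varepsilon,\nabla\dot\Delta_ka^\varepsilon\rangle$. A $2^{-2k}$ weight destroys both the equivalence $E_{l,k}\sim\|\dot\Delta_k(\varepsilon a^\varepsilon,u^\varepsilon)\|_{L^2}^2$ and the absorption of $\|\dv\dot\Delta_ku^\varepsilon\|_{L^2}^2$.
\item At medium frequencies the unknown must be $(2^ka^\varepsilon,u^\varepsilon)$, not $(\varepsilon2^ka^\varepsilon,u^\varepsilon)$, because $\mathcal X_\varepsilon$ contains $\|a^\varepsilon\|^m_{\widetilde L^\infty_t(\dot B^{\frac d2}_{2,1})}$ with no factor $\varepsilon$.
\item The terms you single out as the main obstacle are in fact routine: the paper uses $\|a^\varepsilon\|_{\dot B^{\frac d2}_{2,1}}\lesssim\varepsilon\|a^\varepsilon\|^\ell_{\dot B^{\frac d2-1}_{2,1}}+\|a^\varepsilon\|^{m+h}_{\dot B^{\frac d2}_{2,1}}$ plus standard product laws.
\item Your remark that $\varepsilon^{2\delta_c}$ is not small for $\varepsilon$ near $1$, so that $J_1^\varepsilon$ must be shifted by a large constant, is correct. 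The paper's absorption step \eqref{4.51}--\eqref{4.52} passes over this.
\end{itemize}
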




Since the proof of Proposition \ref{prop3.1} is lengthy and complicated, it is divided into Lemmas \ref{L4.2} to \ref{L4.5}, in which we estimate the low-, medium-, and high-frequency regimes of $(a^\varepsilon,u^\varepsilon)$, respectively.

\subsubsection{Low-frequency regime}
\begin{lem}\label{L4.2}
Let $(a^\varepsilon, {u}^\varepsilon)$ be a strong solution to the Cauchy problem \eqref{1.3}--\eqref{1.4} on $[0,T]\times \mathbb{R}^d$. Then, under the condition $\eqref{NJKG4.1}$, we have
 \begin{align}\label{3.2.1}
&\|(\varepsilon a^\varepsilon,u^\varepsilon)\|_{\widetilde L_t^\infty(\dot B_{2,1}^{\frac{d}{2}-1})}^\ell
+\|(\varepsilon a^\varepsilon,u^\varepsilon)\|_{ L^1_t(\dot B_{2,1}^{\frac{d}{2}+1})}^\ell
\lesssim 
\varepsilon\|a^\varepsilon_0\|_{\dot B_{2,1}^{\frac{d}{2}-1})}^\ell
+\|u^\varepsilon_0\|_{\dot B_{2,1}^{\frac{d}{2}-1})}^\ell+\mathcal{X}_{\varepsilon}^2(t).
\end{align}
\end{lem}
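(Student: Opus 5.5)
We work with the symmetrized unknowns $(\varepsilon a^\varepsilon, u^\varepsilon)$, for which the linearized system \eqref{1.7} becomes
\begin{align*}
\partial_t(\varepsilon a^\varepsilon)+\varepsilon\dv u^\varepsilon=-\varepsilon\dv(a^\varepsilon u^\varepsilon),\qquad
\partial_t u^\varepsilon-\mathcal A u^\varepsilon+\varepsilon\nabla(\varepsilon a^\varepsilon)=g^\varepsilon,
\end{align*}
where $g^\varepsilon:=-u^\varepsilon\cdot\nabla u^\varepsilon-f(a^\varepsilon)\mathcal A u^\varepsilon$. In these variables the coupling terms are skew-symmetric. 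We apply $\dot\Delta_k$ with $k\le J_0^\varepsilon$ and carry out a localized $L^2$ energy estimate. We add a hypocoercive cross term $\eta_*\varepsilon\,2^{-2k}\int \dot\Delta_k u^\varepsilon\cdot\nabla\dot\Delta_k(\varepsilon a^\varepsilon)$; here we should use the $\varepsilon$-scaled version $\eta_* 2^{-2k}\varepsilon\langle\cdot\rangle$, which is natural since $2^k\le\varepsilon$. The resulting Lyapunov functional $\mathcal L_k\approx\|\dot\Delta_k(\varepsilon a^\varepsilon,u^\varepsilon)\|_{L^2}^2$ then satisfies
\begin{align*}
\frac{d}{dt}\mathcal L_k+c\,2^{2k}\mathcal L_k\lesssim \|\dot\Delta_k(\varepsilon\dv(a^\varepsilon u^\varepsilon), g^\varepsilon)\|_{L^2}\sqrt{\mathcal L_k}.
\end{align*}

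The dissipation for $\varepsilon a$ arises from the cross term, which produces $\varepsilon^2|\nabla\dot\Delta_k(\varepsilon a)|^2\cdot 2^{-2k}\approx\varepsilon^2\|\dot\Delta_k(\varepsilon a)\|^2$. Since $2^{2k}\le \varepsilon^2$ in the low-frequency regime, this dominates $2^{2k}\|\dot\Delta_k\varepsilon a\|^2$, which is exactly the spectral picture $\mathrm{Re}\,\lambda_{1,2}\approx -|\xi|^2$ for $|\xi|\ll\varepsilon$. The remaining cross contributions, $\varepsilon 2^{-2k}\cdot 2^{2k}\cdot 2^{k}$ times products, are bounded by $\varepsilon 2^k\|u\|\|\varepsilon a\|\le$ small multiples of the dissipation provided $\eta_*$ is small; the key point is to check that every such factor is bounded by a power of $2^k/\varepsilon\le1$. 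Dividing by $\sqrt{\mathcal L_k}$, integrating in time, multiplying by $2^{k(\frac d2-1)}$ and summing over $k\le J_0^\varepsilon$ gives the left-hand side of \eqref{3.2.1}, bounded by the initial data plus $\|\varepsilon\dv(a^\varepsilon u^\varepsilon)\|^\ell_{L^1_t(\dot B^{\frac d2-1}_{2,1})}+\|g^\varepsilon\|^\ell_{L^1_t(\dot B^{\frac d2-1}_{2,1})}$. This route avoids any commutator, since the convection term $u\cdot\nabla a$ is treated as a source in divergence form.

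It remains to bound the nonlinear terms by $\mathcal X_\varepsilon^2$. For $\varepsilon\dv(a u)$ we use $\|\varepsilon\dv(au)\|^\ell_{\dot B^{\frac d2-1}}\lesssim\varepsilon\|au\|_{\dot B^{\frac d2}}$ and split $a=a^\ell+a^{m+h}$. For the low part we apply \eqref{A.2}, the bound $\|a^\ell\|_{L^\infty}\lesssim\|a\|^\ell_{\dot B^{\frac d2}}\lesssim\varepsilon\|a\|^\ell_{\dot B^{\frac d2-1}}$ from \eqref{1.13.1}, and $\varepsilon\|a^\ell\|_{\dot B^{\frac d2}}\lesssim\varepsilon\|a\|^\ell_{\dot B^{\frac d2+1}}$ paired with $\|u\|_{L^\infty}$; in time we use $L^\infty_t\times L^1_t$ throughout. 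For the $m+h$ part we use $\|a\|^{m+h}_{\widetilde L^\infty(\dot B^{\frac d2})}\|u\|_{L^1(\dot B^{\frac d2+1})}$ together with $\varepsilon\le1$. For $g^\varepsilon$, \eqref{A.3} with $s_1=\frac d2$ and $s_2=\frac d2-1$ gives $\|f(a)\mathcal A u\|_{\dot B^{\frac d2-1}}\lesssim\|a\|_{\dot B^{\frac d2}}\|u\|_{\dot B^{\frac d2+1}}$, using Lemma \ref{LA.4} and \eqref{NJKG4.1}. Here $\|a\|_{\widetilde L^\infty(\dot B^{\frac d2})}\lesssim \varepsilon\|a\|^\ell_{\widetilde L^\infty(\dot B^{\frac d2-1})}+\|a\|^{m+h}_{\widetilde L^\infty(\dot B^{\frac d2})}\le\mathcal X_\varepsilon$. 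Similarly $\|u\cdot\nabla u\|_{\dot B^{\frac d2-1}}\lesssim\|u\|_{\dot B^{\frac d2-1}}\|u\|_{\dot B^{\frac d2+1}}$, and the full norms of $u$ are controlled by $\mathcal X_\varepsilon$ by combining the $\ell$ and $m+h$ pieces.

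The main obstacle I expect is the uniformity in $\varepsilon$ of the Lyapunov functional, namely choosing the weight of the cross term so that the equivalence $\mathcal L_k\approx\|\cdot\|^2$ and the absorption of every error term hold with constants independent of both $k\le J_0^\varepsilon$ and $\varepsilon$. The boundary frequency $k=J_0^\varepsilon$, where $2^k\sim\varepsilon$, needs care. If the scalar weight fails there, I would instead diagonalize using the explicit eigenvalues $\lambda_{1,2}$ and check directly that $\mathrm{Re}\,\lambda\le -c|\xi|^2$ for $|\xi|\le 2\varepsilon$ with $c$ uniform.
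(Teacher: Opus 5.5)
Your overall architecture is the paper's: a localized $L^2$ estimate plus a cross term, $\dv(a^\varepsilon u^\varepsilon)$ kept as a source, then product laws. However, two steps fail.

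\textbf{The cross-term weight.} Write $b:=\varepsilon a^\varepsilon$, $b_k:=\dot\Delta_k b$, $u_k:=\dot\Delta_k u^\varepsilon$. Up to nonlinear terms,
\[
\frac{d}{dt}\langle u_k,\nabla b_k\rangle+\varepsilon\|\nabla b_k\|_{L^2}^2=\varepsilon\|\dv u_k\|_{L^2}^2+\langle\mathcal A u_k,\nabla b_k\rangle .
\]
Adding this with a weight $\theta_k$ requires two things. First, $\theta_k2^k\ll1$, so that $\theta_k\langle u_k,\nabla b_k\rangle$ is only a perturbation of $\|(b_k,u_k)\|_{L^2}^2$. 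Second, $\theta_k\varepsilon\ll1$, so that the positive term $\theta_k\varepsilon\|\dv u_k\|_{L^2}^2$ is absorbed by the viscous dissipation. For your $\theta_k=\eta_*\varepsilon2^{-2k}$ these quantities are $\eta_*\varepsilon2^{-k}$ and $\eta_*(\varepsilon2^{-k})^2$. Since $2^k\le\varepsilon$, they are powers of $\varepsilon/2^k\ge1$, not of $2^k/\varepsilon$, and they blow up as $k\to-\infty$. So $\mathcal L_k$ is neither equivalent to the energy nor decreasing, and the breakdown is deep in the low frequencies, not at $k=J_0^\varepsilon$.

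The rate you aim for is in fact unattainable. For $|\xi|\ll\varepsilon$ the modes are $-\frac{2\mu+\lambda}{2}|\xi|^2\pm i\varepsilon|\xi|$, so the energy oscillates between $b$ and $u$ and decays only like $e^{-c|\xi|^2t}$. No functional equivalent to $\|(b_k,u_k)\|_{L^2}^2$ can damp $b_k$ at rate $\varepsilon^2\gg2^{2k}$.

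The correct weight is $\theta_k=\eta/\varepsilon$, i.e.\ the paper's $\eta_1\langle\dot\Delta_ku^\varepsilon,\nabla\dot\Delta_ka^\varepsilon\rangle$. Then $\theta_k2^k\lesssim\eta$ and $\theta_k\varepsilon=\eta$. The gained dissipation is $\eta\|\nabla b_k\|_{L^2}^2\approx\eta2^{2k}\|b_k\|_{L^2}^2$, and $\theta_k|\langle\mathcal Au_k,\nabla b_k\rangle|\lesssim\eta(2^k/\varepsilon)2^{2k}\|u_k\|_{L^2}\|b_k\|_{L^2}$ is absorbed. This gives your differential inequality uniformly in $k\le J_0^\varepsilon$ and $\varepsilon$, so the diagonalization fallback is unnecessary. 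That fallback is also delicate: near $|\xi|=2\varepsilon/(2\mu+\lambda)$ one has $\lambda_1=\lambda_2$, so a bound on $\mathrm{Re}\,\lambda_{1,2}$ alone does not control the Green matrix uniformly there.

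\textbf{The bound for $\varepsilon\dv(a^\varepsilon u^\varepsilon)$.} An $L^\infty_t\times L^1_t$ pairing through \eqref{A.2} does not close. That inequality needs $\|u^\varepsilon\|_{L^\infty}$ and $\|u^\varepsilon\|_{\dot B^{\frac d2}_{2,1}}$. But $\mathcal X_\varepsilon$ controls $u^\varepsilon$ only in $\widetilde L^\infty_t(\dot B^{\frac d2-1}_{2,1})\cap L^1_t(\dot B^{\frac d2+1}_{2,1})$, hence in neither $L^\infty_t(L^\infty)$ nor $L^1_t(\dot B^{\frac d2}_{2,1})$.

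The inequality $\varepsilon\|a^\ell\|_{\dot B^{\frac d2}_{2,1}}\lesssim\varepsilon\|a\|^\ell_{\dot B^{\frac d2+1}_{2,1}}$ is backwards. For $k\le J_0^\varepsilon$ only $\|a\|^\ell_{\dot B^{\frac d2+1}_{2,1}}\lesssim\varepsilon\|a\|^\ell_{\dot B^{\frac d2}_{2,1}}$ holds.

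Your bound for the $m+h$ part by $\|a^\varepsilon\|^{m+h}_{\widetilde L^\infty_t(\dot B^{\frac d2}_{2,1})}\|u^\varepsilon\|_{L^1_t(\dot B^{\frac d2+1}_{2,1})}$ is false. The paraproduct of the very low frequencies of $u^\varepsilon$ with $\dot\Delta_ja^\varepsilon$, for $j$ just above $J_0^\varepsilon$, lands in the low regime. It costs $\|\dot S_{j-1}u^\varepsilon\|_{L^\infty}$, which $\|u^\varepsilon\|_{\dot B^{\frac d2+1}_{2,1}}$ does not control.

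The missing ingredients are the damping term $\varepsilon^2\|a^\varepsilon\|^{m+h}_{L^1_t(\dot B^{\frac d2}_{2,1})}$ in $\mathcal X_\varepsilon$ and an $L^2_t\times L^2_t$ pairing, as in \eqref{3.14}:
\[
\varepsilon\|a^\varepsilon u^\varepsilon\|_{L^1_t(\dot B^{\frac d2}_{2,1})}\lesssim\|\varepsilon a^\varepsilon\|_{L^2_t(\dot B^{\frac d2}_{2,1})}\|u^\varepsilon\|_{L^2_t(\dot B^{\frac d2}_{2,1})}.
\]
Here $u^\varepsilon$ is handled by interpolation. The low part of $\varepsilon a^\varepsilon$ is handled by interpolating $\varepsilon\|a^\varepsilon\|^\ell_{\widetilde L^\infty_t(\dot B^{\frac d2-1}_{2,1})}$ and $\varepsilon\|a^\varepsilon\|^\ell_{L^1_t(\dot B^{\frac d2+1}_{2,1})}$. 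The $m+h$ part is handled by interpolating $\|a^\varepsilon\|^{m+h}_{\widetilde L^\infty_t(\dot B^{\frac d2}_{2,1})}$ and $\varepsilon^2\|a^\varepsilon\|^{m+h}_{L^1_t(\dot B^{\frac d2}_{2,1})}$.

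The low part alone could stay in $L^\infty_t\times L^1_t$ via negative-index paraproducts, $\|fg\|_{\dot B^{\frac d2}_{2,1}}\lesssim\|f\|_{\dot B^{\frac d2-1}_{2,1}}\|g\|_{\dot B^{\frac d2+1}_{2,1}}+\|f\|_{\dot B^{\frac d2+1}_{2,1}}\|g\|_{\dot B^{\frac d2-1}_{2,1}}$, but not via \eqref{A.2}. Your treatment of $u^\varepsilon\cdot\nabla u^\varepsilon$ and $f(a^\varepsilon)\mathcal Au^\varepsilon$ is fine and matches the paper.
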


\begin{proof}
Applying the operator $\dot\Delta_k$ to \eqref{1.3}, one has
\begin{equation}\label{3.2}
\left\{
\begin{aligned}
&\partial_t\dot\Delta_ka^\varepsilon+
\dv\dot\Delta_k u^\varepsilon=-\dot\Delta_k\dv (a^\varepsilon u^\varepsilon),\\
&\partial_t \dot\Delta_ku^\varepsilon-\mathcal{A}
\dot\Delta_k u^\varepsilon
+\varepsilon^2\nabla \dot\Delta_ka^\varepsilon=-\dot\Delta_k(u^\varepsilon\cdot\nabla u^\varepsilon)
-\dot\Delta_k(f(a^\varepsilon)\mathcal{A} u^\varepsilon).
\end{aligned}
\right.
\end{equation}
Multiplying $\eqref{3.2}_1$ by $\varepsilon^2\dot\Delta_k a^{\varepsilon}$ and integrating over $\mathbb{R}^d$, we get
\begin{align}\label{3.3}
\frac{\varepsilon^2}{2}\frac{{\rm d}}{{\rm d}t}\|\dot\Delta_k a^\varepsilon\|_{L^2}^2
 +\varepsilon^2\langle \dv\dot\Delta_k u^\varepsilon, \dot\Delta_k a^\varepsilon \rangle
\lesssim\varepsilon^22^k\|\Delta_k(a^{\varepsilon}u^{\varepsilon})\|_{L^2}
\|\Delta_ka^{\varepsilon}\|_{L^2}.
\end{align}
Taking the inner product of $\eqref{3.2}_2$ and $\dot\Delta_k u^{\varepsilon}$ in $L^2$, we have 
\begin{align}\label{3.4}
&\frac{1}{2}\frac{{\rm d}}{{\rm d}t}\|\dot\Delta_k u^{\varepsilon}\|_{L^2}^2
+\mu \|\nabla \dot\Delta_k u^{\varepsilon}\|_{L^2}^2+(\mu+\lambda)\|{\rm div}\,\dot\Delta_k u^{\varepsilon}\|_{L^2}^2
+\varepsilon^2\langle\dot\Delta_k u^\varepsilon,\nabla\dot\Delta_k a^\varepsilon \rangle
\nonumber\\
\lesssim&  \|\dot\Delta_k u^{\varepsilon}\|_{L^2}
\big(\|\dot\Delta_k(u^{\varepsilon}\cdot\nabla u^{\varepsilon})\|_{L^2}
+\|\dot\Delta_k(f(a^{\varepsilon})\mathcal{A} u^{\varepsilon})\|_{L^2}\big).
\end{align}
Thus, adding \eqref{3.3} to \eqref{3.4} and employing integration by parts, we arrive at 
\begin{align}\label{3.5}
&\frac{1}{2}\frac{{\rm d}}{{\rm d}t}(\|\dot\Delta_k u^{\varepsilon}\|_{L^2}^2
+\varepsilon^2\|\dot\Delta_k a^\varepsilon\|_{L^2}^2)
+\mu \|\nabla \dot\Delta_k u^{\varepsilon}\|_{L^2}^2+(\mu+\lambda)\|{\rm div}\,\dot\Delta_k u^{\varepsilon}\|_{L^2}^2
\nonumber\\
\lesssim&  \|\dot\Delta_k(\varepsilon a^{\varepsilon}, u^{\varepsilon})\|_{L^2}
\big(\varepsilon2^k\|\Delta_k(a^{\varepsilon}u^{\varepsilon})\|_{L^2}+\|\dot\Delta_k(u^{\varepsilon}\cdot\nabla u^{\varepsilon})\|_{L^2}+\|\dot\Delta_k(f(a^{\varepsilon})\mathcal{A}u^{\varepsilon})\|_{L^2}\big).
\end{align}

In addition, multiplying $\eqref{3.2}_2$ by $\nabla\dot\Delta_ka^{\varepsilon}$ and integrating over $\mathbb{R}^d$ yield 
\begin{align}\label{3.6}
&\frac{{\rm d}}{{\rm d}t}\langle\dot\Delta_k u^\varepsilon,\nabla\dot\Delta_k a^\varepsilon \rangle
+\varepsilon^2\|\nabla\dot\Delta_k a^\varepsilon\|^2_{L^2}
-\langle\dot\Delta_k u^\varepsilon,\nabla\dot\Delta_k \partial_ta^\varepsilon \rangle
\nonumber\\
\lesssim
&\,\|\nabla\dot\Delta_k a^{\varepsilon}\|_{L^2}
\big(\|\dot\Delta_k(u^{\varepsilon}\cdot\nabla u^{\varepsilon})\|_{L^2}+\|\dot\Delta_k(f(a^{\varepsilon})\mathcal{A} u^{\varepsilon})\|_{L^2}\big)+\langle\dot\Delta_k\mathcal{A} {u}^\varepsilon,\nabla\dot\Delta_k a^\varepsilon \rangle.
\end{align}
By virtue of $\eqref{1.3}_1$ and integration by parts, we further get 
\begin{align}\label{3.7}
&\frac{{\rm d}}{{\rm d}t}\langle\dot\Delta_k u^\varepsilon,\nabla\dot\Delta_k a^\varepsilon \rangle
+\varepsilon^2\|\nabla\dot\Delta_k a^\varepsilon\|^2_{L^2}
-\|\dv\Delta_k u^\varepsilon\|^2_{L^2}
\nonumber\\
\lesssim
&\,\|\nabla\dot\Delta_k a^{\varepsilon}\|_{L^2}
\big(\|\dot\Delta_k(u^{\varepsilon}\cdot\nabla u^{\varepsilon})\|_{L^2}+\|\dot\Delta_k(f(a^{\varepsilon})\mathcal{A} u^{\varepsilon})\|_{L^2}
\big)+\langle\mathcal{A}\dot\Delta_k u^\varepsilon,\nabla\dot\Delta_k a^\varepsilon \rangle\nonumber\\
&-\langle\nabla\dv\dot\Delta_k (a^\varepsilon u^\varepsilon),\dot\Delta_k u^\varepsilon \rangle.
\end{align}
Then, multiplying $\eqref{3.7}$ by a small enough constant $\eta_1$,  adding the resulting inequality to \eqref{3.5} and using the smallness of $\eta_1$, we deduce that
\begin{align}\label{3.5.1}
\frac{1}{2}\frac{{\rm d}}{{\rm d}t}E_{l,k}(t)
+D_{l,k}(t)
\lesssim 
&\|\dot\Delta_k(\varepsilon a^{\varepsilon}, u^{\varepsilon})\|_{L^2}
\big(\varepsilon2^k\|\Delta_k(a^{\varepsilon}u^{\varepsilon})\|_{L^2}
+\|\dot\Delta_k(u^{\varepsilon}\cdot\nabla u^{\varepsilon})\|_{L^2}\nonumber\\
&+\|\dot\Delta_k(f(a^{\varepsilon})\mathcal{A}u^{\varepsilon})\|_{L^2}
\big),
\end{align}
where
\begin{align}
&E_{l,k}(t):=\varepsilon^2\|\dot\Delta_k a^\varepsilon\|^2_{L^2}+\|\dot\Delta_k u^\varepsilon\|^2_{L^2}+\eta_1\langle\dot\Delta_k {u}^\varepsilon,\nabla\dot\Delta_k a^\varepsilon \rangle,\nonumber\\
&D_{l,k}(t):=\varepsilon^2\|\nabla\dot\Delta_k a^\varepsilon\|^2_{L^2}+\|\nabla\dot\Delta_k u^\varepsilon\|^2_{L^2}.\nonumber
\end{align}
Given that  $k\leq J^\varepsilon_0$ and $2^{J^\varepsilon_0}\sim\varepsilon$, and using the inequality \eqref{1.13.1}$_1$, it follows that
\begin{align}\label{3.9}
\eta_1\langle\dot\Delta_k u^\varepsilon,\nabla\dot\Delta_k a^\varepsilon \rangle
\leq \frac{\eta_1}{2}(\|\nabla\dot\Delta_k a^\varepsilon\|^2_{L^2}+\|\dot\Delta_k u^\varepsilon\|^2_{L^2})
\leq \frac{\eta_1}{2}(\varepsilon^2\|\dot\Delta_k a^\varepsilon\|^2_{L^2}+\|\dot\Delta_k u^\varepsilon\|^2_{L^2}),\nonumber
\end{align}
which together with the smallness of $\eta_1$, implies 
\begin{align}
E_{l,k}(t)\sim\varepsilon^2\|\dot\Delta_k a^\varepsilon\|^2_{L^2}+\|\dot\Delta_k u^\varepsilon\|^2_{L^2}.
\end{align}
Additionally, we also have 
\begin{align}\label{3.10}
D_{l,k}(t)\gtrsim 2^{2k}E_{l,k}(t).
\end{align}
Thus, combining \eqref{3.5.1} with \eqref{3.9}--\eqref{3.10}, we obtain 
\begin{align}\label{3.11}
\frac{1}{2}\frac{{\rm d}}{{\rm d}t}E_{l,k}(t)
+2^{2k}E_{l,k}(t)
\lesssim
&\sqrt{E_{l,k}(t)}
\big(\varepsilon2^k\|\Delta_k(a^{\varepsilon}u^{\varepsilon})\|_{L^2}
+\|\dot\Delta_k(u^{\varepsilon}\cdot\nabla u^{\varepsilon})\|_{L^2}+\|\dot\Delta_k(f(a^{\varepsilon})\mathcal{A}u^{\varepsilon})\|_{L^2}\big).
\end{align}
Dividing \eqref{3.11} by $\big(E_{l,k}(t) + \eps_{*}^2\big)^{\frac{1}{2}}$ with $\eps_{*} > 0$, we integrate the resulting inequality over $[0,t]$, and then pass to the limit as $\eps_{*} \to 0$ to obtain
\begin{align}\label{3.12}
&\|\dot\Delta_k (\varepsilon a^\varepsilon, u^\varepsilon)\|_{L^2} +2^{2k}\int_0^t \|\dot\Delta_k (\varepsilon a^\varepsilon, u^\varepsilon)\|_{L^2}{\rm d}\tau\nonumber\\
\lesssim&\,  \|\dot\Delta_k (\varepsilon a^\varepsilon_0, u^\varepsilon_0)\|_{L^2} +\int_0^t  \big(\varepsilon2^k\|\Delta_k(a^{\varepsilon}u^{\varepsilon})\|_{L^2}+\|\dot\Delta_k(u^\varepsilon\cdot\nabla u^\varepsilon)\|_{L^2}+\|\dot\Delta_k(f(a^\varepsilon)\mathcal{A} u^\varepsilon)\|_{L^2}\big){\rm d}\tau.
\end{align}
Multiplying \eqref{3.12} by $2^{k(\frac{d}{2}-1)}$ , taking the supremum over $[0,t]$, and summing over all 
$k \leq J^\varepsilon_0$,  we arrive at
\begin{align}\label{3.13}
&\varepsilon\|a^\varepsilon\|_{\widetilde L_t^\infty(\dot B_{2,1}^{\frac{d}{2}-1})}^\ell+
\|u^\varepsilon\|_{\widetilde L_t^\infty(\dot B_{2,1}^{\frac{d}{2}-1})}^\ell
+\varepsilon\|a^\varepsilon\|_{ L^1_t(\dot B_{2,1}^{\frac{d}{2}+1})}^\ell+
\|u^\varepsilon\|_{ L^1_t(\dot B_{2,1}^{\frac{d}{2}+1})}^\ell\nonumber\\
\lesssim& \, 
\varepsilon\|a^\varepsilon_0\|_{\dot B_{2,1}^{\frac{d}{2}-1})}^\ell
+\|u^\varepsilon_0\|_{\dot B_{2,1}^{\frac{d}{2}-1})}^\ell+
\varepsilon\|a^{\varepsilon}u^{\varepsilon}\|_{L_t^1(\dot B_{2,1}^{\frac{d}{2}})}^\ell
+\|u^{\varepsilon}\cdot\nabla u^{\varepsilon}\|_{L_t^1(\dot B_{2,1}^{\frac{d}{2}-1})}^\ell+ \|f(a^{\varepsilon})\mathcal{A}u^{\varepsilon}\|_{L_t^1(\dot B_{2,1}^{\frac{d}{2}-1})}^\ell.
\end{align}

It remains to estimate the nonlinear terms on the right-hand side of \eqref{3.13}. 
First, we focus on the term $\varepsilon\|a^{\varepsilon}u^{\varepsilon}\|_{L_t^1(\dot B_{2,1}^{\frac{d}{2}})}^\ell$. By virtue of Lemmas \ref{LA.2} and \ref{LA.3}, we have
\begin{align}\label{3.14}
\varepsilon\|a^{\varepsilon}u^{\varepsilon}\|_{L_t^1(\dot B_{2,1}^{\frac{d}{2}})}^\ell 
\lesssim& \,\|\varepsilon a^{\varepsilon}\|_{L_t^2(\dot B_{2,1}^{\frac{d}{2}})} \| u^{\varepsilon}\|_{L_t^2(\dot B_{2,1}^{\frac{d}{2}})}\nonumber\\
\lesssim&\,\bigg(\|u^{\varepsilon}\|^{\frac{1}{2}}_{L_t^1(\dot B_{2,1}^{\frac{d}{2}+1})}
\|u^{\varepsilon}\|^{\frac{1}{2}}_{\widetilde L_t^\infty(\dot B_{2,1}^{\frac{d}{2}-1})}\bigg)
\bigg(\Big(\varepsilon\|a^{\varepsilon}\|^\ell_{L_t^1(\dot B_{2,1}^{\frac{d}{2}+1})}\Big)^{\frac{1}{2}}
\Big(\varepsilon\|a^{\varepsilon}\|^\ell_{\widetilde L_t^\infty(\dot B_{2,1}^{\frac{d}{2}-1})}\Big)^{\frac{1}{2}}\nonumber\\
&\,+\Big(\varepsilon^2\|a^{\varepsilon}\|^{m}_{L_t^1(
\dot B_{2,1}^{\frac{d}{2}})}\Big)^{\frac{1}{2}}
\Big(\|a^{\varepsilon}\|^{m}_{\widetilde L_t^\infty(\dot B_{2,1}^{\frac{d}{2}})}\Big)^{\frac{1}{2}}
+\Big(\varepsilon^2\|a^{\varepsilon}\|^{h}_{L_t^1(
\dot B_{2,1}^{\frac{d}{2}})}\Big)^{\frac{1}{2}}
\Big(\|a^{\varepsilon}\|^{h}_{\widetilde L_t^\infty(\dot B_{2,1}^{\frac{d}{2}})}\Big)^{\frac{1}{2}}
\bigg)\nonumber\\
\lesssim &\,\mathcal{X}_{\varepsilon}^2(t).
\end{align}
Using \eqref{1.13.2}$_1$, \eqref{1.13.1}$_2$ and Lemmas \ref{LA.3}--\ref{LA.4}, we get
\begin{align}\label{3.15}
\|u^{\varepsilon}\cdot\nabla u^{\varepsilon}\|_{L_t^1(\dot B_{2,1}^{\frac{d}{2}-1})}^\ell\lesssim \|u^{\varepsilon}\|_{\widetilde L_t^\infty(\dot B_{2,1}^{\frac{d}{2}-1})}
\|u^{\varepsilon}\|_{ L_t^1(\dot B_{2,1}^{\frac{d}{2}+1})}
\lesssim \mathcal{X}_{\varepsilon}^2(t),
\end{align}
and
\begin{align}\label{3.16}
\|f(a^{\varepsilon})\mathcal{A} u^{\varepsilon}\|_{L_t^1(\dot B_{2,1}^{\frac{d}{2}-1})}^\ell
\lesssim& \|f(a^{\varepsilon})\|_{\widetilde L_t^\infty(\dot B_{2,1}^{\frac{d}{2}})}
\|u^{\varepsilon}\|_{L_t^1(\dot B_{2,1}^{\frac{d}{2}+1})}\nonumber\\
\lesssim& \|a^{\varepsilon}\|_{\widetilde L_t^\infty(\dot B_{2,1}^{\frac{d}{2}})}
\|u^{\varepsilon}\|_{L_t^1(\dot B_{2,1}^{\frac{d}{2}+1})}\nonumber\\
\lesssim& \bigg(\varepsilon\|a^{\varepsilon}\|^\ell_{\widetilde L_t^\infty(\dot B_{2,1}^{\frac{d}{2}-1})}
+\|a^{\varepsilon}\|^{m}_{\widetilde L_t^\infty(\dot B_{2,1}^{\frac{d}{2}})}+\|a^{\varepsilon}\|^{h}_{\widetilde L_t^\infty(\dot B_{2,1}^{\frac{d}{2}})}\bigg)
\|u^{\varepsilon}\|_{L_t^1(\dot B_{2,1}^{\frac{d}{2}+1})}
\lesssim \mathcal{X}_{\varepsilon}^2(t).
\end{align}
Thus, putting \eqref{3.14}--\eqref{3.16} into \eqref{3.13} yields \eqref{3.2.1}.
\end{proof}

\subsubsection{Medium-frequency regime.}
\begin{lem}\label{L4.3}
Let $(a^\varepsilon, {u}^\varepsilon)$ be a strong solution to the Cauchy problem \eqref{1.3}--\eqref{1.4} on $[0,T]\times \mathbb R^d$. Then, under the condition $\eqref{NJKG4.1}$, we have
 \begin{align}
&\|a^\varepsilon\|_{\widetilde L_t^\infty(\dot B_{2,1}^{\frac{d}{2}})}^m+
\|u^\varepsilon\|_{\widetilde L_t^\infty(\dot B_{2,1}^{\frac{d}{2}-1})}^m
+\varepsilon^2\bigg(\|a^\varepsilon\|_{ L^1_t(\dot B_{2,1}^{\frac{d}{2}})}^m+
\|u^\varepsilon\|_{ L^1_t(\dot B_{2,1}^{\frac{d}{2}-1})}^m\bigg)
\lesssim
\|a^\varepsilon_0\|_{\dot B_{2,1}^{\frac{d}{2}}}^m
+\|u^\varepsilon_0\|_{\dot B_{2,1}^{\frac{d}{2}-1}}^m+\mathcal{X}_{\varepsilon}^2(t).
\end{align}
\end{lem}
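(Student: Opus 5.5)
The plan is to run a localized energy method on the medium band $J_0^\varepsilon<k\le J_1^\varepsilon$, at the regularity level $\dot B^{\frac d2}_{2,1}\times\dot B^{\frac d2-1}_{2,1}$. Since $\dot B^{\frac d2}$ for $a^\varepsilon$ is one derivative above $\dot B^{\frac d2-1}$ for $u^\varepsilon$, the natural block energy is $\|\nabla\dot\Delta_k a^\varepsilon\|_{L^2}^2+\|\dot\Delta_k u^\varepsilon\|_{L^2}^2$, rather than the $\varepsilon$-weighted energy of Lemma \ref{L4.2}. Set $\nu:=2\mu+\lambda$. I would first rewrite the density equation in transport form, $\partial_t a^\varepsilon+u^\varepsilon\cdot\nabla a^\varepsilon+\dv u^\varepsilon=-a^\varepsilon\dv u^\varepsilon$, so that applying $\nabla\dot\Delta_k$ loses no derivative. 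I would then consider the functional
\begin{align*}
\mathcal L_k(t):=\alpha\|\nabla\dot\Delta_k a^\varepsilon\|_{L^2}^2+\|\dot\Delta_k u^\varepsilon\|_{L^2}^2+\beta\langle\dot\Delta_k u^\varepsilon,\nabla\dot\Delta_k a^\varepsilon\rangle ,
\end{align*}
with $\alpha>0$ small and $\beta$ to be fixed.

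\textbf{Linear computation.} The term $\alpha\frac{{\rm d}}{{\rm d}t}\|\nabla\dot\Delta_k a^\varepsilon\|_{L^2}^2$ produces the bad coupling $-2\alpha\langle\nabla\dv\dot\Delta_k u^\varepsilon,\nabla\dot\Delta_k a^\varepsilon\rangle$. This coupling is of order $2^{3k}$ and cannot be absorbed by the viscosity alone. The cross term, through $\mathcal A=\mu\Delta+(\mu+\lambda)\nabla\dv$, produces
\begin{align*}
\beta\langle\mathcal A\dot\Delta_k u^\varepsilon,\nabla\dot\Delta_k a^\varepsilon\rangle=\beta\nu\langle\nabla\dv\dot\Delta_k u^\varepsilon,\nabla\dot\Delta_k a^\varepsilon\rangle ,
\end{align*}
where I used $\Delta=\nabla\dv-\operatorname{curl}^*\operatorname{curl}$ and the fact that gradients are curl-free. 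Choosing $\beta=2\alpha/\nu$ cancels the bad term exactly; this is the effective-velocity mechanism, written at the level of energies. The remaining contributions of the cross term are:
\begin{itemize}
\item $-\beta\varepsilon^2\|\nabla\dot\Delta_k a^\varepsilon\|_{L^2}^2$, which is the desired damping of the density at rate $\varepsilon^2$;
\item $+\beta\|\dv\dot\Delta_k u^\varepsilon\|_{L^2}^2$, which is absorbed by $\mu\|\nabla\dot\Delta_k u^\varepsilon\|_{L^2}^2$ once $\alpha$ is small.
\end{itemize}
The pressure coupling $\varepsilon^2\langle\nabla\dot\Delta_k a^\varepsilon,\dot\Delta_k u^\varepsilon\rangle$ from the $u$-equation is handled by Young's inequality:
\begin{align*}
\varepsilon^2\|\nabla\dot\Delta_k a^\varepsilon\|_{L^2}\|\dot\Delta_k u^\varepsilon\|_{L^2}\le \tfrac{\beta}{4}\varepsilon^2\|\nabla\dot\Delta_k a^\varepsilon\|_{L^2}^2+\tfrac{C}{\beta}\varepsilon^2\|\dot\Delta_k u^\varepsilon\|_{L^2}^2 .
\end{align*}
Here the medium-frequency restriction enters: $2^{2k}\gtrsim\varepsilon^2$ on this band, so $\varepsilon^2\|\dot\Delta_k u^\varepsilon\|_{L^2}^2\lesssim 2^{2k}\|\dot\Delta_k u^\varepsilon\|_{L^2}^2$. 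This is absorbed by viscosity, provided the dyadic threshold constant in $J_0^\varepsilon$ is adjusted relative to $\beta$.

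I expect this constant bookkeeping to be the main obstacle. The choices $\beta\sim\alpha$, small $\alpha$, and absorption of $C\varepsilon^2/\beta$ by $\mu 2^{2k}$ must be made compatible. If they do not close directly, I would shift $J_0^\varepsilon$ by a fixed number of dyadic blocks, since Lemma \ref{L4.2} is insensitive to such a shift. Equivalence $\mathcal L_k\sim\|\nabla\dot\Delta_k a^\varepsilon\|_{L^2}^2+\|\dot\Delta_k u^\varepsilon\|_{L^2}^2$ holds because $\beta^2\ll\alpha$ when $\alpha$ is small. Using again $2^{2k}\gtrsim\varepsilon^2$, the total dissipation dominates $\varepsilon^2\mathcal L_k$. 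Hence
\begin{align*}
\tfrac12\tfrac{{\rm d}}{{\rm d}t}\mathcal L_k+c\,\varepsilon^2\mathcal L_k\lesssim\sqrt{\mathcal L_k}\,\big(\text{nonlinear block terms}\big).
\end{align*}

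\textbf{Closing and nonlinear terms.} Next I would divide by $\sqrt{\mathcal L_k+\varepsilon_*^2}$ and integrate in time, as in \eqref{3.12}. Then I multiply by $2^{k(\frac d2-1)}$ and sum over $J_0^\varepsilon<k\le J_1^\varepsilon$. This yields the left-hand side of the lemma, with dissipation $\varepsilon^2\|a^\varepsilon\|^m_{L^1_t(\dot B^{\frac d2}_{2,1})}+\varepsilon^2\|u^\varepsilon\|^m_{L^1_t(\dot B^{\frac d2-1}_{2,1})}$. The nonlinear terms are treated as follows:
\begin{itemize}
\item The transport term $\nabla\dot\Delta_k(u^\varepsilon\cdot\nabla a^\varepsilon)$ is written as $u^\varepsilon\cdot\nabla\nabla\dot\Delta_k a^\varepsilon$ plus commutators. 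The first part is integrated by parts to give $\frac12\int\dv u^\varepsilon|\nabla\dot\Delta_k a^\varepsilon|^2$. The commutators are bounded via Lemma \ref{LA.5}, giving $\|u^\varepsilon\|_{L^1_t(\dot B^{\frac d2+1}_{2,1})}\|a^\varepsilon\|_{\widetilde L^\infty_t(\dot B^{\frac d2}_{2,1})}$. The cross term in $\mathcal L_k$ does not need this care, since there $u^\varepsilon\cdot\nabla a^\varepsilon$ appears only at $\dot B^{\frac d2-1}$ level.
\item $a^\varepsilon\dv u^\varepsilon$ in $\dot B^{\frac d2}_{2,1}$, and $u^\varepsilon\cdot\nabla u^\varepsilon$ and $f(a^\varepsilon)\mathcal A u^\varepsilon$ in $\dot B^{\frac d2-1}_{2,1}$, are bounded by Lemmas \ref{LA.3}--\ref{LA.4}, exactly as in \eqref{3.15}--\eqref{3.16}.
\item Throughout, the full norm of $a^\varepsilon$ is split as $\|a^\varepsilon\|_{\dot B^{\frac d2}_{2,1}}\lesssim\varepsilon\|a^\varepsilon\|^\ell_{\dot B^{\frac d2-1}_{2,1}}+\|a^\varepsilon\|^{m+h}_{\dot B^{\frac d2}_{2,1}}$, using \eqref{1.13.1}.
\end{itemize}
All these contributions are then $\lesssim\mathcal X_\varepsilon^2(t)$, which gives the stated inequality.
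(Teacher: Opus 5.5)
Your linear mechanism is the paper's. With $\beta=\eta_2$ and $\alpha=\eta_2(2\mu+\lambda)/2$, your $\mathcal L_k$ is the paper's $E_{m,k}$ without the term $\varepsilon^2\|\dot\Delta_k a^\varepsilon\|_{L^2}^2$, and cancelling $\langle\nabla\dv\dot\Delta_k u^\varepsilon,\nabla\dot\Delta_k a^\varepsilon\rangle$ is exactly how \eqref{4.24} is obtained.

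\textbf{The gap: the cross-term transport term.} You claim the cross term needs no transport structure, and this is where the argument breaks. Its contribution $-\beta\langle\dot\Delta_k u^\varepsilon,\nabla\dot\Delta_k(u^\varepsilon\cdot\nabla a^\varepsilon)\rangle$ puts as many derivatives on $a^\varepsilon$ as the term you do treat carefully in $\alpha\|\nabla\dot\Delta_k a^\varepsilon\|_{L^2}^2$. In your $\sqrt{\mathcal L_k}$ scheme, charging $\|\dot\Delta_k u^\varepsilon\|_{L^2}$ to $\sqrt{\mathcal L_k}$ leaves $\|u^\varepsilon\cdot\nabla a^\varepsilon\|^m_{L^1_t(\dot B^{\frac d2}_{2,1})}$. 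Its paraproduct part $T_{u^\varepsilon}\nabla a^\varepsilon$ needs $a^\varepsilon$ one derivative above $\dot B^{\frac d2}_{2,1}$. In the medium band that derivative costs only $\varepsilon^{1-\delta_c}$ by \eqref{1.13.2}, but two obstructions remain:
\begin{itemize}
\item $u^\varepsilon$ must then be taken in $L^2_t(L^\infty)$, because $L^1_t(L^\infty)$ is not controlled by $\mathcal X_\varepsilon$ globally in time;
\item the only bound on medium-frequency density is $\varepsilon\|a^\varepsilon\|^m_{L^2_t(\dot B^{\frac d2}_{2,1})}\lesssim\mathcal X_\varepsilon(t)$.
\end{itemize}
You end up with $\varepsilon^{-\delta_c}\mathcal X_\varepsilon^2(t)$, which is not uniform in $\varepsilon$. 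Moving the index down to $\dot B^{\frac d2-1}_{2,1}$ does not help either, since $u^\varepsilon\cdot\nabla a^\varepsilon\in L^1_t(\dot B^{\frac d2-1}_{2,1})$ would again require $u^\varepsilon\in L^1_t(\dot B^{\frac d2}_{2,1})$.

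\textbf{The fix.} The paper's remedy in \eqref{4.24} is to treat this term together with $-\beta\langle\dot\Delta_k(u^\varepsilon\cdot\nabla u^\varepsilon),\nabla\dot\Delta_k a^\varepsilon\rangle$. Up to the commutators $[u^\varepsilon\cdot\nabla,\partial_j\dot\Delta_k]a^\varepsilon$ and $[u^\varepsilon\cdot\nabla,\dot\Delta_k]u^\varepsilon$, which Lemma \ref{LA.5} handles, the two leading parts combine into $-\beta\int u^\varepsilon\cdot\nabla(\dot\Delta_k u^\varepsilon\cdot\nabla\dot\Delta_k a^\varepsilon)=\beta\int\dv u^\varepsilon\,\dot\Delta_k u^\varepsilon\cdot\nabla\dot\Delta_k a^\varepsilon$. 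Its modulus is $\lesssim\|\dv u^\varepsilon\|_{L^\infty}\mathcal L_k$.

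An equivalent route: integrate by parts onto $\dv\dot\Delta_k u^\varepsilon$ and commute $\dot\Delta_k$ past $u^\varepsilon\cdot\nabla$. Then charge $\|\nabla\dot\Delta_k a^\varepsilon\|_{L^2}$, not $\|\dot\Delta_k u^\varepsilon\|_{L^2}$, to $\sqrt{\mathcal L_k}$. The remaining forcing $\|u^\varepsilon\|_{L^\infty}\|\dv\dot\Delta_k u^\varepsilon\|_{L^2}$ sums to $\|u^\varepsilon\|^2_{L^2_t(\dot B^{\frac d2}_{2,1})}\lesssim\mathcal X_\varepsilon^2(t)$.

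\textbf{A smaller point: the threshold shift.} The shift is unnecessary, and as you state it, it proves the estimate on a smaller band than the lemma's. Add $\varepsilon^2\|\dot\Delta_k a^\varepsilon\|^2_{L^2}$ to $\mathcal L_k$, as the paper's $E_{m,k}$ does. The pressure coupling then cancels exactly against $\varepsilon^2\langle\dv\dot\Delta_k u^\varepsilon,\dot\Delta_k a^\varepsilon\rangle$, so no Young's inequality is needed. The equivalence $\mathcal L_k\sim\|\nabla\dot\Delta_k a^\varepsilon\|^2_{L^2}+\|\dot\Delta_k u^\varepsilon\|^2_{L^2}$ survives because $2^k>\varepsilon$ on the medium band.
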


\begin{proof}
We first apply $\nabla$ to the equation $\eqref{3.2}_1$, and then multiply this resulting identity as well as the equation $\eqref{3.2}_2$ by $\nabla\Delta_k a^\varepsilon$. 
Consequently, we get
\begin{align}\label{3.19}
\frac{1}{2}\frac{{\rm d}}{{\rm d}t}\|\nabla\Delta_k a^\varepsilon\|_{L^2}^2
+\langle \nabla\dv\dot\Delta_k u^\varepsilon, \nabla\dot\Delta_k a^\varepsilon \rangle
=-
\langle \nabla\dv\dot\Delta_k (a^\varepsilon u^\varepsilon ), \nabla\dot\Delta_k a^\varepsilon \rangle.
\end{align}
and 
\begin{align}\label{3.20}
&\frac{{\rm d}}{{\rm d}t}\langle\dot\Delta_k u^\varepsilon, \nabla\dot\Delta_k a^\varepsilon \rangle
-\langle\mathcal{A}\dot\Delta_k u^\varepsilon, \nabla\dot\Delta_k a^\varepsilon \rangle
+\varepsilon^2\|\nabla\dot\Delta_k a^{\varepsilon}\|^2_{L^2}
\nonumber\\
\lesssim&  \|\dv\dot\Delta_k u^{\varepsilon}\|^2_{L^2}
-\langle\nabla\dv\dot\Delta_k (a^\varepsilon u^\varepsilon), \dot\Delta_k u^\varepsilon \rangle
-\langle\dot\Delta_k (u^\varepsilon\cdot\nabla u^\varepsilon), \nabla\dot\Delta_k a^\varepsilon \rangle\nonumber\\
&+\|\nabla\dot\Delta_k a^{\varepsilon}\|_{L^2}
\|\dot\Delta_k(f(a^{\varepsilon})\mathcal{A}u^{\varepsilon})\|_{L^2}.
\end{align}
Thus, multiplying \eqref{3.19} by $(2\mu+\lambda)$, then adding the resulting equality to \eqref{3.20} and employing integration by parts, we arrive at 
\begin{align}\label{4.24}
&\frac{{\rm d}}{{\rm d}t}\bigg(\frac{(2\mu+\lambda)}{2}\|\nabla\Delta_k a^\varepsilon\|_{L^2}^2
+\langle\dot\Delta_k u^\varepsilon, \nabla\dot\Delta_k a^\varepsilon \rangle\bigg)+\varepsilon^2\|\nabla\dot\Delta_k a^\varepsilon\|^2
\nonumber\\
\lesssim& \, \|\dv\dot\Delta_k u^{\varepsilon}\|^2_{L^2}
+\|\dot\Delta_k(u^\varepsilon,\nabla a^\varepsilon)\|_{L^2}
\Big(\|\nabla \dot\Delta_k(a^\varepsilon \dv u^\varepsilon)\|_{L^2}+\sum^d_{j=1}\|[u^\varepsilon\cdot\nabla,\partial_j\dot\Delta_k]a^\varepsilon\|_{L^2}\nonumber\\
&+\|\dv u^\varepsilon\|_{L^\infty}
\|\dot\Delta_k(u^\varepsilon,\nabla a^\varepsilon)\|_{L^2}
+\|[u^\varepsilon\cdot\nabla,\dot\Delta_k]u^\varepsilon\|_{L^2}
+\|\dot\Delta_k(f(a^{\varepsilon})\mathcal{A}u^{\varepsilon})\|_{L^2}\Big).
\end{align}
Arguing analogously to \eqref{3.5}, we directly obtain 
\begin{align}\label{4.25}
&\frac{1}{2}\frac{{\rm d}}{{\rm d}t}(\|\dot\Delta_k u^{\varepsilon}\|_{L^2}^2
+\varepsilon^2\|\dot\Delta_k a^\varepsilon\|_{L^2}^2)
+\mu \|\nabla \dot\Delta_k u^{\varepsilon}\|_{L^2}^2
+(\mu+\lambda)\|\dv\dot\Delta_k u^{\varepsilon}\|_{L^2}^2
\nonumber\\
\lesssim& \, \|\dot\Delta_k(\varepsilon a^{\varepsilon}, u^{\varepsilon})\|_{L^2}
\Big(\varepsilon\|[u^\varepsilon\cdot\nabla,\dot\Delta_k]a^\varepsilon\|_{L^2}+
\varepsilon\|\dv u^\varepsilon\|_{L^\infty}\|\dot\Delta_k a^\varepsilon\|_{L^2}
+\varepsilon\|\dot\Delta_k(a^\varepsilon\dv u^\varepsilon)\|_{L^2}\nonumber\\
&+\|[u^\varepsilon\cdot\nabla,\dot\Delta_k]u^\varepsilon\|_{L^2}
+\|\dv u^\varepsilon\|_{L^\infty}\|\dot\Delta_ku^\varepsilon\|_{L^2}
+\|\dot\Delta_k(f(a^{\varepsilon})\mathcal{A} u^{\varepsilon})\|_{L^2}\Big).
\end{align}
Furthermore, multiplying $\eqref{4.24}$ by a small enough constant $\eta_2$,  adding the resulting inequality to \eqref{4.25} and using the smallness of $\eta_2$, we arrive at
\begin{align}\label{4.26}
&\frac{{\rm d}}{{\rm d}t}E_{m,k}(t)
+D_{m,k}(t)\nonumber\\
\lesssim& \|\dot\Delta_k(\nabla a^{\varepsilon}, u^{\varepsilon})\|_{L^2}\Big(\varepsilon\|[u^\varepsilon\cdot\nabla,\dot\Delta_k]a^\varepsilon\|_{L^2}+
\|\dv u^\varepsilon\|_{L^\infty}\|\dot\Delta_k (\nabla a^\varepsilon,u^\varepsilon)\|_{L^2}
+\|\dot\Delta_k\nabla(a^\varepsilon\dv u^\varepsilon)\|_{L^2}
\nonumber\\
&+\sum^d_{j=1}\|[u^\varepsilon\cdot\nabla,\partial_j\dot\Delta_k]a^\varepsilon\|_{L^2}
+\|[u^\varepsilon\cdot\nabla,\dot\Delta_k]
u^\varepsilon\|_{L^2}
+\|\dot\Delta_k(f(a^{\varepsilon})\mathcal{A} u^{\varepsilon})\|_{L^2}
\Big),
\end{align}
where
\begin{align}
&E_{m,k}(t):=\varepsilon^2\|\dot\Delta_k a^\varepsilon\|^2_{L^2}+\|\dot\Delta_k u^\varepsilon\|^2_{L^2}+\eta_2\Big(
\frac{2\mu+\lambda}{2}\|\nabla\dot\Delta_ka^\varepsilon\|^2_{L^2}+\langle\dot\Delta_k u^\varepsilon,\nabla\dot\Delta_k a^\varepsilon \rangle\Big),\nonumber\\
&D_{m,k}(t):=\varepsilon^2\|\nabla\dot\Delta_k a^\varepsilon\|^2_{L^2}+\mu\|\nabla\dot\Delta_k u^\varepsilon\|^2_{L^2}+(\mu+\lambda)\|\dv\dot\Delta_k u^\varepsilon\|^2_{L^2}.\nonumber
\end{align}
It is easy to check that 
\begin{align}
\big|\langle\dot\Delta_k u^\varepsilon,\nabla\dot\Delta_k a^\varepsilon \rangle\big|\leq \frac{2\mu+\lambda}{4}\|\dot\Delta_k \nabla a^\varepsilon\|^2_{L^2}+\frac{\|\dot\Delta_k u^\varepsilon\|^2_{L^2}}{2\mu+\lambda},
\end{align}
which together with the smallness of $\eta_2$, yields 
\begin{align}\label{4.28}
E_{m,k}(t)\sim\|\dot\Delta_k \nabla a^\varepsilon\|^2_{L^2}+\|\dot\Delta_k u^\varepsilon\|^2_{L^2}.
\end{align}
In addition, based on the fact of $\varepsilon<2^k\leq \varepsilon^{1-\delta_c}$, we get
\begin{align}\label{4.29}
D_{m,k}(t)\geq\varepsilon^2\big(\|\nabla\dot\Delta_k a^\varepsilon\|^2_{L^2}+\|\dot\Delta_k u^\varepsilon\|^2_{L^2}\big).
\end{align}
Therefore, integrating \eqref{4.26} over $[0,t]$, and then  using \eqref{4.28} and \eqref{4.29}, we deduce that 
\begin{align}\label{4.30}
&\|\dot\Delta_k (\nabla a^\varepsilon, u^\varepsilon)\|_{L^2} +\varepsilon^2\|\dot\Delta_k (\nabla a^\varepsilon, u^\varepsilon)\|_{L^1_t(L^2)}\nonumber\\
\lesssim&\,  \|\dot\Delta_k (\nabla a^\varepsilon_0, u^\varepsilon_0)\|_{L^2} +\varepsilon\|[u^\varepsilon\cdot\nabla,\dot\Delta_k]a^\varepsilon\|_{L^1_t(L^2)}+
\|\dv u^\varepsilon\|_{L^1_t(L^\infty)}\|\dot\Delta_k (\nabla a^\varepsilon,u^\varepsilon)\|_{L^\infty_t(L^2)}\nonumber\\
&+2^k\|\dot\Delta_k(a^\varepsilon\dv u^\varepsilon)\|_{L^1_t(L^2)}
+\sum^d_{j=1}\|[u^\varepsilon\cdot\nabla,\partial_j\dot\Delta_k]a^\varepsilon\|_{L^1_t(L^2)}
+\|[u^\varepsilon\cdot\nabla,\dot\Delta_k]
u^\varepsilon\|_{L^1_t(L^2)}\nonumber\\
&+\|\dot\Delta_k(f(a^{\varepsilon})\mathcal{A}u^{\varepsilon})\|_{L^1_t(L^2)}.
\end{align}
Multiplying \eqref{4.30} by $2^{k(\frac{d}{2}-1)}$, summing over all $J_0^\varepsilon< k\leq J^\varepsilon_1$ and using Lemma \ref{A.2},  we get 
\begin{align}\label{4.31}
&\|a^\varepsilon\|_{\widetilde L_t^\infty(\dot B_{2,1}^{\frac{d}{2}})}^{m}+
\|u^\varepsilon\|_{\widetilde L_t^\infty(\dot B_{2,1}^{\frac{d}{2}-1})}^m
+\varepsilon^2\big(\|a^\varepsilon\|_{ L^1_t(\dot B_{2,1}^{\frac{d}{2}})}^m+
\|u^\varepsilon\|_{ L^1_t(\dot B_{2,1}^{\frac{d}{2}-1})}^m\big)\nonumber\\
\lesssim& \, \|a^\varepsilon_0\|_{\dot B_{2,1}^{\frac{d}{2}}}^m
+\|u^\varepsilon_0\|_{\dot B_{2,1}^{\frac{d}{2}-1}}^m+
\|f(a^{\varepsilon})\mathcal{A}u^{\varepsilon}\|^m_{L^1_t(\dot B_{2,1}^{\frac{d}{2}-1})}+\|a^\varepsilon\dv u^\varepsilon\|^m_{L^1_t(\dot B_{2,1}^{\frac{d}{2}})}\nonumber\\
&+\|u^\varepsilon\|_{L^1_t(\dot B_{2,1}^{\frac{d}{2}+1})} (\|\nabla a^\varepsilon\|^m_{\widetilde L^\infty_t(\dot B_{2,1}^{\frac{d}{2}-1})}+\|u^\varepsilon\|^m_{\widetilde L^\infty_t(\dot B_{2,1}^{\frac{d}{2}-1})})+\varepsilon\sum_{J^\varepsilon_0< k\leq J^\varepsilon_1}2^{k(\frac{d}{2}-1)}\|[u^\varepsilon\cdot\nabla,\dot\Delta_k]a^\varepsilon\|_{L^1_t(L^2)}\nonumber\\
&
+\sum^d_{j=1}\sum_{J^\varepsilon_0< k\leq J^\varepsilon_1}2^{k(\frac{d}{2}-1)}\|[u^\varepsilon\cdot\nabla,\partial_j\dot\Delta_k]a^\varepsilon\|_{L^1_t(L^2)}
+\sum_{J^\varepsilon_0< k\leq J^\varepsilon_1}2^{k(\frac{d}{2}-1)}\|[u^\varepsilon\cdot\nabla,\dot\Delta_k]
u^\varepsilon\|_{L^1_t(L^2)}.
\end{align}

Next, we focus on the estimates of the nonlinear terms on the right-hand side of \eqref{4.31}. For the first nonlinear term, by virtue of Lemmas \ref{A.3} and \ref{A.4}, it holds that
\begin{align}\label{4.32}
\|f(a^{\varepsilon})\mathcal{A} u^{\varepsilon}\|^m_{L^1_t(\dot B_{2,1}^{\frac{d}{2}-1})}\lesssim\|f(a^\varepsilon)\|_{L^\infty_t(\dot B_{2,1}^{\frac{d}{2}})}\|u^\varepsilon\|_{L^1_t(\dot B_{2,1}^{\frac{d}{2}+1})}\lesssim\|a^\varepsilon\|_{L^\infty_t(\dot B_{2,1}^{\frac{d}{2}})}\|u^\varepsilon\|_{L^1_t(\dot B_{2,1}^{\frac{d}{2}+1})}\lesssim \mathcal{X}^2_\varepsilon(t)
\end{align}
and 
\begin{align}\label{4.33}
\|a^\varepsilon\dv u^\varepsilon\|^m_{L^1_t(\dot B_{2,1}^{\frac{d}{2}})}\lesssim\|a^\varepsilon\|_{L^\infty_t(\dot B_{2,1}^{\frac{d}{2}})}\|u^\varepsilon\|_{L^1_t(\dot B_{2,1}^{\frac{d}{2}+1})}\lesssim \mathcal{X}^2_\varepsilon(t).
\end{align}
It remains to control the terms containing commutators.
With the help of Lemma \ref{LA.5}, \eqref{1.13.2}$_1$ and \eqref{1.13.1}$_2$, we obtain 
\begin{align}\label{4.34}
&\varepsilon\sum_{J^\varepsilon_0< k\leq J^\varepsilon_1}2^{k(\frac{d}{2}-1)}\|[u^\varepsilon\cdot\nabla,\dot\Delta_k]a^\varepsilon\|_{L^1_t(L^2)}\nonumber\\
\lesssim& \,\varepsilon\|a^\varepsilon\|_{\widetilde L^\infty_t(\dot B_{2,1}^{\frac{d}{2}-1})}\|u^\varepsilon\|_{L^1_t(\dot B_{2,1}^{\frac{d}{2}+1})}\nonumber\\
\lesssim& \,\big(\varepsilon\|a^\varepsilon\|^\ell_{\widetilde L^\infty_t(\dot B_{2,1}^{\frac{d}{2}-1})}+\|a^\varepsilon\|^m_{\widetilde L^\infty_t(\dot B_{2,1}^{\frac{d}{2}})}+\|a^\varepsilon\|^h_{\widetilde L^\infty_t(\dot B_{2,1}^{\frac{d}{2}})})\|u^\varepsilon\|_{L^1_t(\dot B_{2,1}^{\frac{d}{2}+1})}
\lesssim \mathcal{X}^2_\varepsilon(t)
\end{align}
and 
\begin{align}\label{4.35}
\sum_{J^\varepsilon_0< k\leq J^\varepsilon_1}2^{k(\frac{d}{2}-1)}\|[u^\varepsilon\cdot\nabla,\dot\Delta_k]
u^\varepsilon\|_{L^1_t(L^2)}
\lesssim \|u^\varepsilon\|_{\widetilde L^\infty_t(\dot B_{2,1}^{\frac{d}{2}-1})}\|u^\varepsilon\|_{L^1_t(\dot B_{2,1}^{\frac{d}{2}+1})}\lesssim \mathcal{X}^2_\varepsilon(t).
\end{align}
Simple calculation yields 
\begin{align}
[u^\varepsilon\cdot\nabla,\partial_j\dot\Delta_k]a^\varepsilon
=&\,u^\varepsilon\cdot\nabla(\dot\Delta_k\partial_ja^\varepsilon)-\dot\Delta_k(\partial_ju^\varepsilon\cdot\nabla a^\varepsilon)-\dot\Delta_k(u^\varepsilon\cdot\nabla \partial_ja^\varepsilon)\nonumber\\
=&\,[u^\varepsilon\cdot\nabla, \dot\Delta_k]\partial_ja^\varepsilon-\dot\Delta_k(\partial_ju^\varepsilon\cdot\nabla a^\varepsilon).\nonumber
\end{align}
Thus, by virtue of Lemmas \ref{A.3} and \ref{LA.5}, we have
\begin{align}\label{4.37}
\sum^d_{j=1}\sum_{J^\varepsilon_0<k\leq J^\varepsilon_1}2^{k(\frac{d}{2}-1)}\|[u^\varepsilon\cdot\nabla,\partial_j\dot\Delta_k]a^\varepsilon\|_{L^1_t(L^2)}
\lesssim \|\nabla a^\varepsilon\|_{\widetilde L^\infty_t(\dot B_{2,1}^{\frac{d}{2}-1})}
\|u^\varepsilon\|_{L^1_t(\dot B_{2,1}^{\frac{d}{2}+1})}\lesssim \mathcal{X}^2_\varepsilon(t).
\end{align}

Finally, putting \eqref{4.32}--\eqref{4.37} into \eqref{4.31}, we arrive at 
\begin{align}
&\|a^\varepsilon\|_{\widetilde L_t^\infty(\dot B_{2,1}^{\frac{d}{2}})}^{m}+
\|u^\varepsilon\|_{\widetilde L_t^\infty(\dot B_{2,1}^{\frac{d}{2}-1})}^m
+\varepsilon^2\bigg(\|a^\varepsilon\|_{ L^1_t(\dot B_{2,1}^{\frac{d}{2}})}^m+
\|u^\varepsilon\|_{ L^1_t(\dot B_{2,1}^{\frac{d}{2}-1})}^m\bigg)
\lesssim \|a^\varepsilon_0\|_{\dot B_{2,1}^{\frac{d}{2}}}^m
+\|u^\varepsilon_0\|_{\dot B_{2,1}^{\frac{d}{2}-1}}^m+\mathcal{X}_\varepsilon^2(t).\nonumber
\end{align}
This completes the proof of this lemma.
\end{proof}

Since the dissipative structure of  $u^\varepsilon$ is preserved,  the velocity field should admit an estimate for $\|u^\varepsilon\|^m_{ L^1_t(\dot B_{2,1}^{\frac{d}{2}+1})}$. Motivated by this observation, we now turn our attention to deriving such an estimate.

\begin{lem}\label{L4.4}
Let $(a^\varepsilon, u^\varepsilon)$ be a strong solution to the Cauchy problem \eqref{1.3}--\eqref{1.4} on $[0,T]\times \mathbb R^d$. Then, under the condition $\eqref{NJKG4.1}$, we have
 \begin{align}
\|u^\varepsilon\|_{\widetilde L_t^\infty(\dot B_{2,1}^{\frac{d}{2}-1})}^m
+\|u^\varepsilon\|_{ L^1_t(\dot B_{2,1}^{\frac{d}{2}+1})}^m
\lesssim \|u^\varepsilon_0\|_{\dot B_{2,1}^{\frac{d}{2}-1}}^m+\varepsilon^2\|a^\varepsilon\|_{L^1_t(\dot B_{2,1}^{\frac{d}{2}})}^m+\mathcal{X}_{\varepsilon}^2(t).\nonumber
\end{align}
\end{lem}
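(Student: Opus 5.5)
Our plan is to view the velocity equation \eqref{1.3}$_2$ as a Lamé system \eqref{G2.8} with forcing
\[
g^\varepsilon:=-\varepsilon^2\nabla a^\varepsilon-u^\varepsilon\cdot\nabla u^\varepsilon-\mu f(a^\varepsilon)\Delta u^\varepsilon-(\mu+\lambda)f(a^\varepsilon)\nabla\dv u^\varepsilon,
\]
and to apply the maximal regularity estimate of Lemma \ref{Lemma2.6} frequency by frequency. Since the Lamé semigroup commutes with $\dot\Delta_k$, applying $\dot\Delta_k$ gives, for each $k$,
\[
\|\dot\Delta_k u^\varepsilon\|_{L^\infty_t(L^2)}+2^{2k}\|\dot\Delta_k u^\varepsilon\|_{L^1_t(L^2)}\lesssim\|\dot\Delta_k u^\varepsilon_0\|_{L^2}+\|\dot\Delta_k g^\varepsilon\|_{L^1_t(L^2)},
\]
with a constant independent of $k$ and $\varepsilon$. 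This is Lemma \ref{Lemma2.6} with $\varsigma_1\in\{1,\infty\}$ and $\varsigma_2=1$, or equivalently a direct energy estimate of $\dot\Delta_k u^\varepsilon$ in $L^2$ using $\mu>0$ and $2\mu+\lambda>0$. We then multiply by $2^{k(\frac d2-1)}$ and sum over $J^\varepsilon_0<k\le J^\varepsilon_1$. This gives the left-hand side of the lemma, bounded by $\|u_0^\varepsilon\|^m_{\dot B^{\frac d2-1}_{2,1}}+\|g^\varepsilon\|^m_{L^1_t(\dot B^{\frac d2-1}_{2,1})}$.

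The key point is that the pressure term is not treated as a perturbation to be absorbed. It is simply bounded by
\[
\varepsilon^2\|\nabla a^\varepsilon\|^m_{L^1_t(\dot B^{\frac d2-1}_{2,1})}\lesssim\varepsilon^2\|a^\varepsilon\|^m_{L^1_t(\dot B^{\frac d2}_{2,1})},
\]
which is exactly the second term on the right-hand side of the statement. That quantity is already controlled by Lemma \ref{L4.3}, so this step is routine. The medium-frequency localization means no smallness of $\varepsilon$ is needed here.

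For the nonlinear terms we reuse the estimates from Lemmas \ref{L4.2} and \ref{L4.3}.
\begin{itemize}
\item By Lemma \ref{LA.3} with $s_1=\frac d2-1$ and $s_2=\frac d2$,
\[
\|u^\varepsilon\cdot\nabla u^\varepsilon\|^m_{L^1_t(\dot B^{\frac d2-1}_{2,1})}\lesssim\|u^\varepsilon\|_{\widetilde L^\infty_t(\dot B^{\frac d2-1}_{2,1})}\|u^\varepsilon\|_{L^1_t(\dot B^{\frac d2+1}_{2,1})}.
\]
\item As in \eqref{3.16} and \eqref{4.32}, using Lemma \ref{LA.4} (valid under \eqref{NJKG4.1}),
\[
\|f(a^\varepsilon)\mathcal Au^\varepsilon\|^m_{L^1_t(\dot B^{\frac d2-1}_{2,1})}\lesssim\|a^\varepsilon\|_{\widetilde L^\infty_t(\dot B^{\frac d2}_{2,1})}\|u^\varepsilon\|_{L^1_t(\dot B^{\frac d2+1}_{2,1})}.
\]
\end{itemize}
Both norms must then be expressed through $\mathcal X_\varepsilon(t)$. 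For the velocity this is immediate from the definition \eqref{NJK1.15}. For the density we split $a^\varepsilon$ into low, medium and high frequencies, and use the low-frequency Bernstein inequality \eqref{1.13.1}$_1$, $\|a^\varepsilon\|^\ell_{\dot B^{\frac d2}_{2,1}}\lesssim\varepsilon\|a^\varepsilon\|^\ell_{\dot B^{\frac d2-1}_{2,1}}$. This gives
\[
\|a^\varepsilon\|_{\widetilde L^\infty_t(\dot B^{\frac d2}_{2,1})}\lesssim\mathcal X_\varepsilon(t),
\]
so both nonlinear contributions are $\lesssim\mathcal X_\varepsilon^2(t)$.

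The only delicate point is the product estimate. It must be applied to the full functions $u^\varepsilon$ and $a^\varepsilon$ and only afterwards restricted to medium frequencies, so that no $\varepsilon$-dependent constant appears. That restriction is legitimate because $\|\cdot\|^m\le\|\cdot\|$. Collecting the bounds yields the claimed estimate.
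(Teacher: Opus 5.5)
Your proposal is correct and follows the paper's proof essentially verbatim. The paper likewise applies the maximal regularity estimate of Lemma \ref{Lemma2.6} on the medium frequencies, keeps the pressure contribution as $\varepsilon^2\|a^\varepsilon\|^m_{L^1_t(\dot B^{\frac d2}_{2,1})}$, and bounds $u^\varepsilon\cdot\nabla u^\varepsilon$ and $f(a^\varepsilon)\mathcal A u^\varepsilon$ by product estimates on the full functions, exactly as in \eqref{4.39} and \eqref{4.32}. Your extra details only make explicit what the paper leaves implicit: the frequency-by-frequency form of the Lam\'e estimate using $\min\{\mu,2\mu+\lambda\}>0$, and the low-frequency Bernstein step that controls $\|a^\varepsilon\|_{\widetilde L^\infty_t(\dot B^{\frac d2}_{2,1})}$ by $\mathcal X_\varepsilon(t)$.
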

\begin{proof}
Based on the optimal regularity estimate in Lemma \ref{Lemma2.6}, we directly get
\begin{align}\label{4.38}
&\|u^\varepsilon\|_{\widetilde L_t^\infty(\dot B_{2,1}^{\frac{d}{2}-1})}^m
+
\|u^\varepsilon\|_{ L^1_t(\dot B_{2,1}^{\frac{d}{2}+1})}^m\nonumber\\
\lesssim&\|u^\varepsilon_0\|_{\dot B_{2,1}^{\frac{d}{2}-1}}^m+
\varepsilon^2\|a^\varepsilon\|_{L^1_t(\dot B_{2,1}^{\frac{d}{2}})}^m
+\|u^\varepsilon\cdot\nabla u^\varepsilon\|_{L^1_t(\dot B_{2,1}^{\frac{d}{2}-1})}^m
+\|f(a^{\varepsilon})\mathcal{A} u^{\varepsilon}\|^m_{L^1_t(\dot B_{2,1}^{\frac{d}{2}-1})}.
\end{align}
Using Lemma \ref{A.3}, we easily obtain 
\begin{align}\label{4.39}
\|u^\varepsilon\cdot\nabla u^\varepsilon\|^m_{L^1_t(\dot B_{2,1}^{\frac{d}{2}-1})}\lesssim\|u^\varepsilon\|_{\widetilde L^\infty_t(\dot B_{2,1}^{\frac{d}{2}-1})}\|u^\varepsilon\|_{L^1_t(\dot B_{2,1}^{\frac{d}{2}+1})}\lesssim \mathcal{X}^2_\varepsilon(t).
\end{align}
Thus, combining \eqref{4.32} and \eqref{4.39} with \eqref{4.38},
we arrive at 
\begin{align}
\|u^\varepsilon\|_{\widetilde L_t^\infty(\dot B_{2,1}^{\frac{d}{2}-1})}^m
+\|u^\varepsilon\|_{ L^1_t(\dot B_{2,1}^{\frac{d}{2}+1})}^m
\lesssim \|u^\varepsilon_0\|_{\dot B_{2,1}^{\frac{d}{2}-1}}^m+\varepsilon^2\|a^\varepsilon\|_{L^1_t(\dot B_{2,1}^{\frac{d}{2}})}^m+\mathcal{X}_{\varepsilon}^2(t).\nonumber
\end{align}
This completes the proof of this lemma.
\end{proof}

Therefore, for the estimate of the medium-frequency, it follows from Lemmas \ref{L4.3} and \ref{L4.4} that
\begin{align}\label{4.39.1}
&\|a^\varepsilon\|_{\widetilde L_t^\infty(\dot B_{2,1}^{\frac{d}{2}})}^m+
\|u^\varepsilon\|_{\widetilde L_t^\infty(\dot B_{2,1}^{\frac{d}{2}-1})}^m
+\varepsilon^2\|a^\varepsilon\|_{ L^1_t(\dot B_{2,1}^{\frac{d}{2}})}^m+
\|u^\varepsilon\|_{ L^1_t(\dot B_{2,1}^{\frac{d}{2}+1})}^m
\lesssim
\|a^\varepsilon_0\|_{\dot B_{2,1}^{\frac{d}{2}}}^m
+\|u^\varepsilon_0\|_{\dot B_{2,1}^{\frac{d}{2}-1}}^m+\mathcal{X}_{\varepsilon}^2(t).
\end{align}

\subsubsection{High-frequency regime.} Inspired by the high-frequency analysis performed for the isentropic compressible Navier--Stokes equations in \cite{Danchin-2018}, 
we introduce the effective velocity 
$\omega^\varepsilon=\nabla(-\Delta)^{-1}(\frac{\varepsilon^2a^\varepsilon}{2\mu+\lambda}
-\dv u^\varepsilon)$, 
and rewrite \eqref{1.3} into the following two systems
\begin{align}\label{4.40}
\partial_{t}\mathbb{P}u^\varepsilon - \mu \Delta \mathbb{P}u^\varepsilon = \mathbb{P}g
\end{align}
where $g:=-u^\varepsilon\cdot\nabla u^\varepsilon
-f(a^\varepsilon)\mathcal{A} u^\varepsilon$, and 
\begin{equation}\label{4.41}
\left\{
\begin{aligned}
&\partial_ta^\varepsilon+
\dv (a^\varepsilon u^\varepsilon)+\frac{\varepsilon^2 a^\varepsilon}{2\mu+\lambda}=-\dv \omega^\varepsilon,\\
&\partial_t \omega^\varepsilon-(2\mu +\lambda )\Delta
\omega^\varepsilon
=\nabla(-\Delta)^{-1}\bigg(-\varepsilon^2\frac{\dv(a^\varepsilon u^\varepsilon)}{2\mu+\lambda}-\dv g\bigg)+\frac{\varepsilon^2 \omega^{\varepsilon}}{2\mu+\lambda}-\frac{\varepsilon^4\nabla(-\Delta)^{-1}a^\varepsilon}{(2\mu+\lambda)^2}.
\end{aligned}
\right.
\end{equation}
It is easy to check that
\begin{align}\label{4.42.1}
&\|u^\varepsilon\|^h_{\widetilde L^\infty_t(\dot B^{\frac{d}{2}-1}_{2,1})}+\|u^\varepsilon\|^h_{L^1_t(\dot B^{\frac{d}{2}+1}_{2,1})} \nonumber\\
\leq&\,\|(\mathbb{P}u^\varepsilon,\omega^\varepsilon)\|^h_{\widetilde L^\infty_t(\dot B^{\frac{d}{2}-1}_{2,1})}+\|(\mathbb{P}u^\varepsilon,\omega^\varepsilon)\|^h_{L^1_t(\dot B^{\frac{d}{2}+1}_{2,1})} + C \left( \|a^\varepsilon\|^h_{\widetilde L_t^\infty(\dot{B}_{2,1}^{\frac{d}{2}})} + \varepsilon^2\|a^\varepsilon\|_{L_t^1(\dot{B}_{2,1}^{\frac{d}{2}})}^{h} \right),
\end{align}
Regarding the incompressible part $\mathbb{P} u^\varepsilon$, taking the same arguments as Lemma \ref{L4.4}, we directly get 
\begin{align}\label{4.43.1}
\|\mathbb{P}u^\varepsilon\|_{\widetilde L_t^\infty(\dot B_{2,1}^{\frac{d}{2}-1})}^h
+\|\mathbb{P}u^\varepsilon\|_{ L^1_t(\dot B_{2,1}^{\frac{d}{2}+1})}^h
\lesssim \|u^\varepsilon_0\|_{\dot B_{2,1}^{\frac{d}{2}-1}}^h+\mathcal{X}_{\varepsilon}^2(t).
\end{align}

We next pay attention to the estimates of $a^\varepsilon$ and  $\omega^\varepsilon$.
\begin{lem}\label{L4.5}
Let $(a^\varepsilon, u^\varepsilon)$ be a strong solution to the Cauchy problem \eqref{1.3}--\eqref{1.4} on $[0,T]\times \mathbb R^d$. Then, under the condition $\eqref{NJKG4.1}$, we have
\begin{align}
\|\omega^\varepsilon\|^h_{\widetilde L^\infty_t(\dot B^{\frac{d}{2}-1}_{2,1})}
+\|\omega^\varepsilon\|^h_{L^1_t(\dot B^{\frac{d}{2}+1}_{2,1})}
+\|a^\varepsilon\|^h_{\widetilde L^\infty_t(\dot B^{\frac{d}{2}}_{2,1})}
+\varepsilon^2\|a^\varepsilon\|^h_{L^1_t(\dot B^{\frac{d}{2}}_{2,1})}
\lesssim \|a^\varepsilon_0\|^h_{\dot B^{\frac{d}{2}}_{2,1}} +\|\omega^\varepsilon_0\|^h_{\dot B^{\frac{d}{2}-1}_{2,1}}
+\mathcal{X}^2_\varepsilon(t).\nonumber
\end{align}
\end{lem}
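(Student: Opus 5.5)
The plan is to treat the two equations of \eqref{4.41} separately in the high-frequency regime $k>J^\varepsilon_1$ and then couple them. The coupling terms carry either the smallness factor $\varepsilon^{2\delta_c}$ or a quadratic factor, so they can be absorbed.

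\textbf{Step 1: the effective velocity.} For $\omega^\varepsilon$, I apply the heat-type maximal regularity of Lemma \ref{Lemma2.6} (with viscosity $2\mu+\lambda$) to \eqref{4.41}$_2$ at regularity $\dot B^{\frac d2-1}_{2,1}$, restricted to $k>J^\varepsilon_1$. This gives
\[
\|\omega^\varepsilon\|^h_{\widetilde L^\infty_t(\dot B^{\frac d2-1}_{2,1})}+\|\omega^\varepsilon\|^h_{L^1_t(\dot B^{\frac d2+1}_{2,1})}
\lesssim \|\omega^\varepsilon_0\|^h_{\dot B^{\frac d2-1}_{2,1}}+\varepsilon^2\|a^\varepsilon u^\varepsilon\|^h_{L^1_t(\dot B^{\frac d2-1}_{2,1})}+\|g\|^h_{L^1_t(\dot B^{\frac d2-1}_{2,1})}+\varepsilon^2\|\omega^\varepsilon\|^h_{L^1_t(\dot B^{\frac d2-1}_{2,1})}+\varepsilon^4\|a^\varepsilon\|^h_{L^1_t(\dot B^{\frac d2-2}_{2,1})}.
\]
By \eqref{1.13.1}$_2$ with $s'=2$, the two linear terms are bounded by $\varepsilon^{2\delta_c}\|\omega^\varepsilon\|^h_{L^1_t(\dot B^{\frac d2+1}_{2,1})}$ and $\varepsilon^{2\delta_c}\,\varepsilon^2\|a^\varepsilon\|^h_{L^1_t(\dot B^{\frac d2}_{2,1})}$ respectively. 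Both therefore carry the small factor $\varepsilon^{2\delta_c}$. The nonlinear term $g$ is handled exactly as in \eqref{4.32} and \eqref{4.39} and gives $\mathcal X_\varepsilon^2$. The term $\varepsilon^2\|a^\varepsilon u^\varepsilon\|_{L^1(\dot B^{\frac d2-1})}$ is bounded by $\varepsilon^2\|a^\varepsilon\|_{L^\infty(\dot B^{\frac d2}_{2,1})}\|u^\varepsilon\|_{L^1(\dot B^{\frac d2-1}_{2,1})}$, which is not directly available. I would instead write it as
\[
\varepsilon\,\|\varepsilon a^\varepsilon\|_{L^2_t(\dot B^{\frac d2}_{2,1})}\|u^\varepsilon\|_{L^2_t(\dot B^{\frac d2}_{2,1})}\lesssim \mathcal X_\varepsilon^2,
\]
as in \eqref{3.14}, losing one power of $\varepsilon\le 1$.

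\textbf{Step 2: the density.} Equation \eqref{4.41}$_1$ is a damped transport equation,
\[
\partial_t a^\varepsilon+u^\varepsilon\cdot\nabla a^\varepsilon+\frac{\varepsilon^2}{2\mu+\lambda}a^\varepsilon=-\dv\omega^\varepsilon-a^\varepsilon\dv u^\varepsilon .
\]
I localize with $\dot\Delta_k$ and perform an $L^2$ energy estimate, writing $[u^\varepsilon\cdot\nabla,\dot\Delta_k]a^\varepsilon$ for the commutator and integrating $\dv u^\varepsilon$ by parts. This yields, for $k>J^\varepsilon_1$,
\[
\|\dot\Delta_k a^\varepsilon\|_{L^\infty_t(L^2)}+\varepsilon^2\|\dot\Delta_k a^\varepsilon\|_{L^1_t(L^2)}\lesssim \|\dot\Delta_k a^\varepsilon_0\|_{L^2}+\|\dot\Delta_k\dv\omega^\varepsilon\|_{L^1_t(L^2)}+\text{(nonlinear terms)}.
\]
Multiplying by $2^{k\frac d2}$ and summing over $k>J^\varepsilon_1$ gives
\[
\|a^\varepsilon\|^h_{\widetilde L^\infty_t(\dot B^{\frac d2}_{2,1})}+\varepsilon^2\|a^\varepsilon\|^h_{L^1_t(\dot B^{\frac d2}_{2,1})}\lesssim \|a^\varepsilon_0\|^h_{\dot B^{\frac d2}_{2,1}}+\|\omega^\varepsilon\|^h_{L^1_t(\dot B^{\frac d2+1}_{2,1})}+\mathcal X_\varepsilon^2 .
\]
The commutator is controlled by Lemma \ref{LA.5} with $s=\frac d2$, and the term $a^\varepsilon\dv u^\varepsilon$ by Lemma \ref{LA.3}. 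Both are bounded by $\|a^\varepsilon\|_{\widetilde L^\infty(\dot B^{\frac d2}_{2,1})}\|u^\varepsilon\|_{L^1(\dot B^{\frac d2+1}_{2,1})}$, with the full norm of $a^\varepsilon$ split into its three frequency pieces as in \eqref{3.16}.

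\textbf{Step 3: coupling, and the main obstacle.} The obstacle is that the $\omega^\varepsilon$-dissipation enters the $a^\varepsilon$ estimate with an $O(1)$ constant, while the $a^\varepsilon$-damping enters the $\omega^\varepsilon$ estimate only through $\varepsilon^{2\delta_c}\cdot\varepsilon^2\|a^\varepsilon\|^h_{L^1}$. I therefore multiply the density inequality by a small constant $\eta_3$ and add it to the $\omega^\varepsilon$ inequality. The term $\eta_3\|\omega^\varepsilon\|^h_{L^1(\dot B^{\frac d2+1})}$ is absorbed by the left side of the $\omega^\varepsilon$ estimate. The term $C\varepsilon^{2\delta_c}\varepsilon^2\|a^\varepsilon\|^h_{L^1}$ is absorbed by $\eta_3\varepsilon^2\|a^\varepsilon\|^h_{L^1}$ once $\varepsilon^{2\delta_c}\ll\eta_3$, and this is where $0<\delta_c<1$ and $\varepsilon<1$ are used. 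For $\varepsilon$ not small, I would instead choose $\delta_c$, or simply the constant relating $J^\varepsilon_1$ to $\varepsilon$, so that $2^{-2J^\varepsilon_1}\varepsilon^2$ is small. Making this absorption uniform in $\varepsilon$, and checking that no hidden $\varepsilon^{-1}$ appears in the nonlinear high-frequency bounds, is the delicate point. With it in place, the lemma follows.
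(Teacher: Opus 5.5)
Your argument is correct and is essentially the paper's proof. You energy-estimate the localized damped transport equation \eqref{4.41}$_1$ using the commutator bound of Lemma~\ref{LA.5}, apply Lemma~\ref{Lemma2.6} to the $\omega^\varepsilon$ equation, and couple the two estimates; the paper multiplies the $\omega^\varepsilon$ bound by a large $M$, which is your $\eta_3=1/M$. The linear terms are then absorbed through the $\varepsilon^{2\delta_c}$ gain in \eqref{4.51}, and, as you rightly flag, this absorption tacitly needs $M\varepsilon^{2\delta_c}$ small, a point the paper passes over.

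Your handling of $\varepsilon^2\|a^\varepsilon u^\varepsilon\|^h_{L^1_t(\dot B^{\frac d2-1}_{2,1})}$ through the $L^2_t$ pairing of \eqref{3.14} is sounder than the paper's \eqref{4.53}. There the bound $\|a^\varepsilon u^\varepsilon\|_{\dot B^{\frac d2}_{2,1}}\lesssim\|a^\varepsilon\|_{\dot B^{\frac d2-1}_{2,1}}\|u^\varepsilon\|_{\dot B^{\frac d2+1}_{2,1}}$ is not covered by Lemma~\ref{LA.3}, and the paraproduct $T_{u^\varepsilon}a^\varepsilon$ obstructs it. One correction to your step: on high frequencies, passing from $\dot B^{\frac d2-1}_{2,1}$ to $\dot B^{\frac d2}_{2,1}$ costs $2^{-J^\varepsilon_1}\approx\varepsilon^{\delta_c-1}$. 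So the prefactor in front of $\|\varepsilon a^\varepsilon\|_{L^2_t(\dot B^{\frac d2}_{2,1})}\|u^\varepsilon\|_{L^2_t(\dot B^{\frac d2}_{2,1})}$ is $\varepsilon^{\delta_c}$ rather than $\varepsilon$, which is harmless.
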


\begin{proof}
Applying the operator $\dot\Delta_k$ to $\eqref{4.41}_1$, we have 
\begin{align}\label{4.42}
&\partial_t\dot\Delta_ka^\varepsilon+u^\varepsilon\cdot\nabla 
\dot\Delta_ka^\varepsilon+
\frac{\varepsilon^2\dot\Delta_ka^\varepsilon}{2\mu+\lambda}
=-\dot\Delta_k (a^\varepsilon\dv u^\varepsilon)- \dv \dot\Delta_{k}\omega^\varepsilon+\dot R_k,
\end{align}
where $\dot R_k=[u^\varepsilon\cdot\nabla,\dot\Delta_k]a^\varepsilon$.
By virtue of Lemma \ref{LA.5}, it holds that
\begin{align}
\|\dot R_k\|_{L^2}\leq Cc_k2^{-k\frac{d}{2}}
\|\nabla u^\varepsilon\|_{\dot B^{\frac{d}{2}}_{2,1}}\|a^\varepsilon\|_{\dot B^{\frac{d}{2}}_{2,1}},
\end{align}
where $\sum_{k\in\mathbb{Z}}c_k=1$.
 Multiplying \eqref{4.42} by $\dot\Delta_ka^\varepsilon$ and integrating by parts over $\mathbb{R}^d$, we have
\begin{align}\label{4.43}
&\|\dot\Delta_ka^\varepsilon\|_{L^2}+\varepsilon^2\|\dot\Delta_ka^\varepsilon\|_{L^1_t(L^2)}\nonumber\\
\lesssim&\, \|\dot\Delta_ka^\varepsilon_0\|_{L^2}
+\int^t_0\Big(\|\dot\Delta_k(a^\varepsilon \dv u^\varepsilon)\|_{L^2}+\|\dot R_k\|_{L^2}+\|\dot\Delta_k \dv \omega^\varepsilon\|_{L^2}+\|\dv u^\varepsilon\|_{L^\infty}
\|\dot\Delta_ka^\varepsilon\|_{L^2}
\Big){\rm d}s.
\end{align}
Furthermore, multiplying \eqref{4.43} by $2^{k\frac{d}{2}}$, and then taking the summation over all $k\geq J^\varepsilon_1$, we deduce that
\begin{align}\label{4.45}
&\|a^\varepsilon\|^h_{\widetilde L^\infty_t(\dot B^{\frac{d}{2}}_{2,1})}
+\varepsilon^2\|a^\varepsilon\|^h_{L^1_t(\dot B^{\frac{d}{2}}_{2,1})}\nonumber\\
\lesssim&\, \|a^\varepsilon_0\|^h_{\dot B^{\frac{d}{2}}_{2,1}}
+\int^t_0\Big(\|a^\varepsilon \dv u^\varepsilon\|^h_{\dot B^{\frac{d}{2}}_{2,1}}+\|\nabla u^\varepsilon\|_{\dot B^{\frac{d}{2}}_{2,1}}\|a^\varepsilon\|_{\dot B^{\frac{d}{2}}_{2,1}}+\| \dv \omega^\varepsilon\|^h_{\dot B^{\frac{d}{2}}_{2,1}}+\|\dv u^\varepsilon\|_{L^\infty}
\|a^\varepsilon\|^h_{\dot B^{\frac{d}{2}}_{2,1}}
\Big){\rm d}s.
\end{align}
For the nonlinear terms in \eqref{4.45}, making use of Lemmas \ref{LA.2} and \ref{LA.3}, respectively, we get
\begin{align}\label{4.46}
\|\dv u^\varepsilon\|_{L^\infty}\lesssim\|\nabla u^\varepsilon\|_{\dot B^{\frac{d}{2}}_{2,1}},
\end{align}
and 
\begin{align}\label{4.47}
\|a^\varepsilon \dv u^\varepsilon\|^h_{\dot B^{\frac{d}{2}}_{2,1}}\lesssim \|a^\varepsilon\|_{\dot B^{\frac{d}{2}}_{2,1}}\|\nabla u^\varepsilon\|_{\dot B^{\frac{d}{2}}_{2,1}}.
\end{align}
Thus, it follows from \eqref{4.45}--\eqref{4.47} that
\begin{align}\label{4.48}
\|a^\varepsilon\|^h_{\widetilde L^\infty_t(\dot B^{\frac{d}{2}}_{2,1})}
+\varepsilon^2\|a^\varepsilon\|^h_{L^1_t(\dot B^{\frac{d}{2}}_{2,1})}
\lesssim \|a^\varepsilon_0\|^h_{\dot B^{\frac{d}{2}}_{2,1}}
+\int^t_0\Big(\|\nabla u^\varepsilon\|_{\dot B^{\frac{d}{2}}_{2,1}}\|a^\varepsilon\|_{\dot B^{\frac{d}{2}}_{2,1}}+\| \dv \omega^\varepsilon\|^h_{\dot B^{\frac{d}{2}}_{2,1}}
\Big){\rm d}s.
\end{align}

In addition, using Lemma \ref{Lemma2.6}, we have 
\begin{align}\label{4.49}
&\|\omega^\varepsilon\|^h_{\widetilde L^\infty_t(\dot B^{\frac{d}{2}-1}_{2,1})}+\|\omega^\varepsilon\|^h_{L^1_t(\dot B^{\frac{d}{2}+1}_{2,1})}
\nonumber\\
\lesssim \,& \|\omega^\varepsilon_0\|^h_{\dot B^{\frac{d}{2}-1}_{2,1}}
+\Big\|\varepsilon^2\frac{\dv(a^\varepsilon u^\varepsilon)}{2\mu+\lambda}+\dv g\Big\|^h_{L^1_t(\dot B^{\frac{d}{2}-2}_{2,1})}+
\Big\|\frac{\varepsilon^2 \omega^\varepsilon}{2\mu+\lambda}-\frac{\varepsilon^4\nabla(-\Delta)^{-1}a^\varepsilon}{(2\mu+\lambda)^2}\Big\|^h_{L^1_t(\dot B^{\frac{d}{2}-1}_{2,1})}.
\end{align}
Multiplying \eqref{4.49} by a large constant $M$, and adding the resulting inequality to \eqref{4.48}, we arrive at the following inequality
\begin{align}\label{4.50}
&M\|\omega^\varepsilon\|^h_{\widetilde L^\infty_t(\dot B^{\frac{d}{2}-1}_{2,1})}
+\frac{M}{2}\|\omega^\varepsilon\|^h_{L^1_t(\dot B^{\frac{d}{2}+1}_{2,1})}
+\|a^\varepsilon\|^h_{\widetilde L^\infty_t(\dot B^{\frac{d}{2}}_{2,1})}
+\varepsilon^2\|a^\varepsilon\|^h_{L^1_t(\dot B^{\frac{d}{2}}_{2,1})}\nonumber\\
\lesssim &\|a^\varepsilon_0\|^h_{\dot B^{\frac{d}{2}}_{2,1}} +M\|\omega^\varepsilon_0\|^h_{\dot B^{\frac{d}{2}-1}_{2,1}}
+M\Big\|\varepsilon^2\frac{\dv(a^\varepsilon u^\varepsilon)}{2\mu+\lambda}+\dv g\Big\|^h_{L^1_t(\dot B^{\frac{d}{2}-2}_{2,1})}\nonumber\\
&+M\Big\|\frac{\varepsilon^2 \omega^\varepsilon}{2\mu+\lambda}-\frac{\varepsilon^4\nabla(-\Delta)^{-1}a^\varepsilon}{(2\mu+\lambda)^2}\Big\|^h_{L^1_t(\dot B^{\frac{d}{2}-1}_{2,1})}
+\int^t_0\|\nabla u^\varepsilon\|_{\dot B^{\frac{d}{2}}_{2,1}}\|a^\varepsilon\|_{\dot B^{\frac{d}{2}}_{2,1}}{\rm d}s
.
\end{align}
Making use of the inequality \eqref{1.13.1}$_2$, 
we deduce that 
\begin{align}\label{4.51}
\Big\|\frac{\varepsilon^2 \omega^\varepsilon}{2\mu+\lambda}-\frac{\varepsilon^4\nabla(-\Delta)^{-1}a^\varepsilon}{(2\mu+\lambda)^2}\Big\|^h_{L^1_t(\dot B^{\frac{d}{2}-1}_{2,1})}
\lesssim \, \varepsilon^{2\delta_c}
\|\omega^\varepsilon\|^h_{L^1_t(\dot B^{\frac{d}{2}+1}_{2,1})}
+\varepsilon^{2+2\delta_c}
\|a^\varepsilon\|^h_{L^1_t(\dot B^{\frac{d}{2}}_{2,1})}
\end{align}
As a result, from \eqref{4.50} and \eqref{4.51}, we derive 
\begin{align}\label{4.52}
&\|\omega^\varepsilon\|^h_{\widetilde L^\infty_t(\dot B^{\frac{d}{2}-1}_{2,1})}
+\|\omega^\varepsilon\|^h_{L^1_t(\dot B^{\frac{d}{2}+1}_{2,1})}
+\|a^\varepsilon\|^h_{\widetilde L^\infty_t(\dot B^{\frac{d}{2}}_{2,1})}
+\varepsilon^2\|a^\varepsilon\|^h_{L^1_t(\dot B^{\frac{d}{2}}_{2,1})}\nonumber\\
\lesssim &\|a^\varepsilon_0\|^h_{\dot B^{\frac{d}{2}}_{2,1}} +\|\omega^\varepsilon_0\|^h_{\dot B^{\frac{d}{2}-1}_{2,1}}
+\varepsilon^2\|a^\varepsilon u^\varepsilon\|^h_{L^1_t(\dot B^{\frac{d}{2}-1}_{2,1})}+\|\dv g\|^h_{L^1_t(\dot B^{\frac{d}{2}-2}_{2,1})}+\int^t_0\|\nabla u^\varepsilon\|_{\dot B^{\frac{d}{2}}_{2,1}}\|a^\varepsilon\|_{\dot B^{\frac{d}{2}}_{2,1}}{\rm d}s
.
\end{align}
By Lemma \ref{LA.3}, \eqref{1.13.2}$_1$ and \eqref{1.13.1}$_2$ directly, we get
\begin{align}\label{4.53}
\varepsilon^2\|a^\varepsilon u^\varepsilon\|^h_{L^1_t(\dot B^{\frac{d}{2}-1}_{2,1})}\lesssim&\, \varepsilon\|a^\varepsilon u^\varepsilon\|^h_{L^1_t(\dot B^{\frac{d}{2}}_{2,1})}
\lesssim \varepsilon\|a^\varepsilon\|_{\widetilde L^\infty_t(\dot B^{\frac{d}{2}-1}_{2,1})}
\|u^\varepsilon\|_{L^1_t(\dot B^{\frac{d}{2}+1}_{2,1})}\nonumber\\
\lesssim&\, (\varepsilon\|a^\varepsilon \|^\ell_{\widetilde L^\infty_t(\dot B^{\frac{d}{2}-1}_{2,1})}+\|a^\varepsilon \|^m_{\widetilde L^\infty_t(\dot B^{\frac{d}{2}}_{2,1})}+\|a^\varepsilon \|^h_{\widetilde L^\infty_t(\dot B^{\frac{d}{2}}_{2,1})})\|u^\varepsilon\|_{L^1_t(\dot B^{\frac{d}{2}+1}_{2,1})}
\lesssim \mathcal{X}^2_\varepsilon(t).
\end{align}
Arguing analogously to \eqref{4.32} and \eqref{4.39}, we have 
\begin{align}\label{4.54}
\|\dv g\|^h_{L^1_t(\dot B^{\frac{d}{2}-2}_{2,1})}\lesssim \mathcal{X}^2_\varepsilon(t).
\end{align}
Consequently, putting \eqref{4.53} and \eqref{4.54} into \eqref{4.52} yields 
\begin{align*}
\|\omega^\varepsilon\|^h_{\widetilde L^\infty_t(\dot B^{\frac{d}{2}-1}_{2,1})}
+\|\omega^\varepsilon\|^h_{L^1_t(\dot B^{\frac{d}{2}+1}_{2,1})}
+\|a^\varepsilon\|^h_{\widetilde L^\infty_t(\dot B^{\frac{d}{2}}_{2,1})}
+\varepsilon^2\|a^\varepsilon\|^h_{L^1_t(\dot B^{\frac{d}{2}}_{2,1})}
\lesssim&\, \|a^\varepsilon_0\|^h_{\dot B^{\frac{d}{2}}_{2,1}} +\|\omega^\varepsilon_0\|^h_{\dot B^{\frac{d}{2}-1}_{2,1}}
+\mathcal{X}^2_\varepsilon(t)\\
\lesssim &\, \|a^\varepsilon_0\|^h_{\dot B^{\frac{d}{2}}_{2,1}} +\|u^\varepsilon_0\|^h_{\dot B^{\frac{d}{2}-1}_{2,1}}
+\mathcal{X}^2_\varepsilon(t).
\end{align*}
This completes the proof of this lemma.
\end{proof}

Combining \eqref{4.42.1}, \eqref{4.43.1} and Lemma \ref{L4.5}, we derive that 
\begin{align}\label{4.57}
\|u^\varepsilon\|^h_{\widetilde L^\infty_t(\dot B^{\frac{d}{2}-1}_{2,1})}
+\|u^\varepsilon\|^h_{L^1_t(\dot B^{\frac{d}{2}+1}_{2,1})}
+\|a^\varepsilon\|^h_{\widetilde L^\infty_t(\dot B^{\frac{d}{2}}_{2,1})}
+\varepsilon^2\|a^\varepsilon\|^h_{L^1_t(\dot B^{\frac{d}{2}}_{2,1})}
\lesssim \|a^\varepsilon_0\|^h_{\dot B^{\frac{d}{2}}_{2,1}} +\|u^\varepsilon_0\|^h_{\dot B^{\frac{d}{2}-1}_{2,1}}
+\mathcal{X}^2_\varepsilon(t).
\end{align}
Thus, we complete the estimates of $(a^\varepsilon,u^\varepsilon)$ in the high-frequency regime.

\subsubsection{Proof of Proposition \ref{prop3.1}}
It follows from Lemma \ref{L4.2}, \eqref{4.39.1} and \eqref{4.57} that there exists a constant $C_0$, independent of $T$ and $\varepsilon$, such that for all $t\in (0,T)$,
\begin{align}\label{4.58}
\mathcal{X}_\varepsilon(t)\leq C_0(\mathcal{X}_{\varepsilon,0}
+\mathcal{X}^2_\varepsilon(t)).
\end{align}
Obviously, for a sufficiently small constant $\eta_0$, when $\mathcal{X}_\varepsilon(t)\leq \eta_0$,
the inequality \eqref{4.58} ensures that 
\begin{align}\label{4.60}
\mathcal{X}_\varepsilon(t)\leq 2C_0\mathcal{X}_{\varepsilon,0}.
\end{align}

\subsection{Proof of Theorem \ref{Th1}}
Let \(T_1\in(0,+\infty]\) be the maximal lifespan of the local strong solution given by Theorem \ref{Th0}. We shall prove that \(T_1 = +\infty\). 
Based on the embedding inequality  \(\dot B^{\frac{d}{2}}_{2,1}\hookrightarrow L^\infty\), we have
\begin{align*}
\|a_0^\varepsilon\|_{L^\infty}\le C_{1}\|a_0^\varepsilon\|_{\dot B^{\frac d2}_{2,1}}. 
\end{align*}
Choose a smaller constant \(\eta_0>0\) in Proposition \ref{prop3.1}, if necessary, such that
\begin{align*}C_{1}\eta_0\leq \frac{1}{2}.    
\end{align*} Then, select \(\delta_0>0\) so that \begin{align*}C_{1}\delta_0\leq \frac{1}{2},\qquad2C_0\delta_0\leq \frac{\eta_0}{2}.    
\end{align*} 
Subsequently, it can be shown that \begin{align*}
\|a_0^\varepsilon\|_{L^\infty}\leq C_{1}\|a_0^\varepsilon\|_{\dot B^{\frac{d}{2}}_{2,1}}\leq C_{1}\mathcal{X}_{\varepsilon,0} \leq C_1 \delta_0 \leq \frac{1}{2},    
\end{align*}  
and thus \(1 + a_0^\varepsilon\geq \frac{1}{2}\).

Define
\begin{align*}
T^{*}:=\sup\big\{T\in(0,T_1):\ \mathcal X_\varepsilon(t)\leq \eta_0 \quad\text{for all }\quad 0\leq t\leq T\big\}.    
\end{align*}
By using the continuity argument of the local solution, we can conclude that \(T^{*}>0\). 
For any \(0 < T < T^{*}\), we have \(\mathcal{X}_\varepsilon(T)\leq \eta_0\). Therefore,
\begin{align*}
\|a^\varepsilon\|_{L^\infty([0,T];L^\infty)}
\le
C_{1}
\|a^\varepsilon\|_{\widetilde L_T^\infty
(\dot B^{\frac d2}_{2,1})}
\leq
C_{1}\mathcal X_\varepsilon(T)
\leq \frac12.    
\end{align*}
Hence, the assumption \eqref{NJKG4.1} of Proposition \ref{prop3.1} is satisfied on \([0,T]\). By applying Proposition \ref{prop3.1}, we obtain the following inequality:
\begin{align*}
\mathcal X_\varepsilon(t)\leq 2C_0\mathcal X_{\varepsilon,0}\leq 2C_0\delta_0\leq\frac{\eta_0}{2},\qquad 0\leq t\leq T,
\end{align*}
Through  \eqref{NJKG3.1} and the standard continuity argument, we can conclude that \(T^{*}=T_1\). Consequently, we have
\begin{align*}
\mathcal X_\varepsilon(t)\leq 2C_0\mathcal X_{\varepsilon,0},\qquad 0\leq t < T_1.
\end{align*}
If \(T_1<+\infty\), then the  estimate contradicts the continuation criterion in Theorem \ref{Th0}. Therefore, \(T_1 = +\infty\). As a result, we obtain that \eqref{NJKG3.3} holds for all $t\geq 0$, which yields \eqref{NJK1.14}. Therefore, we complete the proof of Theorem \ref{Th1}.

\section{High Mach number limit and error estimates}

In this section, we justify the high Mach number limit for the scaled compressible Navier--Stokes system \eqref{1.3}--\eqref{1.4}. The content consists of two parts. In Subsection \ref{AAAA}, we utilize the uniform estimates in Theorem \ref{Th1} to construct the global solution of the pressureless Navier--Stokes system \eqref{1.5}--\eqref{1.5.1} through compactness. In Subsection \ref{BBBB}, we will further establish the finite-time error estimate.
\subsection{Proof of Theorem \ref{Th2}}\label{AAAA}
Let $(a_0, u_0)$ satisfy \eqref{HM2.1} and $\delta_0>0$ be given by \eqref{NJK1.13}. Thanks to the density argument, we select a sequence $\{(a_0^\varepsilon,{u}_0^\varepsilon)\}_{0<\varepsilon<\varepsilon_0}$ (for some $\varepsilon_0>0$) converging to $(a_0,{u}_0)$ in 
$\big(\dot B_{2,1}^{\frac{d}{2}-1}\cap \dot B_{2,1}^{\frac{d}{2}}\big)\times\dot B_{2,1}^{\frac{d}{2}-1}$.
In particular, we take $\varepsilon_0$ small enough such that for all $0<\varepsilon<\varepsilon_0$,
$$
 {\|a_0^\varepsilon - a_0\|_{\dot B_{2,1}^{\frac{d}{2}-1}}}+
\|a_0^\varepsilon - a_0\|_{\dot B_{2,1}^{\frac{d}{2}}} + \|{u}_0^\varepsilon - {u}_0\|_{\dot B_{2,1}^{\frac{d}{2}-1}} < \frac{\delta_0}{2}.
$$
This together with \eqref{HM2.1}, implies that
\begin{align*}
\mathcal X_{\varepsilon,0}
:=\varepsilon\|a_0^\varepsilon\|^\ell_{\dot B^{\frac d2-1}_{2,1}}
+\|a_0^\varepsilon\|^{m+h}_{\dot B^{\frac d2}_{2,1}}
+\| u_0^\varepsilon\|_{\dot B^{\frac d2-1}_{2,1}}
<\frac{\delta_0}{2}+\mathcal{X}_0\leq \frac{\delta_0}{2}+\delta_1 \leq \delta_0,
\end{align*}
provided that we set $\delta_1 \leq  \frac{\delta_0}{2}$.
According to Theorem \ref{Th1}, for any $0 < \varepsilon < \varepsilon_0$, we construct a unique global solution $(a^{\varepsilon}, u^\varepsilon)$ to the Cauchy problem \eqref{1.3}--\eqref{1.4}, which {satisfies the uniform-in-$\varepsilon$ global estimate \eqref{NJK1.14}} 
As a direct consequence of \eqref{NJK1.14}, taking the limit as $\varepsilon\rightarrow 0$, there exists a limit $(a,{u})$.
Up to a subsequence, we have
\begin{equation*}
\left\{
\begin{aligned}
&a^\varepsilon\rightharpoonup  a\,
\qquad\text{weakly-* in}\qquad
L_{\rm loc}^\infty(\mathbb R^+;\dot B^{\frac d2}_{2,1}),\\
& u^\varepsilon\rightharpoonup  u
\qquad\text{weakly-* in}\qquad
L_{\rm loc}^\infty( \mathbb R^+;\dot B^{\frac d2-1}_{2,1}),\\
& u^\varepsilon\rightharpoonup u
\qquad\hbox{weakly in}\qquad\,\,\,\,\,
L_{\rm loc}^1(\mathbb R+;\dot B^{\frac d2+1}_{2,1}).
\end{aligned}
\right.
\end{equation*}

To prove \eqref{NZGG1.18}$_4$--\eqref{NZGG1.18}$_5$, we need to analyze the temporal derivatives of $(a^\varepsilon, u^\varepsilon)$. 
Utilizing \eqref{1.3} and the estimate \eqref{NJK1.14}, we have  that for any $0\leq t\leq T$, 
\begin{align*}
 \| \partial_{t} a^{\varepsilon}\|_{L_t^2(\dot B^{\frac{d}{2}-1}_{2,1})}\lesssim&\,  \|u^\varepsilon\|_{L_t^2(\dot B^{\frac{d}{2}}_{2,1})}+\|a^\varepsilon u^\varepsilon\|_{L_t^2(\dot B^{\frac{d}{2}}_{2,1})} \nonumber\\
 \lesssim&\, \bigg(1+\|a^\varepsilon\|_{\widetilde L_t^\infty(\dot B^{\frac{d}{2}}_{2,1})}\bigg)\|u^\varepsilon\|_{L_t^1(\dot B^{\frac{d}{2}+1}_{2,1})}^\frac{1 }{2} \|u^\varepsilon\|_{\widetilde L_t^{\infty}(\dot B^{\frac{d}{2}-1}_{2,1})}^\frac{1 }{2}\leq C_{T},
\end{align*}
and
\begin{align*}
\|\partial_{t}u^\varepsilon\|_{L_t^1(\dot B_{2,1}^{\frac{d}{2}-1})}\lesssim &\,\bigg(1+\|u^\varepsilon\|_{\widetilde L_t^\infty(\dot B^{\frac{d}{2}-1}_{2,1})}+\|f(a^\varepsilon)\|_{\widetilde L_t^\infty(\dot B_{2,1}^{\frac{d}{2}})} \bigg)    \|u^\varepsilon\|_{L_t^1(\dot B^{\frac{d}{2}+1}_{2,1})}+\varepsilon^2\|a^\varepsilon\|_{L_t^1(\dot B^{\frac{d}{2}}_{2,1})} \nonumber\\
\lesssim&\, \bigg(1+\|u^\varepsilon\|_{\widetilde L_t^\infty(\dot B^{\frac{d}{2}-1}_{2,1})}+\|a^\varepsilon\|_{\widetilde L_t^\infty(\dot B_{2,1}^{\frac{d}{2}})} \bigg)    \|u^\varepsilon\|_{L_t^1(\dot B^{\frac{d}{2}+1}_{2,1})}+{\varepsilon^2
T\|a^\varepsilon\|_{\widetilde L_t^{\infty}( \dot B^{\frac d2}_{2,1})} }
\leq  C_{T},
\end{align*}
for the uniform constant $C_{T}>0$, where $C_{T}$ only depends on $T$. Here, we used the smallness of $\varepsilon$. By the local compact embedding
\begin{align*}
 \dot B^{\frac d2}_{2,1}(\mathbb R^d)
\hookrightarrow H^{\frac d2}_{\rm loc}(\mathbb R^d)
\stackrel{c}{\hookrightarrow} L^2_{\rm loc}(\mathbb R^d),   
\end{align*}
 the Aubin--Lions lemma  and  Cantor's diagonal argument, there exists a limit pair \((a,{u})\) such that, up to a subsequence,
\begin{equation*}
\left\{
\begin{aligned}
&a^\varepsilon\to a\,
\qquad\text{strongly in}\qquad\,\,
\mathcal C_{\rm loc}(\mathbb R^+;L^2_{\rm loc} ),\\
& {u}^\varepsilon\to u
\qquad\text{strongly in}\qquad\,
L_{\rm loc}^2(\mathbb R^+;L^2_{\rm loc} ).
\end{aligned}
\right.
\end{equation*}

Consequently, it is straightforward to verify that the pair $(a, {u})$  is a solution to the pressureless Navier--Stokes system \eqref{1.5}--\eqref{1.5.1}.
Since $a \in L^\infty(\mathbb R^+; \dot B^{\frac d2}_{2,1})$, the condition $\rho=1+a>0$ is preserved due to smallness.
Moreover, the inequality \eqref{HM2.2} can be derived from Fatou’s lemma. Thus,  \eqref{NZGG1.18}--\eqref{HM2.2} are verified. Hence, the proof of Theorem \ref{Th2} is completed.
 \hfill $\Box$

\subsection{Proof of Theorem \ref{Th3}}\label{BBBB}
We now pay attention to establishing the  convergence rate of the high Mach number limit in any {\it finite-time} interval.
We define the differences  as follows:
\begin{align*}
\delta a:=a^\varepsilon-a,
\qquad
\delta{u}:= u^\varepsilon- u,    
\end{align*}
and define the error functional $\delta\mathcal{X}(t)$ by
\begin{align*}
\delta\mathcal X(t)
:=
\|\delta a\|_{\widetilde L_t^\infty(\dot B^{\frac d2-1}_{2,1})}
+
\|\delta u\|_{\widetilde L_t^\infty(\dot B^{\frac d2-2}_{2,1})}+\|\partial_{t}\delta u\|_{L_t^1(\dot B^{\frac{d}{2}-2}_{2,1})}
+
\|\delta u\|_{L_t^1(\dot B^{\frac d2}_{2,1})}.    
\end{align*}
Subtracting \eqref{1.5} from \eqref{1.3} gives
\begin{equation}\label{G5.1}
\left\{
\begin{aligned}
&\partial_t\delta a+{\rm div}\,\delta u
=-\delta a {\rm div}u -a^\varepsilon{\rm div}\, \delta u-u \cdot\nabla \delta a-\delta u\cdot\nabla   a^\varepsilon
\\
&\partial_t\delta u-\mathcal A\delta u+\varepsilon^2\nabla a^\varepsilon
=-\delta u\cdot\nabla u^\varepsilon-u\cdot\nabla\delta u
-(f(a^\varepsilon)-f(a))\mathcal A u 
-f(a^\varepsilon)\mathcal A\delta u,
\\
&(\delta a,\delta u)|_{t=0}
=
(a_0^\varepsilon-a_0, u_0^\varepsilon- u_0),
\end{aligned}
\right.
\end{equation}
where $\mathcal{A}$ and $f$ have the same definitions as in Sections 1 and 3.
It follows from \eqref{G5.1}$_2$ and Lemma \ref{Lemma2.6} that
\begin{align}\label{G5.2}
&\|\delta u\|_{\widetilde L_t^\infty(\dot B^{\frac d2-2}_{2,1})}
+
\|\delta u\|_{L_t^1(\dot B^{\frac d2}_{2,1})}+\|\partial_{t}\delta u\|_{L_t^1(\dot B^{\frac{d}{2}-2}_{2,1})}\nonumber\\
\lesssim&\,
\|u_0^\varepsilon-u_0\|_{\dot B^{\frac d2-2}_{2,1}}
+
\|\delta u\cdot\nabla u^\varepsilon\|_{L_t^1
(\dot B^{\frac d2-2}_{2,1})}
+
\| u\cdot\nabla\delta u\|_{L_t^1
(\dot B^{\frac d2-2}_{2,1})}+
\varepsilon^2
\|\nabla a^\varepsilon\|_{L_t^1(\dot B^{\frac d2-2}_{2,1})}
\nonumber\\
& +
\|f(a^\varepsilon)\mathcal A\delta u\|_{L_t^1
(\dot B^{\frac d2-2}_{2,1})}
+
\|(f(a^\varepsilon)-f(a))\mathcal A u\|_{L_t^1
(\dot B^{\frac d2-2}_{2,1})} \nonumber\\
\lesssim&\, \| u_0^\varepsilon- u_0\|_{\dot B^{\frac d2-2}_{2,1}}+{\|\delta u\|_{\widetilde L_t^\infty(\dot B^{\frac d2-2}_{2,1})}
\| u^\varepsilon\|_{L_t^1(\dot B^{\frac d2+1}_{2,1})}}+\|u\|_{\widetilde L_t^\infty(\dot B^{\frac d2-1}_{2,1})}
\|\delta u\|_{L_t^1(\dot B^{\frac d2}_{2,1})}\nonumber\\
&+\|a^\varepsilon\|_{\widetilde L_t^\infty(\dot B^{\frac d2}_{2,1})}
\|\delta u\|_{L_t^1(\dot B^{\frac d2}_{2,1})}+\|\delta a\|_{\widetilde L_t^\infty(\dot B^{\frac d2-1}_{2,1})}
\| u\|_{L_t^1(\dot B^{\frac d2+1}_{2,1})}+\varepsilon
T\|\varepsilon a^\varepsilon\|_{\widetilde L_t^\infty
(\dot B^{\frac d2-1}_{2,1})}\nonumber\\
\leq&\, C_{T} \big[\varepsilon+(\delta_0+\delta_1)\delta\mathcal{X}(t)\big],
\end{align}
for any $0\leq t\leq T$, where here and in what follows \(C_T>0\) denotes a constant depending on \(T\). 

Moreover, applying Lemma \ref{Lemma2.5} to \eqref{G5.1}$_1$, we have
\begin{align}\label{G5.3}
 \|\delta a\|_{\widetilde L_t^\infty(\dot B^{\frac{d}{2}-1}_{2,1})}
\lesssim&\,
\|a_0^\varepsilon-a_0\|_{\dot B^{\frac{d}{2}-1}_{2,1}}
+
\|\delta u\cdot\nabla a^{\varepsilon}\|_{L_t^1(\dot B^{\frac{d}{2}-1}_{2,1})}+
\|(1+a^\varepsilon){\rm div}\,\delta u\|_{L_t^1
(\dot B^{\frac{d}{2}-1}_{2,1})}
\nonumber\\
&+
\|\delta a\,{\rm div}\, u\|_{L_t^1(\dot B^{\frac{d}{2}-1}_{2,1})}+T^{\frac{1}{2}}
\| u\|_{L_t^2(\dot B^{\frac{d}{2}}_{2,1})}
\|\delta a\|_{\widetilde L_t^\infty(\dot B^{\frac{d}{2}-1}_{2,1})}\nonumber\\
\lesssim&\, \|a_0^\varepsilon-a_0\|_{\dot B^{\frac{d}{2}-1}_{2,1}}+  {T}
\|\delta u\|_{L_t^1(\dot B_{2,1}^{\frac{d}{2}})} \|a^{\varepsilon}\|_{\widetilde L_t^\infty(\dot B_{2,1}^{\frac{d}{2}})}+\|\delta a\|_{\widetilde L_t^\infty(\dot B_{2,1}^{\frac{d}{2}-1})} \|u\|_{L_t^1(\dot B_{2,1}^{\frac{d}{2}+1})}\nonumber\\
&+ \bigg( 1+\|a^{\varepsilon}\|_{\widetilde L_t^\infty(\dot B_{2,1}^{\frac{d}{2}})}\bigg)\|\delta u\|_{L_t^1(\dot B_{2,1}^{\frac{d}{2}})}\nonumber\\
\leq&\,C_{T} \big[\varepsilon+(\delta_0+\delta_1)\delta\mathcal{X}(t)\big],
\end{align}
for any $0\leq t\leq T$.
Combining \eqref{G5.2} with \eqref{G5.3}, we  derive that
\begin{align}\label{G5.4}
\delta\mathcal{X}(t)\leq C_{T} \big[\varepsilon+(\delta_0+\delta_1)\delta\mathcal{X}(t)\big], 
\end{align}
which together with the smallness of $\delta_0$ and $\delta_1$ gives
\begin{align*}
\delta\mathcal{X}(t)\leq C_{T} \varepsilon. 
\end{align*}
Therefore, we complete the proof of Theorem \ref{Th3}.
 \hfill $\Box$

\bigskip 
\noindent{\bf Acknowledgements:} 
Zhang  is supported by NSFC (Grant Nos. 12471215 and 12331007) and Taishan Scholars Program (tsqn202507101).

\vspace{2mm}

\noindent\textbf{Conflict of interest.} The authors do not have any possible conflicts of interest.

\vspace{2mm}

\noindent\textbf{Data availability statement.}
 Data sharing is not applicable to this article as no data sets were generated or analyzed during the current study.

\bibliographystyle{plain}

\end{document}